\documentclass[a4paper,12pt]{amsart}
\usepackage{amsmath,amssymb,amscd,amsfonts}
\usepackage{amsrefs}
\numberwithin{equation}{section}
\newtheorem{theorem}{Theorem}[section]

\newtheorem{rem}[theorem]{Remark}
\newtheorem{defn}[theorem]{Definition}
\newtheorem{lemma}[theorem]{Lemma}

\newtheorem{prop}[theorem]{Proposition}

\newtheorem{corollary}[theorem]{Corollary}

\def\det{\mathop{\rm det}\nolimits}

\def\dbar{\bar\partial}
\def\ddbar{\partial\bar\partial}
\def\d{\partial}

\def\cC{{\mathcal C}}

\def\cD{{\mathcal D}}

\let\ol=\overline

\let\ep=\varepsilon
\let\vp=\varphi

\def\a{\alpha}
\def\z{\zeta}

\def\tgg{\tilde{\Gamma}}
\def\tg{\Gamma}
\def\g{\gamma}
\def\p{\lambda}
\def\q{\nu}
\def\cu{\Upsilon}
\def\cuu{\tilde{\Upsilon}}
\def\a{\alpha}
\def\b{\beta}
\def\cS{{\mathcal S}}

\title[Remark]
{Uniqueness of constant scalar curvature K\"ahler metrics with cone singularities, I: Reductivity}

\author{Long Li}
  \address{Department of Mathematics and Statistics, McMaster University,
1280 Main Street West, Hamilton, ON L8S 4K1, Canada}
  \email[Long Li]{lilong@math.mcmaster.ca}
  
\author{Kai Zheng}
  \address{Mathematics Institute, University of Warwick, Coventry, CV4 7AL, UK}
  \email[Kai Zheng]{K.Zheng@warwick.ac.uk}

%\date{\today}

\begin{document}

\maketitle

\begin{abstract}
The aim of this paper is to investigate uniqueness of conic constant scalar curvature K\"ahler (cscK) metrics, when the cone angle is less than $\pi$.
We introduce a new H\"older space called $\cC^{4,\a,\b}$ to study the regularities of this fourth order elliptic equation,
and prove that any $\cC^{2,\a,\b}$ conic cscK metric is indeed of class $\cC^{4,\a,\b}$. 
Finally, the reductivity is established by a careful study of the conic Lichnerowicz operator. 
\end{abstract}

\section{Introduction}
Bando-Mabuchi \cite{BM} proved that K\"ahler-Einstein metric on Fano manifolds is unique up to holomorphic automorphisms in 1980s. 
Recently, Berndtsson \cite{Bo} generalized this uniqueness result to conic K\"ahler-Einstein metrics on Fano manifolds, by his observation 
on the convexity of certain energy functional (the $Ding$-functional) along geodesics in the space of all K\"ahler potentials. 
And Berndtsson's method is further generalized to prove the uniqueness of singular K\"ahler-Einstein metrics on \emph{klt-paris} \cite{BBEGZ}.

For the uniqueness of constant scalar curvature K\"ahler \emph{(cscK)} metrics \cite{CPZ}, 
the convexity of the so called Mabuchi's functional turns out to be a very important ingredient. 
And this convexity is proved by the work of Berman-Berndtsson \cite{BB} and also differently by Chen-Li-P\u aun \cite{CLP}. 
Moreover, in a recent work of the first author \cite{Li}, the convexity is generalized to conic Mabuchi's functional along any geodesics in the space $\cC^{1,\bar 1}_{\b}$. 
However, according to the work of Calamai-Zheng \cite{CZ}, the existence of such geodesic is only known for small angles ($\beta < 1/2$) up to this stage. 
Therefore, we will first discuss the uniqueness of conic cscK metrics when the cone angle is less than $\pi$ in this paper.  

\begin{defn}
A pair ($\a,\b$) of real numbers is named to be \emph{under the small angle conidtion} if $\a\in (0,1)$ and $\b\in(0,1/2)$ satisfying $\a\b < 1- 2\b$.
\end{defn}

Unlike K\"ahler-Einstein case, there are several different definitions of singular cscK metrics with cone like singularities along a divisor (\cite{Chen}, \cite{Li}, \cite{Has}). 
In the work \cite{Li}, it provided two ways to investigate this problem. 
On the one hand, we can place certain curvature conditions to the geometric equation $(R_{\varphi} = \underline{R})$. 
On the other hand, we can input enough regularities to the Monge-Amp\`ere equation associated to the geometric equation. 
And we will adapt to the latter in this paper. 

Donaldson \cite{Don12} introduced several H\"older spaces ($\cC^{,\a,\b}, \cC^{2,\a,\b}$) to study the regularities of the Laplace operator associated with a conic metric. 
Note that the Laplace operator is exactly the principle part of the linearized operator for K\"ahler-Einstein equation. 
Similarly, the linearized operator for cscK metrics is called the Lichnerowicz operator $\cD$, which is a 4th. order elliptic differential operator. 
In order to study the regularity property of this operator, we introduced a new H\"older space $\cC^{4,\a,\b}$, which is simply defined by one more relation to $u\in\cC^{2,\a,\b}$
\begin{equation}
\label{000}
\Delta_{\Omega_0} u \in \cC^{2,\a,\b}, 
\end{equation}
where $\Omega_0$ is a local model cone metric (Equation \ref{new3}). 

This is a well defined, Banach space equipped with a norm $||\cdot ||_{\cC^{4,\a,\b}}$. 
Very interestingly, we can estimate the growth rates for all 3rd. and 4th. order derivatives of any $u\in\cC^{4,\a,\b}$ from this simple relation (equation (\ref{000})). 
The computation for the 3rd. order derivatives are very similar to Brendle's calculation \cite{Br},
but it is more subtle for the 4th. order derivatives (Riemannian curvatures). And we proved the following regularity theorem. 

\begin{theorem}[Theorem \ref{thm-reg}]
Suppose $u\in\cC^{2,\a,\b}$ is the potential of a conic cscK metric $\omega$. Then $u$ is of class $\cC^{4,\a,\b}$ under the small angle condition. 
\end{theorem}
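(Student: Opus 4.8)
The plan is to decouple the fourth-order constant-scalar-curvature equation into a coupled pair of second-order equations and then bootstrap regularity for each, which is the standard way of unwinding the cscK system. Writing $\omega = \omega_0 + \ddbar u$ and introducing the relative volume $F$ through $\omega^n = e^{F}\omega_0^n$, the identity $\Ricci(\omega) = \Ricci(\omega_0) - \ddbar F$ yields, after tracing with respect to $\omega$ and imposing $R_\omega = \underline R$, the linear equation
\begin{equation}
\Delta_\omega F = \tr_\omega \Ricci(\omega_0) - \underline R .
\end{equation}
Thus the geometric data consist of the complex Monge--Amp\`ere equation $\log\det(g_0+\ddbar u) = F + \log\det g_0$ together with this Laplace-type equation for $F$.

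First I would bootstrap $F$. Since $u\in\cC^{2,\a,\b}$, the coefficients $g_{i\bar j}$ and their inverse $g^{i\bar j}$ lie in $\cC^{\a,\b}$ and are uniformly equivalent to the model metric $\Omega_0$; hence $F\in\cC^{\a,\b}$ and the right-hand side $\tr_\omega\Ricci(\omega_0) - \underline R$ lies in $\cC^{\a,\b}$. The operator $\Delta_\omega$ is then a uniformly conic-elliptic operator with $\cC^{\a,\b}$ coefficients, so by Donaldson's Schauder theory for the model Laplacian together with a freezing-of-coefficients argument I conclude $F\in\cC^{2,\a,\b}$.

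The second and harder step is to boost $u$ to $\cC^{4,\a,\b}$, which by definition only requires $\Delta_{\Omega_0}u\in\cC^{2,\a,\b}$. Set $w := \Delta_{\Omega_0}u$. Applying $\Delta_{\Omega_0}$ to the logarithm of the Monge--Amp\`ere equation and commuting derivatives produces a linear equation of the form
\begin{equation}
\Delta_\omega w = \Delta_{\Omega_0}F + g_0^{a\bar b}g^{i\bar l}g^{k\bar j}(\partial_a g_{k\bar l})(\partial_{\bar b} g_{i\bar j}) + E,
\end{equation}
where $E$ collects background-curvature terms built from $g_0$ and lower-order commutator corrections. The term $\Delta_{\Omega_0}F$ already belongs to $\cC^{\a,\b}$ by the previous step, and $E$ is controlled by the fixed geometry. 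Once I show the entire right-hand side lies in $\cC^{\a,\b}$, the conic Schauder estimate for $\Delta_\omega$ gives $w\in\cC^{2,\a,\b}$, that is $u\in\cC^{4,\a,\b}$.

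The main obstacle is precisely the gradient-squared term $g_0^{a\bar b}g^{i\bar l}g^{k\bar j}(\partial_a g_{k\bar l})(\partial_{\bar b}g_{i\bar j})$. Its factors are first derivatives of the metric, i.e.\ third derivatives of $u$, which need not be bounded near the divisor even though $u\in\cC^{2,\a,\b}$; each individual factor blows up at a controlled rate. The point is that the transverse weight supplied by $g_0^{a\bar b}$ must compensate this blow-up so that the product, and in fact its $\cC^{\a,\b}$ H\"older seminorm, stays finite. This is exactly where the growth-rate estimates for third-order derivatives (in the spirit of Brendle's computation) are needed, and where the \emph{small angle condition} $\a\b < 1-2\b$ enters: it keeps the resulting weighted exponents inside the admissible range and away from the indicial roots of the model operator, so that the nonlinear products remain conic-H\"older. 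Establishing this membership, rather than the two Schauder applications, is the technical heart of the argument.
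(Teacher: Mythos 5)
Your overall strategy --- bootstrap $F$ through the traced equation first, then control $\Delta_{\Omega_0}u$ by differentiating the Monge--Amp\`ere equation twice, with the gradient-squared third-order terms identified as the crux --- is the same as the paper's. But as written there are two genuine gaps. The smaller one is normalization: you define $F$ by $\omega^n = e^F\omega_0^n$ against the \emph{smooth} background $\omega_0$, so that $e^F\sim|\z|^{2\b-2}$ near $D$, $F$ is unbounded, and your first Schauder step cannot produce $F\in\cC^{2,\a,\b}$. The paper instead normalizes against the model conic metric $\Omega=\omega_0+\la dd^c|s|^{2\b}_h$ (equivalently, inserts the factor $|s|^{2-2\b}_h$ into the volume ratio, as in the definition of the Ricci potential), so that $tr_{\omega}Ric(\Omega)$ is conic-H\"older (Corollary \ref{cor-ric}) and $F\in\cC^{2,\a,\b}$ does follow from Donaldson's estimate.

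The serious gap is that you label the cone-admissibility of the products $\d g\ast\bar\d g$ as ``the technical heart'' and then do not supply it. Knowing only $u\in\cC^{2,\a,\b}$ gives no pointwise control on third derivatives of $u$, and Brendle's appendix by itself only converts a bound on the Laplacian of a quantity into growth rates for its derivatives --- it does not generate the needed input from nothing. The missing mechanism, which occupies most of the paper's argument (Lemmas \ref{lem-3-2}, \ref{lem-3tang-2}, \ref{lem-3normal-2}), is to differentiate the Monge--Amp\`ere equation \emph{once} before differentiating it twice: tangentially one gets $\Delta_\omega(\d_k u)= tr_{\Omega}(\d_k\Omega)+\d_kF-tr_\omega(\d_k\Omega)\in\cC^{,\a,\b}$, whence $\d_ku\in\cC^{2,\a,\b}$ by Donaldson's estimate and the mixed third derivatives acquire the model growth; in the normal direction the traced identity $g^{\bar\b\a}(D^3u)(\d_\z,\d_{\bar\b},\d_\a)=\d_\z F$ lets one solve for the worst component and deduce $\rho^{3-3\b}(D^3u)(\d_\z,\d_{\bar\z},\d_\z)\in\cC^{,\a,\b}_0$, because the good components are already controlled and $g^{\bar\z\z}\sim\rho^{2-2\b}$. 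Only after this intermediate step are the gradient-squared terms in your equation for $w=\Delta_{\Omega_0}u$ cone admissible (the paper runs this final step componentwise through the Ricci identity $R_{k\bar l}=\Delta_\omega g_{k\bar l}-g^{\bar\b\a}g^{\bar\nu\mu}\d_kg_{\a\bar\nu}\d_{\bar l}g_{\mu\bar\b}$ and its weighted normal analogue rather than through the scalar trace, but that difference is cosmetic). Without the once-differentiated bootstrap the final Schauder application does not close.
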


This is basically done by Donaldson's estimates and Brendle's trick \cite{Br}. 
Thanks to these two important tools, we also proved a rough Schauder estimate for $\cD$ operator induced by a conic cscK metric. 
And this gives a chance to lift the regularities of potentials which are in the kernel of $\cD$ operator. 
Finally, we generalized the Lichnerowicz-Calabi theorem to $\cC^{2,\a,\b}$ conic cscK metrics (Definition \ref{def-new3}). 

Let $\mathfrak{h}(X;D)$ be the space of all holomorphic vector fields tangential to the divisor $D$, and there is a one-one correspondence between 
$\mathfrak{h}(X;D)$ and the automorphism group $Aut(X;D)$ (all automorphism of $X$ fixing $D$). 

\begin{theorem} [Theorem \ref{thm-reductive}]
\label{thm-main}
Suppose $\omega$ is a $\cC^{2,\a,\b}$ conic cscK metric on $X$ under the small angle condition. 
Then the Lie algebra $\mathfrak{h} (X;D)$ has a semidirect sum decomposition:
\begin{equation}
\mathfrak{h}(X;D) = \mathfrak{a}(X;D) \oplus \mathfrak{h'} (X;D),
\end{equation}
where $\mathfrak{a}(X;D)$ is the complex Lie subalgebra of $\mathfrak{h}(X;D)$ consisting of all parallel holomorphic vector fields tangential to $D$,
and $\mathfrak{h'}(X;D)$ is an ideal of $\mathfrak{h}(X;D)$ consisting of the image under $grad_g$ of the kernel of $\cD$ operator.

Furthermore $\mathfrak{h}'(X;D)$ is the complexification of a Lie algebra consisting of Killing vector fields of $X$ tangential to $D$. 
In particular $\mathfrak{h}'(X;D)$ is reductive. 
\end{theorem}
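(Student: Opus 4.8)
The plan is to carry the Lichnerowicz--Calabi scheme into the conic category, with the regularity theorem and the rough Schauder estimate for $\cD$ supplying the analysis that replaces the usual elliptic theory. The pivot of the whole argument is that, on a cscK metric, the fourth order operator $\cD$ is \emph{real}: its leading part is $\tfrac14\Delta^2$ and its lower order terms are of the schematic form $\Ricci^{i\bar j}\partial_i\partial_{\bar j}f+\langle\partial R_\omega,\partial f\rangle$, so the constancy $R_\omega=\underline R$ annihilates the first order piece and leaves an operator compatible with conjugation, $\overline{\cD f}=\cD\bar f$. Hence $\ker\cD$ is finite dimensional (Fredholmness), its elements are of class $\cC^{4,\a,\b}$ (regularity lifting), and it is invariant under complex conjugation. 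I would take $\mathfrak h'(X;D):=\mathrm{grad}_g(\ker\cD)$ as the space of holomorphy potentials.

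The reductivity of $\mathfrak h'(X;D)$---the ``in particular'' of the statement---then comes out almost formally. For a \emph{real} $h\in\ker\cD$ the field $\mathrm{grad}_g h$ is holomorphic and its imaginary part, the Hamiltonian field of $h$, is a Killing field tangential to $D$; conversely every such Killing field arises this way. Using the conjugation invariance I would split an arbitrary $f\in\ker\cD$ as $f=h_1+ih_2$ with $h_1,h_2\in\ker\cD$ real, which exhibits $\mathfrak h'(X;D)$ as the complexification of the real Lie algebra $\mathfrak k$ of these Killing fields. Because $X$ is compact, $\mathfrak k$ is the Lie algebra of a compact subgroup of the isometry group of the conic metric, hence reductive, and the complexification of a reductive Lie algebra is reductive.

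For the decomposition itself I would use that a holomorphic $V$ tangential to $D$ is a holomorphy potential precisely when its metric dual $(0,1)$-form $\iota_V\omega$ (which is $\dbar$-closed) is $\dbar$-exact, i.e. when its class in the appropriate conic Dolbeault cohomology vanishes; thus $\mathfrak h'(X;D)=\ker\big(\mathfrak h(X;D)\to H^{0,1}\big)$. Since $V\mapsto[\iota_V\omega]$ is a homomorphism into an abelian Lie algebra, the commutator subalgebra $[\mathfrak h,\mathfrak h]$ lands in $\mathfrak h'(X;D)$, so $\mathfrak h'(X;D)$ is an ideal. To produce the complementary summand I would Hodge-decompose $\iota_V\omega=\mathcal H+\dbar f$ and invoke a Bochner identity---here the cscK condition enters decisively---to show that the harmonic remainder dualizes to a \emph{parallel} holomorphic field; this field lies in $\mathfrak a(X;D)$, and since $\mathfrak a(X;D)\cap\mathfrak h'(X;D)=0$ (a form that is both harmonic and exact vanishes) and parallel fields commute, the sum $\mathfrak a(X;D)\oplus\mathfrak h'(X;D)$ is a semidirect decomposition.

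The main obstacle is not algebraic but analytic, and it is concentrated in the places where the above manipulations are integrations by parts on the singular metric: the self-adjointness identity $\int\langle\cD f,f\rangle=\int|\dbar\,\mathrm{grad}_g f|^2$ that turns $\ker\cD$ into holomorphy potentials, the Bochner identity that makes the harmonic part parallel, and the very construction of the Hodge projection in the conic setting. In each case one must show that the boundary integrals over the tubes $\{\mathrm{dist}(\cdot,D)=\ep\}$ tend to zero as $\ep\to0$. This is exactly where the class $\cC^{4,\a,\b}$ is indispensable: the growth estimates for the third and fourth order derivatives, combined with the small angle condition $\a\b<1-2\b$ and $\b<1/2$, are what force the weighted volume of these tubes to decay fast enough to kill every boundary contribution while keeping the relevant weighted norms finite. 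Securing these vanishing statements is the delicate heart of the proof; granting them, the Lie-algebraic conclusions above follow from the constancy of the scalar curvature.
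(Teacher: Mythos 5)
Your proposal is correct and follows essentially the same route as the paper: Hodge-decompose the dual $(0,1)$-form $\iota_Y\omega=\gamma+\dbar u$, use the cscK condition ($\nabla R=0$) to place $u$ in $\ker\cD$ and make $\uparrow^\omega\dbar u$ holomorphic, observe that the harmonic remainder is parallel, show $[\mathfrak{h},\mathfrak{h}]\subset\mathfrak{h}'$, and deduce reductivity from the reality of $\cD$ via the associated Killing fields. You also correctly locate the genuine analytic content exactly where the paper does --- in the vanishing of the boundary terms in the integration-by-parts formula for $\cD$ and in the conic Hodge/regularity theory, both controlled by the $\cC^{4,\a,\b}$ growth estimates under the small angle condition.
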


The theorem is proved by the following decomposition
\begin{equation}
\label{0000}
Y = Y^{//} \oplus Y^{\perp}
\end{equation}
where $Y$ is any holomorphic vector field on $X$ tangential to the divisor $D$,
$Y^{//}$ is a parallel holomorphic vector field tangential to $D$, 
and $Y^{\perp}$ is the image under $grad_g$ of some function $u\in \ker\cD$. 
This is done by proving a kind of Hodge decomposition to the lifting of $Y$ first, 
and then an integration by parts formula for the conic Lichnerowicz operator establishes the one-one correspondence. 

Moreover, notice that the conic Lichnerowicz operator $\cD$ is still a real operator for conic cscK metrics. And this enable us to conclude the following reductivity. 

\begin{corollary}
\label{thm-main1}
Suppose a compact complex K\"ahler manifold $X$ admits a $\cC^{2,\a,\b}$ conic cscK metrics under the small angle condition. 
Then the automorphism group $Aut(X,D)$ is reductive. 
\end{corollary}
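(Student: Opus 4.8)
The plan is to deduce reductivity of the group $\Aut(X;D)$ from the Lie-algebra decomposition of Theorem~\ref{thm-main}, using the standard fact that a connected complex Lie group is reductive exactly when its Lie algebra is reductive as a complex Lie algebra, i.e.\ when it is the complexification of the Lie algebra of a compact group (equivalently, when it is the direct sum of its center with a semisimple ideal). By the one-one correspondence between $\mathfrak{h}(X;D)$ and $\Aut(X;D)$ recalled before Theorem~\ref{thm-main}, it therefore suffices to prove that the complex Lie algebra $\mathfrak{h}(X;D)$ is reductive.

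For this I would read off the two summands in the decomposition of Theorem~\ref{thm-main}. The ideal $\mathfrak{h}'(X;D)$ is, by that theorem, the complexification of the real Lie algebra of Killing fields tangential to $D$; this is precisely where the reality of the conic Lichnerowicz operator $\cD$ enters, since it endows $\ker\cD$ with a real structure (if $u\in\ker\cD$ then $\bar u\in\ker\cD$) and hence supplies a compact real form for $\mathfrak{h}'(X;D)$. In particular $\mathfrak{h}'(X;D)$ is reductive. The other summand $\mathfrak{a}(X;D)$ consists of parallel holomorphic vector fields, so for $Y,Z\in\mathfrak{a}(X;D)$ one has $[Y,Z]=\nabla_Y Z-\nabla_Z Y=0$; thus $\mathfrak{a}(X;D)$ is abelian, and its real parallel fields, being Killing with flows closing up to a torus in the (compact) isometry group, generate a compact torus, so $\mathfrak{a}(X;D)$ is itself the complexification of a compact (toral) real Lie algebra.

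It then remains to glue the two factors into a single reductive structure, and this is the one step that genuinely requires an argument rather than bookkeeping: a semidirect sum of an abelian subalgebra with a reductive ideal need not be reductive (the affine algebra $\bR\ltimes\bR$ is the standard warning), so I must verify that $\mathfrak{a}(X;D)$ is in fact \emph{central}, i.e.\ $[\mathfrak{a}(X;D),\mathfrak{h}'(X;D)]=0$. Because the fields in $\mathfrak{a}(X;D)$ are covariantly constant, the bracket of such a $Y$ with any $Z\in\mathfrak{h}'(X;D)$ collapses to $\nabla_Y Z$, which is annihilated by the parallel (de Rham) splitting of the tangent bundle determined by $\mathfrak{a}(X;D)$, the complexified isometries in $\mathfrak{h}'(X;D)$ preserving that flat factor. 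Hence the decomposition of Theorem~\ref{thm-main} is genuinely a direct sum of Lie algebras with central abelian factor, and $\mathfrak{h}(X;D)$ is the complexification of the compact Lie algebra obtained by summing the toral real form of $\mathfrak{a}(X;D)$ with the Killing real form of $\mathfrak{h}'(X;D)$; it is therefore reductive, and by the correspondence above $\Aut(X;D)$ is reductive. I expect the centrality of the parallel part---ensuring the de Rham-type splitting and the invariance of the flat factor go through in the conic, singular setting where everything must be controlled near $D$---to be the main obstacle; the rest is formal once Theorem~\ref{thm-main} and the reality of $\cD$ are in hand.
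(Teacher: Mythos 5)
Your reduction to the Lie-algebra statement and your use of the decomposition from Theorem \ref{thm-main} match what the paper intends (the paper in fact offers no separate proof of the corollary: it treats it as immediate from Theorem \ref{thm-reductive} together with the reality of $\cD$). You are also right to flag the one genuinely non-formal point, namely that a semidirect sum of an abelian subalgebra with a reductive ideal need not be reductive. But your proposed repair is where the argument breaks down. You assert $[\mathfrak{a}(X;D),\mathfrak{h}'(X;D)]=0$ on the grounds that $\nabla_Y Z$ ``is annihilated by the parallel (de Rham) splitting''; as written this is not a proof --- nothing in the paper or in your sketch establishes a de Rham--type holonomy splitting for the singular conic metric, nor that a gradient field $Z=\uparrow^{\omega}\dbar u$ has vanishing covariant derivative in the parallel directions. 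You label this step ``the main obstacle,'' which is an accurate self-diagnosis: the centrality claim is left unproved, so the proof is incomplete as it stands.

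Moreover, centrality of $\mathfrak{a}(X;D)$ is not the intended mechanism and is not needed. The standard Matsushima--Calabi route, which is what the paper's remark ``the reductivity follows since $\cD$ is a real operator'' points at, is to produce a compact real form of all of $\mathfrak{h}(X;D)$ at once: since $\cD$ is real, $\ker\cD$ is spanned by real-valued potentials, and each real $u\in\ker\cD$ gives rise to a (Hamiltonian) Killing field tangential to $D$; likewise a parallel holomorphic field $Y$ equals $\xi-iJ\xi$ with $\xi=2\Re Y$ real parallel, hence Killing. Setting $\mathfrak{k}:=\mathfrak{a}_{\mathbb{R}}\oplus\mathfrak{k}_0$ (real parallel fields plus Hamiltonian Killing fields), the ideal property $[\mathfrak{h},\mathfrak{h}]\subset\mathfrak{h}'$ from equation (\ref{comm2}) shows $\mathfrak{k}$ is a real Lie subalgebra of the algebra of holomorphic isometries of $(X,\omega)$ preserving $D$, and Theorem \ref{thm-main} gives $\mathfrak{h}(X;D)=\mathfrak{k}\oplus J\mathfrak{k}=\mathfrak{k}^{\mathbb{C}}$. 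The holomorphic isometry group of the conic metric is compact (the metric completion is a compact metric space), so $\mathfrak{k}$ is a compact Lie algebra and its complexification is reductive, whence $\Aut(X;D)$ is reductive. This bypasses the splitting question entirely; if you insist on your architecture you must actually prove $[\mathfrak{a},\mathfrak{h}']=0$, which in the conic setting is a harder statement than the corollary itself.
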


In the sequel paper \cite{LZ2}, we establish a linear theory for the Lichnerowicz operator, 
which implies that the operator $\cD: \cC^{4,\a,\b}\rightarrow \cC^{,\a,\b}$ has closed range.
And this enable us to generalize the Bifurcation technique \cite{CPZ} to conic cscK metrics. 

$\mathbf{Acknowledgement}$: We are very grateful to Prof. Xiuxiong Chen, for his continuous encouragement in mathematics, 
and we would like to thank Prof. S. Donaldson who suggested this problem to us. 
We also want to thank Dr. Chengjian Yao, Dr. Yu Zeng, and Dr. Yuanqi Wang for lots of useful discussions. 

The first author wants to thank Prof. I. Hambleton and Prof. M. Wang for many supports. 
The second author was supported by the EPSRC on a Programme Grant entitled "Singularities of Geometric Partial Differential Equations" reference number EP/K00865X/1.

\section{ A new H\"older space}
Let $(X, \omega_0)$ be a compact K\"ahler manifold with complex dimension $n$, and $D$ is a simple smooth divisor on $X$.
Suppose $(L_D, h)$ is the line bundle induced by the divisor $D$ with a metric $h$,
and $s$ is a non-trivial holomorphic section of it.

For arbitrary point $p\in D$, we can introduce a local coordinate chart $(\zeta, z_2,\cdots, z_n)$ in an open neighborhood $U$ centered at $p$, 
such that the divisor is defined by the zero locus of $\zeta$ in $U$. And we call it as the \emph{z-coordinate}.
Writing $\zeta = \rho e^{i\theta}$ in polar coordinate, we can introduce another 
local coordinate as $(\xi, w_2,\cdots, w_n) $, where 
$$\xi: = r e^{i\theta}, \ \ \ r = \rho^{\beta},$$
and $w_j:= z_j$ for all $j>1$. And we call it as the \emph{w-coordinate} (Not holomorphic!). 

In $z$-coordinate, we denote $\Omega_0$ by the \emph{local model cone metric} 
$$\Omega_0 = \b^{2}|\z|^{2\b-2} id\z\wedge d\bar\z + \sum_{j>1} idz_j \wedge d\bar z_j.$$
Then a conic K\"ahler metric $\omega$ is defined as a positive closed $(1,1)$ current on X, smooth on $X-D$, such that 
it is quasi-isometric to $\Omega_0$ near each point $p$ on $D$. 
That is to say, on a open neighborhood $U$ of any $p\in D$, we have 
$$ C^{-1} \Omega_0 \leq \omega \leq C\Omega_0, $$
for some constant $C>0$.

\subsection{Higher order H\"older spaces}
From now on, we adapt to the following conventions:

\begin{enumerate}
\smallskip
\item[(a)] Greek letters like $\a, \b, \mu, \nu,\cdots $ represent all indexes $\z, 2,\cdots, n$;

\item[(b)] English letters like $k, l, i, j, \cdots$ only reprent indexes $2,\cdots, n$;

\item[(c)] $\d$ and $\dbar$ denotes the ordinary partial differential operator;

\item[(d)] The operator $\nabla $ denotes the Chern connection with respect to certain K\" ahler metric,
and we also use $u_{,\a\bar\b\g}$ to denote the acting of the connection to the function $u$ in $\a, \bar\b, \g$ directions

\end{enumerate}

According to Donaldson \cite{Don12}, we denote $\mathcal{C}^{,\a,\b}$ by the space of all real valued functions $f$ of the form $f(\zeta, z_2,\cdots, z_n) = \tilde{f}(\xi, w_2,\cdots, w_n)$ where $\tilde{f}\in C^{\a}$.  And $\mathcal{C}^{,\a,\b}_0$ denotes the space of all functions $f\in \mathcal{C}^{,\a,\b}$ such that $f(0, z_2,\cdots, z_n) =0$. 
A $(1,0)$ form 
$$\tau: = \tau_{\z} d\z + \sum_{k>1} \tau_k dz_k $$
 in $z$-coordinate is said to be of class $\mathcal{C}^{,\a,\b}$ if its coefficients satisfy 
$$\rho^{1-\beta}\tau_{\z} \in \cC^{,\a,\b}_0; \ \ \ \tau_k \in \cC^{,\a,\b}.$$
And a $(0,1)$ form $\tau'$ is of class $\cC^{,\a,\b}$ if $\ol{\tau'}\in \cC^{,\a,\b}$. 
Moreover, a $(1,1)$- form $$\sigma: = \sigma_{\z\bar{\z}} d\z\wedge d\bar{\z} + \sum_{k>1} \sigma_{\z\bar{l}} d\z\wedge d\bar{z}_l 
+ \sum_{k>1} \sigma_{k\bar{\z}} dz_k\wedge d\bar{\z} + \sum_{k, l>1} \sigma_{k\bar{l}} dz_k\wedge d\bar{z}_l$$
is said to be of class $\cC^{,\a,\b}$ if  
$$ \sigma_{k\bar{l}} \in \cC^{,\a,\b};\  \rho^{1-\b}\sigma_{\z\bar{l}}, \rho^{1-\b} \sigma_{k\bar{\z}} \in \cC^{,\a,\b}_0; \  \rho^{2-2\b} \sigma_{\z\bar{\z}}\in \cC^{,\a,\b}.$$
Donaldson \cite{Don12} also introduced the following H\"older space of the second order as 
$$ \cC^{2,\a,\b}: = \{ u\in C^{2}_{\mathbb{R}}(X-D)|\ \  u, \d u, \ddbar u \in \cC^{,\a,\b} \}.$$  
We call a complex valued function $f$ is of class $\cC^{,\a,\b}$ $(\cC^{2,\a,\b})$, 
if both the real part and imaginary part of $f$ are of class $\cC^{,\a,\b}$ ($\cC^{2,\a,\b}$).

Moreover, we can consider a new H\"older space of the 4th. order as 
\begin{defn}
$$ \cC^{4,\a,\b}: = \{ u\in C^{2,\a,\b}\cap C_{\mathbb{R}}^4(X-D) |\ \ \rho^{2-2\b} \d_\z \d_{\bar\z} u\in\cC^{2,\a,\b},\  \d_k \d_{\bar l} u\in \cC^{2,\a,\b} \}.$$
\end{defn}
Notice that the operator $i\ddbar$ is real, and the definition makes sense even when $u$ is complex valued.
\begin{rem}
According to Brendle's computation (Appendix A.1 \cite{Br}), the small angle condition gives us a slightly better growth estimate for $\cC^{2,\a,\b}$ functions.
In fact, if $u\in \cC^{2,\a,\b}$, then we have 
$$  \rho^{1-2\b}(\d u)_{\z} \in \cC^{,\a,\b}.  $$
\end{rem}

Recall that in $z$-coordinate, we denote $\Omega_0$ by the local model cone metric 
\begin{equation}
\label{new3}
\Omega_0 = \b^{2}|\z|^{2\b-2} id\z\wedge d\bar\z + \sum_{j>1} idz_j \wedge d\bar z_j.
\end{equation}
Thanks to Donaldson's estimate, we have another description of the H\"older space $\cC^{4,\a,\b}$.
\begin{prop}
We have the following equivalence 
\label{prop-C4}
$$ \cC^{4,\a,\b} = \{ u\in\cC^{2,\a,\b}\cap C_{\mathbb{R}}^4(X-D) |\ \ \Delta_{\Omega_0} u\in \cC^{2,\a,\b}\}.$$
\end{prop}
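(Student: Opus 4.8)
The plan is to reduce the equivalence to Donaldson's linear Schauder estimate for the model operator, after writing $\Delta_{\Omega_0}$ out explicitly. Inverting the model metric \eqref{new3} gives $g^{\z\ba\z}=\b^{-2}\rho^{2-2\b}$ and $g^{k\ba l}=\delta_{kl}$ for $k,l>1$, so that for any $u\in C^{4}(X-D)$
\begin{equation}
\label{eq-lap-form}
\Delta_{\Omega_0} u = \frac{1}{\b^2}\,\rho^{2-2\b}\,\d_\z\d_{\ba\z} u + \sum_{k>1}\d_k\d_{\ba k} u .
\end{equation}
From this the inclusion $\cC^{4,\a,\b}\subseteq\{u:\Delta_{\Omega_0}u\in\cC^{2,\a,\b}\}$ is immediate: if $\rho^{2-2\b}\d_\z\d_{\ba\z}u\in\cC^{2,\a,\b}$ and $\d_k\d_{\ba l}u\in\cC^{2,\a,\b}$, then the right-hand side of \eqref{eq-lap-form} is a real linear combination of elements of $\cC^{2,\a,\b}$, and since $\cC^{2,\a,\b}$ is a vector space we conclude $\Delta_{\Omega_0}u\in\cC^{2,\a,\b}$.

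For the reverse inclusion I would set $w:=\Delta_{\Omega_0}u\in\cC^{2,\a,\b}$. The key structural point is that in \eqref{eq-lap-form} the only non-constant coefficient, $\rho^{2-2\b}$, depends on $\z$ alone; hence on $X-D$ the operator $\Delta_{\Omega_0}$ commutes with the flat derivatives $\d_k$ and $\d_{\ba l}$ ($k,l>1$). First I would differentiate the identity $\Delta_{\Omega_0}u=w$ in these directions to obtain, for every $k,l>1$,
\begin{equation}
\label{eq-commute}
\Delta_{\Omega_0}\big(\d_k\d_{\ba l}u\big)=\d_k\d_{\ba l}w \qquad \text{on } X-D .
\end{equation}
Because $w\in\cC^{2,\a,\b}$, the form $\ddbar w$ lies in $\cC^{,\a,\b}$, so its $(k\ba l)$-component $\d_k\d_{\ba l}w$ is of class $\cC^{,\a,\b}$; and because $u\in\cC^{2,\a,\b}$, the $(k\ba l)$-component $v:=\d_k\d_{\ba l}u$ of $\ddbar u$ already lies in $\cC^{,\a,\b}\cap C^{2}(X-D)$.

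I would then apply Donaldson's Schauder estimate \cite{Don12} for the model Laplacian to $v$: since $v\in\cC^{,\a,\b}$ satisfies $\Delta_{\Omega_0}v\in\cC^{,\a,\b}$ by \eqref{eq-commute}, the estimate upgrades $v$ to $\cC^{2,\a,\b}$, giving $\d_k\d_{\ba l}u\in\cC^{2,\a,\b}$ for all $k,l>1$ (the complex-valued case being handled by applying the estimate to $\Re v$ and $\Im v$ separately, which is legitimate since $\Delta_{\Omega_0}$ is a real operator). This already secures the second defining condition of $\cC^{4,\a,\b}$. The first condition then follows algebraically by solving \eqref{eq-lap-form} for the conic term,
\begin{equation}
\label{eq-solve}
\rho^{2-2\b}\,\d_\z\d_{\ba\z}u=\b^2\Big(w-\sum_{k>1}\d_k\d_{\ba k}u\Big),
\end{equation}
whose right-hand side is now a real linear combination of elements of $\cC^{2,\a,\b}$, hence in $\cC^{2,\a,\b}$. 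Thus $u\in\cC^{4,\a,\b}$.

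The main obstacle I anticipate is the rigorous invocation of Donaldson's estimate between \eqref{eq-commute} and \eqref{eq-solve}, rather than the algebraic bookkeeping. One must ensure that the formal commutation of $\d_k,\d_{\ba l}$ with $\Delta_{\Omega_0}$ on $X-D$ is compatible with the behaviour across $D$ encoded in the $\cC^{,\a,\b}$-norms, i.e. that the a priori membership $v\in\cC^{,\a,\b}$ together with $\Delta_{\Omega_0}v\in\cC^{,\a,\b}$ genuinely falls within the hypotheses of the model Schauder estimate, so that no distributional contribution supported on $D$ is created or lost when differentiating. Verifying that $v$ carries enough interior regularity for \eqref{eq-commute} to hold classically on $X-D$, and that the weighted Hölder structure is preserved under the two \emph{good} derivatives $\d_k,\d_{\ba l}$, is where the real care is needed; once this is in place the proposition follows.
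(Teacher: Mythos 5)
Your proof is correct and takes essentially the same route as the paper: the key step in both is to commute the tangential derivatives $\d_k\d_{\bar l}$ with $\Delta_{\Omega_0}$ (legitimate because the only non-constant coefficient depends on $\z$ alone) and then invoke Donaldson's Schauder estimate to upgrade $\d_k\d_{\bar l}u$ to $\cC^{2,\a,\b}$. The only deviation is at the very end, where you recover $\rho^{2-2\b}\d_\z\d_{\bar\z}u\in\cC^{2,\a,\b}$ by algebraically subtracting the tangential terms from $\b^2\Delta_{\Omega_0}u$, while the paper instead checks the identity $\Delta_{\Omega_0}\bigl(|\z|^{2-2\b}\d_\z\d_{\bar\z}u\bigr)=|\z|^{2-2\b}\d_\z\d_{\bar\z}\Delta_{\Omega_0}u$ and applies Donaldson's estimate a second time; your shortcut is equally valid and slightly more economical.
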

\begin{proof}
Suppose $u$ is a $\cC^{2,\a,\b}$ function such that $\Delta_{\Omega_0}u \in \cC^{2,\a,\b}$, and then we have
\begin{eqnarray}
\Delta_{\Omega_0} (\d_k\d_{\bar l}u) &=& \b^{-2}|\z|^{2-2\b} \d_\z \d_{\bar \z} \d_{k}\d_{\bar l} u + \sum_{i>1} \d_{i}\d_{\bar i}\d_k\d_{\bar l}u 
\nonumber\\
&=& \d_{k}\d_{\bar l} \Delta_{\Omega_0}u \in \cC^{,\a,\b}.
\end{eqnarray}
Then $\d_k\d_{\bar l}u$ is of class $\cC^{2,\a,\b}$ by Donaldson's estimates. 
This implies that 
$$ |\z|^{2-2\b}\d_{\z}\d_{\bar\z}\d_k\d_{\bar l}u  \in \cC^{,\a,\b}, $$
and we also have
\begin{eqnarray}
\label{300}
\Delta_{\Omega_0} (|\z|^{2-2\b}\d_{\z}\d_{\bar\z} u)
 &=& \b^{-2} |\z|^{2-2\b} \d_{\z}\d_{\bar\z} ( |\z|^{2-2\b}\d_\z\d_{\bar\z}u)
+ \sum_{k>1} |\z|^{2-2\b}\d_{\z}\d_{\bar\z}\d_k\d_{\bar k}u
\nonumber\\
&=& |\z|^{2-2\b} \d_{\z}\d_{\bar\z}\Delta_{\Omega_0} u.
\end{eqnarray}
Hence $|\z|^{2-2\b}\d_\z\d_{\bar\z} u\in \cC^{2,\a,\b}$,
and the result follows. 
\end{proof}

From this new description, we infer that the definition of $\cC^{4,\a,\b}$ is independent of holomorphic coordinate charts, and it is a Banach space.
And we will discuss further details later (see Section \ref{sec-fund}, \emph {Fundamentals of $\cC^{4,\a,\b}$ space}). 

Now we can estimate the growth rate of the 3rd. order derivatives of  a $\cC^{4,\a,\b}$ function by Donaldson's estimate. 
\begin{lemma}
\label{lem-3rd}
For any $u\in \cC^{4,\a,\b}$, we have 
$$ \d_{k}\d_{\bar{l}}\d_{i} u \in \cC^{,\a,\b},$$
$$ \rho^{1-\b} \d_k\d_{\bar{\z}}\d_{i} u \in \cC_0^{,\a,\b},$$
$$ \rho^{1-\b} \d_{\z}\d_{\bar{l}}\d_k u \in\cC_0^{,\a,\b},  $$
$$ \rho^{2-2\b} \d_{\z}\d_{\bar{\z}}\d_k u \in \cC^{,\a,\b}, $$
for all $k,l, i >1$. 
\end{lemma}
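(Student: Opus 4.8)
The plan is to deduce all four estimates simultaneously from a single regularity statement: for every fixed $j>1$ the function $\d_j u$ is again of class $\cC^{2,\a,\b}$. Granting this, the four displayed growth rates are just the four coefficients of the $(1,1)$-form $\ddbar(\d_j u)$, read against the definition of a $\cC^{,\a,\b}$ form. Indeed $\d_j u\in\cC^{2,\a,\b}$ forces $\ddbar(\d_j u)\in\cC^{,\a,\b}$, so the defining inequalities for a $(1,1)$-form of class $\cC^{,\a,\b}$ give $\d_k\d_{\bar l}\d_j u\in\cC^{,\a,\b}$, $\rho^{1-\b}\d_k\d_{\bar\z}\d_j u\in\cC_0^{,\a,\b}$, $\rho^{1-\b}\d_\z\d_{\bar l}\d_j u\in\cC_0^{,\a,\b}$ and $\rho^{2-2\b}\d_\z\d_{\bar\z}\d_j u\in\cC^{,\a,\b}$; since the ordinary partial derivatives commute, relabelling $j$ as $i$ or $k$ reproduces the first, second, third and fourth assertions respectively.

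Thus everything reduces to proving $\d_j u\in\cC^{2,\a,\b}$, which I would establish exactly as in Proposition \ref{prop-C4}, via Donaldson's Schauder estimate for the model Laplacian. On the one hand $u\in\cC^{2,\a,\b}$ gives $\d u\in\cC^{,\a,\b}$, whose tangential coefficient $\d_j u$ already lies in $\cC^{,\a,\b}$; this is the base regularity needed to run the estimate. On the other hand, the coefficient $\b^{-2}|\z|^{2-2\b}$ of $\Delta_{\Omega_0}$ depends only on $\z$, so $\d_j$ commutes with $\Delta_{\Omega_0}$ on functions and $\Delta_{\Omega_0}(\d_j u)=\d_j(\Delta_{\Omega_0}u)$. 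By Proposition \ref{prop-C4} we have $\Delta_{\Omega_0}u\in\cC^{2,\a,\b}$, hence $\d(\Delta_{\Omega_0}u)\in\cC^{,\a,\b}$ and in particular its tangential coefficient $\d_j(\Delta_{\Omega_0}u)\in\cC^{,\a,\b}$. Therefore $\Delta_{\Omega_0}(\d_j u)\in\cC^{,\a,\b}$, and Donaldson's estimate lifts $\d_j u$ from $\cC^{,\a,\b}$ to $\cC^{2,\a,\b}$, as required.

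Three of the four estimates can in fact be seen even more directly from the two building blocks supplied by the definition of $\cC^{4,\a,\b}$: from $\d_k\d_{\bar l}u\in\cC^{2,\a,\b}$ the $(1,0)$-form $\d(\d_k\d_{\bar l}u)\in\cC^{,\a,\b}$ has tangential coefficient $\d_i\d_k\d_{\bar l}u\in\cC^{,\a,\b}$ (the first estimate) and $d\z$-coefficient $\rho^{1-\b}\d_\z\d_k\d_{\bar l}u\in\cC_0^{,\a,\b}$ (the third), while from $\rho^{2-2\b}\d_\z\d_{\bar\z}u\in\cC^{2,\a,\b}$ together with $\d_k\rho=0$ one reads off $\rho^{2-2\b}\d_\z\d_{\bar\z}\d_k u\in\cC^{,\a,\b}$ (the fourth). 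The genuinely new content, and the main obstacle, is the second estimate $\rho^{1-\b}\d_k\d_{\bar\z}\d_i u\in\cC_0^{,\a,\b}$: its index pattern, two tangential derivatives together with a single $\d_{\bar\z}$, is not reachable from either building block, and it is precisely this term that forces the detour through $\d_j u\in\cC^{2,\a,\b}$. The only point requiring care is that Donaldson's interior estimate be applicable to the complex valued function $\d_j u$ with merely the base regularity $\cC^{,\a,\b}$; but this is the same tool already used in Proposition \ref{prop-C4}, applied to the real and imaginary parts separately since $\Delta_{\Omega_0}$ is a real operator.
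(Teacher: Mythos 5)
Your proposal is correct and follows essentially the same route as the paper: both reduce the lemma to showing $\d_k u\in\cC^{2,\a,\b}$ by verifying $\Delta_{\Omega_0}(\d_k u)\in\cC^{,\a,\b}$ and invoking Donaldson's Schauder estimate, then reading the four growth rates off the definition of a $\cC^{,\a,\b}$ $(1,1)$-form. Your phrasing via the commutation $\Delta_{\Omega_0}\d_j=\d_j\Delta_{\Omega_0}$ and Proposition \ref{prop-C4} is just a repackaging of the paper's termwise computation $\Delta_{\Omega_0}(\d_k u)=\b^{-2}\rho^{2-2\b}\d_\z\d_{\bar\z}\d_k u+\sum_{j>1}\d_j\d_{\bar j}\d_k u$.
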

\begin{proof}
According to Donaldson's estimate, it is enough to prove on $X-D$
$$ \Delta_{\Omega_0} v \in \cC^{,\a,\b},$$
where $v = \d_k u \in \cC^{,\a,\b}$, and $\Omega_0$ is the local model metric near a point on $D$. 
Then we can compute the Laplacian as 
$$ \Delta_{\Omega_0} (\d_k u) = \b^{-2} \rho^{2-2\b} \d_{\z}\d_{\bar{\z}} \d_k u + \sum_{j>1}\d_j\d_{\bar j}\d_k u ,$$
but $\d (\rho^{2-2\b}\d_{\z}\d_{\bar{\z}}u)$ is of class $\cC^{,\a,\b}$ as a $(1,0)$ form
$$ \d_k(|\z|^{2-2\b} \d_{\z}\d_{\bar{\z}} u) = |\z|^{2-2\b} \d_{\z}\d_{\bar{\z}}\d_k u \in \cC^{,\a,\b}, $$
and the lemma follows since $\d_j\d_{\bar j}\d_k u \in \cC^{,\a,\b}$ for each $j>1$. 
\end{proof}

\begin{lemma}
\label{lem-3rd-1}
We have 
$$ \rho^{2-2\b} \left( \d_{\z}\d_{\bar{l}}\d_{\z} u + \frac{1-\b}{\z} \d_{\z}\d_{\bar{l}} u \right) = O(\rho^{\a\b}), $$
$$ \rho^{3-3\b} \left (\d_{\z}\d_{\bar{\z}}\d_{\z} u + \frac{1-\b}{\z} \d_{\z}\d_{\bar{\z}} u \right) \in \cC_0^{,\a,\b}. $$
\end{lemma}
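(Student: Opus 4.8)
The plan is to reduce both identities to one algebraic observation and then feed two well-chosen auxiliary functions into the estimates already available. The observation is the Leibniz identity
\[
\d_\z\bigl(|\z|^{2-2\b}\,f\bigr)=|\z|^{2-2\b}\Bigl(\d_\z f+\frac{1-\b}{\z}\,f\Bigr),
\]
which holds because $\d_\z|\z|^{2-2\b}=\frac{1-\b}{\z}|\z|^{2-2\b}$. Taking $f=\d_\z\d_{\bar l}u$ and $f=\d_\z\d_{\bar\z}u$ turns the two bracketed combinations in the statement (after using $\rho^{2-2\b}=|\z|^{2-2\b}$) into $\d_\z\bigl(|\z|^{2-2\b}\d_\z\d_{\bar l}u\bigr)$ and $\d_\z\bigl(|\z|^{2-2\b}\d_\z\d_{\bar\z}u\bigr)$ respectively. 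So it suffices to control these two first $\z$-derivatives of rescaled second derivatives of $u$.

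The second identity is the clean one. I would set $w:=|\z|^{2-2\b}\d_\z\d_{\bar\z}u$, which lies in $\cC^{2,\a,\b}$ by the very definition of $\cC^{4,\a,\b}$. Then $\d w$ is a $(1,0)$-form of class $\cC^{,\a,\b}$, so by the definition of $\cC^{,\a,\b}$ for $(1,0)$-forms its $\z$-component obeys $\rho^{1-\b}\d_\z w\in\cC_0^{,\a,\b}$. Since the identity above gives $\d_\z w=|\z|^{2-2\b}\bigl(\d_\z\d_{\bar\z}\d_\z u+\frac{1-\b}{\z}\d_\z\d_{\bar\z}u\bigr)$, multiplying by $\rho^{1-\b}$ yields exactly $\rho^{3-3\b}\bigl(\d_\z\d_{\bar\z}\d_\z u+\frac{1-\b}{\z}\d_\z\d_{\bar\z}u\bigr)\in\cC_0^{,\a,\b}$, which is the second assertion. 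Note that this route uses only the \emph{first} $\z$-derivative of a $\cC^{2,\a,\b}$ function, so no holomorphic Hessian enters.

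For the first identity I would work with $v:=\d_{\bar l}u$. Since $u$ is real and $\d_l u\in\cC^{2,\a,\b}$ by the proof of Lemma \ref{lem-3rd}, we have $v=\overline{\d_l u}\in\cC^{2,\a,\b}$, and the quantity to estimate is $\d_\z\bigl(|\z|^{2-2\b}\d_\z v\bigr)$. The essential difficulty, and the reason the clean route fails here, is that this requires the \emph{second} $\z$-derivative $\d_\z\d_\z v$, a purely holomorphic Hessian: it is governed neither by the $(1,1)$-form $\ddbar v$ (which controls only mixed derivatives) nor by the $(1,0)$-form $\d v$ (which controls only $\d_\z v$), and moreover $|\z|^{2-2\b}\d_\z\d_{\bar l}u$ is merely in $\cC_0^{,\a,\b}$, not in $\cC^{2,\a,\b}$. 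Here I would invoke Brendle's computation (Appendix A.1 of \cite{Br}), the same one underlying the improved bound $\rho^{1-2\b}\d_\z v\in\cC^{,\a,\b}$ recorded in the preceding remark: expanding $v=\tilde v(\xi,w)$ with $\tilde v\in C^{2,\a}$ and differentiating through the non-holomorphic coordinate change, the correction $\frac{1-\b}{\z}\d_\z v$ is precisely the term that cancels the most singular homogeneous pieces produced by $\d_\z\d_\z$. What survives is controlled by the $C^{\a}$-modulus of the second $\xi$-derivatives of $\tilde v$, hence is $O(|\xi|^{\a})=O(\rho^{\a\b})$ after multiplication by $\rho^{2-2\b}$, with the small angle condition $\a\b<1-2\b$ keeping the intermediate exponents admissible.

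The main obstacle is this last step. One cannot simply write $\d_\z v=\rho^{2\b-1}h$ with $h\in\cC^{,\a,\b}$ and differentiate, since $h$ is only Hölder; the cancellation must be extracted from a genuine second-order expansion of $\tilde v$ in the $w$-coordinate, tracking the singular prefactors generated by $\d_\z\xi$ and $\d_\z\bar\xi$ and verifying that the correction term $\frac{1-\b}{\z}\d_\z v$ annihilates exactly the leading $\rho^{2\b-2}$ contribution. This is the $3$rd-order analogue of Brendle's trick that the introduction alludes to; once it is carried out, the growth rate $O(\rho^{\a\b})$ falls out of the Hölder regularity of the second $\xi$-derivatives, and the first identity follows.
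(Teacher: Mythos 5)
Your proposal is correct and follows essentially the same route as the paper: the second assertion is obtained exactly as in the text, by applying $\d_\z$ to $|\z|^{2-2\b}\d_\z\d_{\bar\z}u\in\cC^{2,\a,\b}$ via the Leibniz identity and reading off the $\z$-component of the resulting $\cC^{,\a,\b}$ $(1,0)$-form, while the first assertion is reduced, just as in the paper, to Brendle's appendix applied to $v=\d_{\bar l}u$ using the Laplacian control from Lemma \ref{lem-3rd}. The only cosmetic difference is that you describe the mechanism of Brendle's trick as a second-order expansion in the $w$-coordinate, whereas the appendix actually extracts the cancellation from rescaled interior Schauder estimates and the Cauchy integral formula; since both you and the paper defer to that computation as a black box, this does not affect the argument.
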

\begin{proof}
The first estimate is true thanks to Lemma (\ref{lem-3rd}) and Brendle's appendix \cite{Br}. 
The second estimate follows from the computation 
\begin{eqnarray}
\label{110}
|\z|^{1-\b}\d_{\z} (|\z|^{2-2\b} \d_{\z}\d_{\bar{\z}} u) &=& |\z|^{1-\b}\{ (1-\b)\bar{\z}^{1-\b} \z^{-\b} \d_{\z}\d_{\bar{\z}}u  + |\z|^{2-2\b}\d_{\z}\d_{\bar{\z}} \d_{\z} u \}
\nonumber\\
&=& |\z|^{3-3\b} \left( \d_{\z}\d_{\bar{\z}}\d_{\z} u + \frac{1-\b}{\z}\d_{\z}\d_{\bar{\z}}u \right) \in \cC^{,\a,\b}_0.
\end{eqnarray}
\end{proof}

Suppose $s$ is a holomorphic section of $D$, and $h$ is a Hermitian metric on the line bundle $D$. Put 
$$\Omega: = \omega_0 + \lambda dd^c |s|^{2\b}_h. $$
For $\lambda >0$ small enough, $\Omega$ is a conic K\"ahler metric, and we call it as the \emph{model conic K\"ahler metric } with angle $2\pi\b$ along $D$. 
Notice that the potential $|s|^{2\beta}_h$ is in the H\"older space $\cC^{2,\a,\b}$. 
Moreover, the growth rate of the 3rd. and 4th. order derivatives of $|s|^{2\b}_h$ is computable.

\begin{defn}
\label{def-new3}
We call $\omega_{\vp}: = \omega_0 + dd^c \vp$ as a \emph{$\cC^{2,\a,\b} (\cC^{4,\a,\b})$ conic K\"ahler metric} if 
$\omega_{\vp}$ is a conic K\"ahler metric and $\vp$ is of class $\cC^{2,\a,\b} (\cC^{4,\a,\b})$. 
\end{defn}
Notice that any smooth function is of class $\cC^{4,\a,\b}$ under the small angle condition. 
\begin{lemma}
\label{lem-model}
Under the small angle condition, the model metric $\Omega$ is a $\cC^{4,\a,\b}$ conic K\"ahler metric.
\end{lemma}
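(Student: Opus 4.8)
The plan is to reduce everything to a local computation near the divisor and to verify the two defining relations of $\cC^{4,\a,\b}$ for the potential $u := |s|^{2\b}_h$. Since $\Omega$ is already known to be a conic K\"ahler metric for $\lambda>0$ small (by the quasi-isometry to $\Omega_0$ recorded above), and since $\Omega = \omega_0 + \lambda\, dd^c u$ with $\omega_0$ smooth, it suffices to show $u\in\cC^{4,\a,\b}$; here I use that any smooth function lies in $\cC^{4,\a,\b}$ under the small angle condition, so the $\omega_0$-part is harmless. The condition is purely local near $D$, and on $X-D$ the function $u$ is manifestly $C^\infty$, so I work in a $z$-coordinate chart $(\z,z_2,\dots,z_n)$ centered at a point $p\in D$.

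First I would put $u$ into a standard local form. Choosing a local holomorphic frame of $L_D$ in which $s$ is represented by $\z$ and the metric $h$ by $e^{-\phi}$ with $\phi$ smooth, one gets
\begin{equation}
u = |\z|^{2\b}\,\psi, \qquad \psi := e^{-\b\phi}\in C^\infty,\ \psi>0.
\end{equation}
Because $u\in\cC^{2,\a,\b}$ is already known, by the definition of $\cC^{4,\a,\b}$ it remains only to check the two relations $\rho^{2-2\b}\d_\z\d_{\bar\z}u\in\cC^{2,\a,\b}$ and $\d_k\d_{\bar l}u\in\cC^{2,\a,\b}$ for $k,l>1$ (equivalently, by Proposition \ref{prop-C4}, that $\Delta_{\Omega_0}u\in\cC^{2,\a,\b}$).

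Next I would carry out the Leibniz differentiation and exploit the cancellation of singular powers. Since $\d_\z\d_{\bar\z}(|\z|^{2\b})=\b^2|\z|^{2\b-2}$, $\d_\z(|\z|^{2\b})=\b\z^{\b-1}\bar\z^{\b}$ and $\d_{\bar\z}(|\z|^{2\b})=\b\z^{\b}\bar\z^{\b-1}$, expanding $\d_\z\d_{\bar\z}(|\z|^{2\b}\psi)$ and multiplying by the weight $\rho^{2-2\b}$ collapses every term to a smooth one,
\begin{equation}
\rho^{2-2\b}\,\d_\z\d_{\bar\z}u = \b^2\psi + \b\,\bar\z\,\d_{\bar\z}\psi + \b\,\z\,\d_\z\psi + |\z|^2\,\d_\z\d_{\bar\z}\psi,
\end{equation}
which is smooth and hence lies in $\cC^{2,\a,\b}$ under the small angle condition. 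For the mixed derivatives the factor $|\z|^{2\b}$ is annihilated by $\d_k$ and $\d_{\bar l}$, so $\d_k\d_{\bar l}u = |\z|^{2\b}\,\d_k\d_{\bar l}\psi$ is again of the form $|\z|^{2\b}\times(\text{smooth})$. This is precisely the type of function whose membership in $\cC^{2,\a,\b}$ was already established for $|s|^{2\b}_h$ itself, so the same argument (stability of $\cC^{2,\a,\b}$ under multiplication by smooth functions) applies verbatim. Together these two facts realize the promise that the growth rates of the third- and fourth-order derivatives of $|s|^{2\b}_h$ are computable.

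The main obstacle is therefore not the fourth-order bookkeeping, which the cancellations above make routine, but the underlying claim that a function of the form $|\z|^{2\b}g$ with $g$ smooth belongs to $\cC^{2,\a,\b}$. Verifying this requires controlling both the weighted first derivative $\rho^{1-2\b}(\d u)_\z$ and the H\"older seminorm of $\d\dbar u$ in the $\xi = re^{i\theta}$ coordinate with $r=\rho^{\b}$; this is exactly where the small angle condition $\a\b<1-2\b$ enters, through the improved growth estimate of the Remark above. Once this single $\cC^{2,\a,\b}$-regularity fact is in hand, the two displayed computations complete the verification, and the lemma follows.
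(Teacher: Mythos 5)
Your proposal is correct and follows essentially the same route as the paper: the paper reduces the lemma to showing the local potential $\phi_0+|s|^{2\b}_h$ lies in $\cC^{4,\a,\b}$ and then cites Brendle's computation, while you simply carry out that computation explicitly, writing $|s|^{2\b}_h=|\z|^{2\b}\psi$ and checking the two defining relations of $\cC^{4,\a,\b}$ via the cancellation of the singular powers. The only ingredient you leave partially implicit, namely that $|\z|^{2\b}\cdot(\text{smooth})$ lies in $\cC^{2,\a,\b}$ under the small angle condition, is exactly the fact the paper already records for $|s|^{2\b}_h$ before the lemma, so no gap results.
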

\begin{proof}
Writing $ \omega_0  = dd^c \phi_0$ locally, it is enough to prove that the function $\phi_0 + |s|^{2\b}_h$ is of class $\cC^{4,\a,\b}$. 
But this directly follows from Brendle's computation (Lemma 3.3, \cite{Br}). 
\end{proof}

\begin{rem}
Note here mixed derivatives are not involved in the calculation of Lemma (\ref{lem-model}). 
In fact, the derivative $ \rho^{1-\b} \d_\z\d_{\bar l} (\phi_0 + |s|^2_h)$ is NOT of class $\cC^{2,\a,\b}$ by Brendle's computation. 
\end{rem}
Now we want to compare a general $\cC^{2,\a,\b}$ conic metric with the model metric $\Omega$. 
\begin{defn}
Suppose a pair $(\a,\b)$ is under the small angle condition, and let $\omega_{\vp}$ be a $\cC^{2,\a,\b}$ conic K\"ahler metric. 
We say $\omega_{\vp}$ has the model growth in the 3rd. order if its Christoffel symbols satisfy 
\begin{eqnarray}
\Gamma_{ik}^j \in \cC^{,\a,\b}, \\
\rho^{1-\b}\Gamma^k_{i\z} \in \cC^{,\a,\b}_0,\ \ \rho^{\b-1} \Gamma^{\z}_{ik} \in \cC^{\a,\b}, \\
\Gamma_{i\z}^{\z}\in \cC^{,\a,\b}, \ \ \rho^{2-2\b}\Gamma^k_{\z\z}\in\cC^{,\a,\b}_0,\\
\rho^{1-\b}(\Gamma^{\z}_{\z\z} + (1-\b)\z^{-1}) \in \cC^{,\a,\b}_0. 
\end{eqnarray}
\end{defn}

\begin{prop}
\label{prop-chris}
A $\cC^{4,\a,\b}$ conic K\"ahler metric has the model growth in the 3rd. order.
\end{prop}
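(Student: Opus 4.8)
The plan is to compute the Christoffel symbols of $\omega_{\vp}$ directly from the Kähler formula $\Gamma^{\g}_{\a\b}=\sum_{\mu}g^{\g\bar\mu}\,\d_{\a}g_{\b\bar\mu}$ for the Chern connection, and to read off the required weighted Hölder membership of each of the six types of symbols by combining three ingredients: the weighted regularity of the metric coefficients $g_{\a\bar\b}=\d_{\a}\d_{\bar\b}\Phi$, where $\Phi=\phi_0+\vp$ is a local potential of class $\cC^{4,\a,\b}$ (since $\phi_0$ is smooth and smooth functions lie in $\cC^{4,\a,\b}$ under the small angle condition); the weighted regularity of the inverse coefficients $g^{\g\bar\mu}$; and the growth of the third derivatives $\d_{\a}g_{\b\bar\mu}$ supplied by Lemma \ref{lem-3rd} and Lemma \ref{lem-3rd-1}.

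First I would pin down the inverse metric. Introduce the rescaled coefficients $\tilde g_{\a\bar\b}:=\rho^{w_{\a}+w_{\bar\b}}g_{\a\bar\b}$ with weights $w_{\z}=1-\b$ and $w_{k}=0$. The defining relations of $\cC^{2,\a,\b}$, respectively $\cC^{4,\a,\b}$, for the $(1,1)$-form $\ddbar\Phi$ say exactly that $\tilde g_{k\bar l},\tilde g_{\z\bar\z}\in\cC^{,\a,\b}$ and $\tilde g_{\z\bar l},\tilde g_{k\bar\z}\in\cC^{,\a,\b}_0$, while quasi-isometry to $\Omega_0$ makes the Hermitian matrix $(\tilde g_{\a\bar\b})$ uniformly bounded and uniformly positive. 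Since $\cC^{,\a,\b}$ is a Banach algebra on which inversion of a uniformly positive matrix is continuous, the inverse matrix has entries $\tilde g^{\a\bar\b}\in\cC^{,\a,\b}$, with the mixed entries in $\cC^{,\a,\b}_0$ because they vanish wherever the mixed metric coefficients do; tracing through the diagonal scaling $G=\Lambda\tilde G\Lambda$ with $\Lambda=\mathrm{diag}(\rho^{-w_{\a}})$ gives $g^{\a\bar\b}=\rho^{w_{\a}+w_{\bar\b}}\tilde g^{\a\bar\b}$, so that $\rho^{2\b-2}g^{\z\bar\z}$ and $g^{k\bar l}$ lie in $\cC^{,\a,\b}$ while $\rho^{\b-1}g^{\z\bar l}$ and $\rho^{\b-1}g^{k\bar\z}$ lie in $\cC^{,\a,\b}_0$.

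With these weights in hand, the four symbols not carrying two $\z$ lower indices, namely $\Gamma^{j}_{ik}$, $\Gamma^{k}_{i\z}$, $\Gamma^{\z}_{ik}$ and $\Gamma^{\z}_{i\z}$, follow by pure bookkeeping: in each product $g^{\g\bar\mu}\,\d_{\a}g_{\b\bar\mu}$ one multiplies the weight of the inverse coefficient against the weight appearing in the relevant line of Lemma \ref{lem-3rd}, and the powers of $\rho$ conspire to leave a product of a $\cC^{,\a,\b}$ factor with a $\cC^{,\a,\b}$ (or $\cC^{,\a,\b}_0$) factor, using that $\cC^{,\a,\b}$ is an algebra and $\cC^{,\a,\b}_0$ an ideal in it. For instance $\rho^{1-\b}\Gamma^{k}_{i\z}=g^{k\bar l}\,(\rho^{1-\b}\d_{i}g_{\z\bar l})+\tilde g^{k\bar\z}\,(\rho^{2-2\b}\d_{i}g_{\z\bar\z})$ lands in $\cC^{,\a,\b}_0$ at once. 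I expect these cases to be routine, reproducing exactly the memberships demanded in the definition of model growth.

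The genuine difficulty is the pair $\Gamma^{k}_{\z\z}$ and $\Gamma^{\z}_{\z\z}$, where $\d_{\z}g_{\z\bar\mu}$ is the ``bad'' third derivative of $\Phi$. Here the naive weight count fails: Lemma \ref{lem-3rd-1} controls only the corrected quantity $\d_{\z}g_{\z\bar\mu}+\tfrac{1-\b}{\z}g_{\z\bar\mu}$, and each term $g^{\g\bar\mu}\,\d_{\z}g_{\z\bar\mu}$ separately produces a singular contribution $-\tfrac{1-\b}{\z}g^{\g\bar\mu}g_{\z\bar\mu}$ growing like $\rho^{\b(\a-1)}$. The key is to sum over $\mu$ before estimating: writing $\d_{\z}g_{\z\bar\mu}=-\tfrac{1-\b}{\z}g_{\z\bar\mu}+R_{\mu}$ with $R_{\mu}$ the remainder from Lemma \ref{lem-3rd-1}, the singular pieces collapse through the inverse relation $\sum_{\mu}g^{\g\bar\mu}g_{\z\bar\mu}=\delta^{\g}_{\z}$. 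For $\g=k\neq\z$ this sum vanishes, so the singular parts cancel identically and $\rho^{2-2\b}\Gamma^{k}_{\z\z}=\sum_{\mu}g^{k\bar\mu}(\rho^{2-2\b}R_{\mu})$ is placed in $\cC^{,\a,\b}_0$ by the two estimates of Lemma \ref{lem-3rd-1}; for $\g=\z$ the sum is $1$, returning precisely the model term $-\tfrac{1-\b}{\z}$, whence $\Gamma^{\z}_{\z\z}+(1-\b)\z^{-1}=\sum_{\mu}g^{\z\bar\mu}R_{\mu}$ and multiplication by $\rho^{1-\b}$ gives the last membership. The main obstacle is organizing this cancellation cleanly and then handling the $\mu=l$ remainder $R_{l}$, for which Lemma \ref{lem-3rd-1} supplies only the growth bound $O(\rho^{\a\b})$ rather than Hölder control; I would absorb this by always pairing $R_{l}$ with a vanishing factor ($g^{k\bar l}\in\cC^{,\a,\b}$ or $\tilde g^{\z\bar l},\tilde g^{k\bar\z}\in\cC^{,\a,\b}_0$), so that the resulting product decays fast enough to lie in the stated space.
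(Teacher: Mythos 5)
Your proposal is correct and follows essentially the same route as the paper: the paper derives the same five growth estimates for the first derivatives of the metric coefficients from Lemmas \ref{lem-3rd} and \ref{lem-3rd-1} and then cites Brendle (Lemma 6.3 of \cite{Br}) for the ``standard computation'', which is precisely the weighted inverse-metric bookkeeping and the cancellation of the $(1-\b)\z^{-1}$ singularity via $g^{\g\bar\mu}g_{\z\bar\mu}=\delta^{\g}_{\z}$ that you carry out explicitly. The one caveat you flag --- that Lemma \ref{lem-3rd-1} supplies only the growth bound $O(\rho^{\a\b})$ for the tangential remainder rather than genuine $\cC^{,\a,\b}_0$ membership --- is present in the paper's own proof as well, which simply asserts the stronger statement $\rho^{2-2\b}\left[\d_{\z}g_{\z\bar l}+\tfrac{1-\b}{\z}g_{\z\bar l}\right]\in\cC^{,\a,\b}_0$.
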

\begin{proof}
We have the following growth estimates for the metric thanks to Lemma (\ref{lem-3rd}) and (\ref{lem-3rd-1})
\begin{eqnarray}
\d_i g_{k\bar{l}} \in \cC^{,\a,\b}, \\
\rho^{1-\beta} \d_i g_{\z\bar{l}} \in \cC^{,\a,\b}_0,\\
\rho^{2-2\b} \d_i g_{\z\bar{\z}} \in \cC^{,\a,\b}, \\
\rho^{2-2\b} \left[ \d_{\z} g_{\z\bar{l}} + \frac{1-\beta}{\z} g_{\z\bar{l}} \right] \in \cC^{,\a,\b}_0, \\
\rho^{3-3\b} \left[ \d_{\z}g_{\z\bar{\z}} + \frac{1-\b}{\z}g_{\z\bar{\z}} \right] \in \cC^{,\a,\b}_0.
\end{eqnarray}
Then the result follows by a standard computation (Lemma 6.3, \cite{Br}). 
\end{proof}

\begin{prop}
\label{prop-conn}
Let $\nabla$ be the covariant derivative of a $\cC^{4,\a,\b}$ conic K\"ahler metric.
Then for any $u\in \cC^{4,\a,\b}$, we have 
$$ \nabla_{k}\nabla_{\bar{l}}\nabla_{i} u\in \cC^{,\a,\b}, $$
$$ \rho^{1-\b} \nabla_k\nabla_{\bar{\z}} \nabla_i u \in \cC^{,\a,\b}_0, $$
$$ \rho^{1-\b} \nabla_{\z}\nabla_{\bar{l}}\nabla_{k} u \in \cC^{,\a,\b}_0,$$
$$ \rho^{2-2\b} \nabla_{\z}\nabla_{\bar{\z}}\nabla_{k} u\in \cC^{,\a,\b},$$
$$ \rho^{2-2\b} \nabla_{\z}\nabla_{\bar{l}} \nabla_{\z} u \in \cC^{,\a,\b}_0. $$
for all $i,k,l>1$.  
\end{prop}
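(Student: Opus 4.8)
The plan is to reduce every covariant third derivative to ordinary partial derivatives and then feed the result into the partial-derivative growth estimates already at hand. Since the metric is K\"ahler, the mixed Christoffel symbols $\Gamma^{\delta}_{\mu\bar\nu}$ vanish, so for a function $u$ the first and the mixed second covariant derivatives reduce to partials: $\nabla_\mu u=\d_\mu u$ and $\nabla_\mu\nabla_{\bar\nu}u=\d_\mu\d_{\bar\nu}u$. Applying one more covariant derivative and again discarding the vanishing mixed symbols, each of the five expressions (all of which carry two holomorphic and one anti-holomorphic index, with the anti-holomorphic one in the middle) acquires the uniform schematic form
$$ \nabla_\mu\nabla_{\bar\nu}\nabla_\gamma u = \d_\mu\d_{\bar\nu}\d_\gamma u - \Gamma^{\delta}_{\mu\gamma}\,\d_{\bar\nu}\d_\delta u, $$
where $\delta$ ranges over $\z$ and the tangential indices. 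Thus each quantity splits into a \emph{principal} third-order partial-derivative term plus \emph{connection corrections} pairing a Christoffel symbol with a Hessian component of $u$, and the whole problem becomes one of weight bookkeeping.

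First I would dispose of the principal terms, which are exactly the objects controlled by Lemma \ref{lem-3rd} and Lemma \ref{lem-3rd-1}. For the first, second and fourth estimates the bare partial derivatives $\d_k\d_{\bar l}\d_i u\in\cC^{,\a,\b}$, $\rho^{1-\b}\d_k\d_{\bar\z}\d_i u\in\cC^{,\a,\b}_0$ and $\rho^{2-2\b}\d_\z\d_{\bar\z}\d_k u\in\cC^{,\a,\b}$ already carry the claimed growth. For the third and fifth estimates, where two of the three differentiations are in the $\z$-direction, the bare principal term is too singular in isolation; there I would instead retain the corrected combinations $\d_\z\d_{\bar l}\d_k u$ controlled by Lemma \ref{lem-3rd} and, crucially, $\d_\z\d_{\bar l}\d_\z u+\tfrac{1-\b}{\z}\d_\z\d_{\bar l}u$ supplied by Lemma \ref{lem-3rd-1}, which already has the correct weighted decay.

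Next I would estimate the connection corrections $\Gamma^{\delta}_{\mu\gamma}\,\d_{\bar\nu}\d_\delta u$ term by term, using Proposition \ref{prop-chris} for the Christoffel factors and the second-order bounds furnished by $u\in\cC^{2,\a,\b}$ for the Hessian factors. The recurring pattern is that a tangential symbol ($\Gamma^j_{ik}\in\cC^{,\a,\b}$) multiplies a regular Hessian component, while an angular symbol (one of $\rho^{1-\b}\Gamma^k_{i\z}$, $\rho^{\b-1}\Gamma^\z_{ik}$, $\rho^{2-2\b}\Gamma^k_{\z\z}$, $\Gamma^\z_{i\z}$, all in the appropriate H\"older classes) multiplies a singular Hessian component; after multiplying by the prefactor $\rho^{1-\b}$ or $\rho^{2-2\b}$ attached to each estimate, the powers of $\rho$ cancel and the product lands in $\cC^{,\a,\b}$. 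The $\cC^{,\a,\b}_0$ conclusions of the second, third and fifth estimates then follow because in each such product at least one factor carries genuine $\cC_0$-type vanishing on $D$ (inherited from the weighted mixed metric components $\rho^{1-\b}\d_i g_{\z\bar l}\in\cC^{,\a,\b}_0$ appearing in the proof of Proposition \ref{prop-chris}, or from the $\cC_0$-valued Hessian entries), and a product of a bounded H\"older function with a $\cC_0$ function stays in $\cC_0$.

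The step I expect to be the main obstacle is concentrated in the terms carrying the singular Christoffel symbol $\Gamma^\z_{\z\z}$, which behaves like $-(1-\b)\z^{-1}$ and cannot be absorbed by any positive power of $\rho$. In the fifth estimate the correction $-\Gamma^\z_{\z\z}\,\d_{\bar l}\d_\z u$ is of exactly the same order as the principal term, so neither is controllable alone; the plan is to add and subtract $\tfrac{1-\b}{\z}\d_\z\d_{\bar l}u$ and write
$$ \d_\z\d_{\bar l}\d_\z u - \Gamma^\z_{\z\z}\,\d_{\bar l}\d_\z u = \Big(\d_\z\d_{\bar l}\d_\z u + \tfrac{1-\b}{\z}\d_\z\d_{\bar l}u\Big) - \Big(\Gamma^\z_{\z\z}+(1-\b)\z^{-1}\Big)\d_{\bar l}\d_\z u, $$
where the first bracket is handled by Lemma \ref{lem-3rd-1} and the second is a genuine $\cC^{,\a,\b}_0$-class factor, by the last line of the model-growth estimates in Proposition \ref{prop-chris}, multiplied by a controlled Hessian. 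Checking that this exact cancellation of the $\z^{-1}$ singularity, weighted by $\rho^{2-2\b}$, indeed lands in $\cC^{,\a,\b}_0$, together with the analogous but milder reorganization needed for the third estimate, is the crux of the argument; everything else is routine weight counting.
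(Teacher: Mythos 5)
Your proposal is correct and follows essentially the same route as the paper, whose proof of this proposition is a one-line citation to Lemma (\ref{lem-3rd}), Lemma (\ref{lem-3rd-1}), Proposition (\ref{prop-chris}) and Brendle's Proposition 6.4: you have simply written out that computation, including the key regularization $\Gamma^{\z}_{\z\z}+(1-\b)\z^{-1}$ which is exactly the paper's $\tilde\Gamma$/$\nabla^{\phi}$ device from Definition (\ref{def-christ}) and equation (\ref{1111}).
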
 
\begin{proof}
Thanks to Lemma (\ref{lem-3rd}), (\ref{lem-3rd-1}) and Proposition (\ref{prop-chris}), we run the same computation with Brendle's (Proposition 6.4, \cite{Br}). 
\end{proof}

\begin{lemma}
We have 
$$ \rho^{3-3\b} \nabla_\z\nabla_{\bar\z}\nabla_\z u \in\cC^{,\a,\b}_0. $$
\end{lemma}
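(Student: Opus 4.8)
The plan is to mimic the strategy used throughout this section: convert the statement about the covariant third derivative $\nabla_\z\nabla_{\bar\z}\nabla_\z u$ into a statement about ordinary partial derivatives via the Christoffel symbols, estimate each resulting term using the growth rates already established, and collect everything. The key input is that for a $\cC^{4,\a,\b}$ conic K\"ahler metric we may use the model growth of the Christoffel symbols (Proposition \ref{prop-chris}), together with the partial-derivative estimates of Lemma \ref{lem-3rd} and Lemma \ref{lem-3rd-1}, and the covariant estimates of Proposition \ref{prop-conn}.

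First I would write out $\nabla_\z\nabla_{\bar\z}\nabla_\z u$ in terms of partial derivatives. Since $u$ is a function, $\nabla_{\bar\z}\nabla_\z u = \d_{\bar\z}\d_\z u$ (the mixed Hessian is just the partial Hessian, as there is no $(2,0)$ Christoffel contribution hitting a holomorphic index against an antiholomorphic one in this order), so the outermost $\nabla_\z$ acts on the $(1,1)$-type object $\d_\z\d_{\bar\z}u$, producing schematically
\begin{equation}
\nabla_\z\nabla_{\bar\z}\nabla_\z u = \d_\z\d_{\bar\z}\d_\z u - \tg^\mu_{\z\z}\,\d_\mu\d_{\bar\z}u,
\end{equation}
where $\mu$ ranges over $\z$ and the indices $k>1$. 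The principal term $\d_\z\d_{\bar\z}\d_\z u$ is controlled by the second estimate of Lemma \ref{lem-3rd-1}: multiplying by $\rho^{3-3\b}$ and using that the correction $\frac{1-\b}{\z}\d_\z\d_{\bar\z}u$ combines to give a $\cC^{,\a,\b}_0$ quantity, one sees that $\rho^{3-3\b}\d_\z\d_{\bar\z}\d_\z u$ differs from a $\cC^{,\a,\b}_0$ function by $\rho^{3-3\b}\cdot\frac{1-\b}{\z}\d_\z\d_{\bar\z}u = (1-\b)\rho^{2-3\b}(\bar\z/|\z|)\cdot\rho^{2-2\b}\d_\z\d_{\bar\z}u$, wait — the clean way is to treat the bracketed expression as the single controlled object and handle the remaining purely algebraic factor separately.

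The Christoffel correction terms are then estimated by pairing the growth rate of $\tg^\mu_{\z\z}$ from Proposition \ref{prop-chris} against the appropriate partial second derivative of $u$. For the $\mu=\z$ term, I would use $\rho^{1-\b}(\tg^\z_{\z\z}+(1-\b)\z^{-1})\in\cC^{,\a,\b}_0$ together with $\rho^{2-2\b}\d_\z\d_{\bar\z}u\in\cC^{2,\a,\b}$; note the $(1-\b)\z^{-1}$ piece of $\tg^\z_{\z\z}$ is precisely what cancels against the $\frac{1-\b}{\z}\d_\z\d_{\bar\z}u$ term in Lemma \ref{lem-3rd-1}, which is exactly why the bracketed combination appears. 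For the $\mu=k>1$ term, I would use $\rho^{2-2\b}\tg^k_{\z\z}\in\cC^{,\a,\b}_0$ paired with $\d_k\d_{\bar\z}u$, whose growth is read off from the definition of $\cC^{2,\a,\b}$ applied to $\d u$. Matching the $\rho$-weights so that every product carries exactly $\rho^{3-3\b}$ and lies in $\cC^{,\a,\b}_0$ is the routine bookkeeping step.

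The main obstacle I expect is the exact cancellation of the singular $\z^{-1}$ factors. Both the leading partial derivative $\d_\z\d_{\bar\z}\d_\z u$ (through Lemma \ref{lem-3rd-1}) and the Christoffel symbol $\tg^\z_{\z\z}$ (through Proposition \ref{prop-chris}) individually contain a $\frac{1-\b}{\z}$ singularity that is only tamed after multiplication by $\rho^{3-3\b}$, and these two singular contributions must cancel in the covariant derivative rather than compound. Verifying that the signs and coefficients align so that the net result lands in $\cC^{,\a,\b}_0$ — and is not merely $O(\rho^{\a\b})$ as in the weaker first estimate of Lemma \ref{lem-3rd-1} — is the delicate point; everything else follows by collecting the $\cC^{,\a,\b}_0$ terms, since $\cC^{,\a,\b}_0$ is closed under addition.
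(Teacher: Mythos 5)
Your proposal is correct and follows essentially the same route as the paper: expand $\nabla_\z u_{,\z\bar\z} = \d_\z u_{,\z\bar\z} - \Gamma^\mu_{\z\z}u_{,\mu\bar\z}$, add and subtract $(1-\b)\z^{-1}\d_\z\d_{\bar\z}u$ so that the principal term becomes the controlled bracket of Lemma (\ref{lem-3rd-1}) and the $\mu=\z$ Christoffel term becomes $(\Gamma^\z_{\z\z}+(1-\b)\z^{-1})\d_\z\d_{\bar\z}u$, then pair weights ($\rho^{1-\b}$ on the corrected Christoffel symbol against $\rho^{2-2\b}\d_\z\d_{\bar\z}u$, and $\rho^{2-2\b}\Gamma^k_{\z\z}$ against $\rho^{1-\b}u_{,k\bar\z}$). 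The cancellation of the two $\frac{1-\b}{\z}$ singularities that you flag as the delicate point is exactly the mechanism the paper's proof exploits, and the signs do align.
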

\begin{proof}
Compute the third order covariant derivative in normal direction only 
\begin{eqnarray}
|\z|^{3-3\b} \nabla_{\z} u_{,\z\bar{\z}} &=& |\z|^{3-3\b} ( \d_{\z}u_{,\z\bar{\z}} - \Gamma^{\mu}_{\z\z}u_{,\mu \bar{\z}})
\nonumber\\
&=& |\z|^{3-3\b} \left( \d_{\z}\d_{\bar\z} \d_{\z}u + \frac{1-\b}{\z} \d_{\z}\d_{\bar\z} u \right)
\nonumber\\
&-& |\z|^{3-3\b} \sum_{k>1}\Gamma^{k}_{\z\z} u_{k\bar\z}
\nonumber\\
&-& |\z|^{3-3\b} (\Gamma^{\z}_{\z\z} + (1-\b) \z^{-1}) \d_{\z}\d_{\bar\z}u.
\end{eqnarray}
Since 
$$  |\z|^{3-3\b} \left( \d_{\z}\d_{\bar\z} \d_{\z}u + \frac{1-\b}{\z} \d_{\z}\d_{\bar\z} u \right) \in \cC^{,\a,\b}_0,$$
and 
$$ |\z|^{1-\b} (\Gamma^{\z}_{\z\z} + (1-\b) \z^{-1}) \in \cC^{,\a,\b}_0.$$
The result follows. 
\end{proof}

\subsection{Curvature tensors}
Before continuing the calculation, we would like to introduce some conventions
\begin{defn}
\label{def-tensor}
A tensor in the $z$-coordinate denoted by
$$\Xi^{\z,\cdots,\bar\z,A}_{\z,\cdots,\bar\z,B}$$ 
is called cone admissible if 
$$ |\z|^{(p-q)(1-\b)}\Xi^{\z,\cdots,\bar\z,A}_{\z,\cdots,\bar\z,B} \in \cC^{,\a,\b}, $$
where $p$ is number of all $\z,\bar\z$ appearing in the upper indices of $\Xi$, $q$ is the number of all $\z,\bar\z$ appearing in the lower indices,
and $A,B$ only consist of indices which are bigger than $1$. 
\end{defn}

\begin{defn}
\label{def-christ}
Let $\Gamma^{\gamma}_{\mu\lambda}$ be the Christoffel symbols of a conic K\"ahler metric.
We denote by $\tilde{\Gamma}^{\gamma}_{\mu\lambda}$ the following tensor
$$ \tilde{\Gamma}^{\z}_{\z\z} = \Gamma^{\z}_{\z\z} + (1-\b)\z^{-1},$$
and $$\tilde{\Gamma}^{\gamma}_{\mu\lambda} = {\Gamma}^{\gamma}_{\mu\lambda}, $$
if any one of the indices $\gamma, \mu, \lambda$ is not equal to $\z$.
\end{defn}

\begin{rem}
\label{rem-chirs}
If $\omega$ is under the model growth in the 3rd. order, then it is easy to see that its Christoffel symbols 
$\Gamma^{\gamma}_{\mu\lambda}$ are all cone admissible tensors except one $\Gamma^{\z}_{\z\z}$, but
the tensor $\tilde\Gamma^{\gamma}_{\mu\lambda}$ are cone admissible tensor for all indices $\gamma, \mu, \lambda$.
\end{rem}

Moreover, take a local function in $z$-coordinate 
$$\phi: = -(1-\b)\log |\z|^2.$$
We introduce a new first order differential operator $\d^{\phi}$ as 
$$ \d^{\phi}: = |\z|^{2\b-2}\d ( |\z|^{2-2\b} \cdot ), $$
and in each direction, it can be computed as 
$$ \d^{\phi}_{\z} = \d_{\z} +  (1-\b)\z^{-1}\cdot,\ \ \d^{\phi}_{k} = \d_k.$$
If we denote by $\d^{\phi}_{\bar\mu}$ the direction component of $\ol{\d^{\phi}}$, then an easy calculation shows the
commutative relation
$$ \d^{\phi}\dbar + \dbar\d^{\phi} = 0,$$ on $X-D$.
Furthermore, we can re-arrange our connection operator when it acts on a $1$ form $\tau$
\begin{eqnarray}
\label{newcon}
\nabla_{\z}\tau_{\z} &=& \d_{\z} \tau_{\z} - \Gamma^{\gamma}_{\z\z}\tau_{\gamma}
\nonumber\\
&=& (\d_{\z}\tau_{\z} + (1-\b)\z^{-1} \tau_\z) - (\Gamma^{\z}_{\z\z} + (1-\b)\z^{-1})\tau_{\z} - \sum_{k>1}\Gamma^{k}_{\z\z}\tau_k
\nonumber\\
&=& \d^{\phi}_{\z} \tau_\z - \tilde{\Gamma}^{\gamma}_{\z\z}\tau_\gamma.
\end{eqnarray}
and for $k>1$
\begin{eqnarray}
\label{newcon2}
\nabla_{k}\tau_{\z} &=& \d_k \tau_\z - \Gamma^{\gamma}_{k\z}\tau_\gamma
\nonumber\\
&=& \d^{\phi}_{k}\tau_{\z} - \tgg^{\gamma}_{k\z} \tau_\g
\end{eqnarray}
In order to emphasis this new decomposition of $\nabla$, we introduce the following convention $\nabla^{\phi}$
\begin{equation}
\label{1111}
 \nabla^{\phi}_{\z}\tau_{\z} =  \d^{\phi}_\z \tau_\z - \tilde{\Gamma}^{\gamma}_{\z\z}\tau_\gamma,
\end{equation}
and 
\begin{equation}
\label{1112}
 \nabla^{\phi}_{\mu}\tau_{\lambda} = \nabla_{\mu}\tau_{\lambda},   
\end{equation}
if any one of $\mu, \lambda$ is not $\z$. 

Now we can discuss the 4th. order derivatives of a function $u\in\cC^{4,\a,\b}$. 
\begin{lemma}
\label{lem-4th}
We have 
$$ \rho^{1-\b}\d_i\d_{\bar j}\d_\z\d_{\bar l} u \in \cC^{,\a,\b}_0,$$
$$ \rho^{3-3\b} \left( \d_k\d_\z\d_{\bar\z}\d_\z u + \frac{(1-\b)}{\z} \d_k\d_{\bar\z}\d_\z u \right) \in \cC^{,\a,\b}_0,$$
$$ \rho^{3-3\b} \left( \d_{\bar{l}}\d_{\bar\z}\d_\z\d_{\bar\z} u + \frac{(1-\b)}{\bar\z} \d_{\bar l}\d_{\bar\z}\d_\z u \right) \in \cC^{,\a,\b}_0, $$
for all $i,j,k,l>1$.
\end{lemma}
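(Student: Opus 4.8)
The plan is to handle the three estimates in increasing order of difficulty, reducing each to something already available at third order. The mixed estimate $\rho^{1-\b}\d_i\d_{\bar j}\d_\z\d_{\bar l}u\in\cC^{,\a,\b}_0$ is essentially free: by the definition of $\cC^{4,\a,\b}$ the function $\d_i\d_{\bar l}u$ is of class $\cC^{2,\a,\b}$, so the $(1,1)$-form $\ddbar(\d_i\d_{\bar l}u)$ is of class $\cC^{,\a,\b}$. Its mixed $(\z,\bar j)$-component carries exactly the weight $\rho^{1-\b}$ and lies in $\cC^{,\a,\b}_0$; commuting the ordinary partial derivatives, this component is $\rho^{1-\b}\d_i\d_{\bar j}\d_\z\d_{\bar l}u$, which is the claim. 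No growth estimate is needed.

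For the two remaining combinations I would first note that they are complex conjugates of one another (for real $u$, the general case following by linearity since $i\ddbar$ is real), so it suffices to treat the one with $\d_k$. Writing $H:=\d_\z\d_{\bar\z}u$ and $F:=|\z|^{2-2\b}H$, I would use the operator $\d^{\phi}_{\z}=\d_\z+(1-\b)\z^{-1}$: since $\d_\z\d_{\bar\z}\d_\z u=\d_\z H$ and $\d_{\bar\z}\d_\z u=H$, the bracket equals $\d_k\d^{\phi}_{\z}H$. As $|\z|$ is independent of $z_k$, multiplying by $\rho^{3-3\b}$ and using $\rho^{3-3\b}\d^{\phi}_{\z}H=|\z|^{1-\b}\d_\z F=:G$ reduces the estimate to the single assertion $\d_k G\in\cC^{,\a,\b}_0$, where $G$ is precisely the quantity already shown to lie in $\cC^{,\a,\b}_0$ in Lemma (\ref{lem-3rd-1}) via equation (\ref{110}).

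The crux is to pass from $G\in\cC^{,\a,\b}_0$ to $\d_k G\in\cC^{,\a,\b}_0$, and I expect this to be the main obstacle. One cannot differentiate a mere H\"older function, and one cannot simply apply Lemma (\ref{lem-3rd-1}) to $\d_k u$ in place of $u$: pushing Donaldson's estimate through $\d_k$ only gives $\Delta_{\Omega_0}(\d_k u)=\d_k\Delta_{\Omega_0}u\in\cC^{,\a,\b}$, one order short of the $\cC^{2,\a,\b}$ that membership $\d_k u\in\cC^{4,\a,\b}$ would require, so the abstract bootstrap fails. Instead I would pass to the $w$-coordinate and invoke Brendle's explicit computation (Appendix, \cite{Br}). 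Since $F\in\cC^{2,\a,\b}$, its representative $\wt F$ is of class $C^{2,\a}$ in $(\xi,w_2,\dots,w_n)$, and the weighted operator $|\z|^{1-\b}\d_\z$ becomes a bounded combination of the genuine derivatives $\d_\xi$ and $\d_{\bar\xi}$; thus $G$ is a first derivative of $\wt F$, and applying the good-direction derivative $\d_k=\d_{w_k}$ produces a genuine mixed second derivative of $\wt F\in C^{2,\a}$, hence a function of class $C^{\a}$ vanishing along $D$, i.e. an element of $\cC^{,\a,\b}_0$. The correction term $\frac{1-\b}{\z}$ is essential here: it packages the singular operator $\d_\z$ into $\d^{\phi}_{\z}$, which in the $w$-coordinate becomes the regular operator $\d_\xi$, whereas the bare $\d_\z$ derivative would be too singular for the estimate to hold. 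The delicate point, and the real content of Brendle's appendix, is to control the coefficient of $\d_{\bar\xi}$ (bounded but discontinuous at $\xi=0$) and to verify the vanishing that produces the subscript $0$. Estimate three then follows by conjugation.
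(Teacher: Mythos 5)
Your proof is correct and takes essentially the same route as the paper: both arguments reduce each line of the lemma to a weighted mixed second derivative of the $\cC^{2,\a,\b}$ functions $\d_i\d_{\bar l}u$ and $F=|\z|^{2-2\b}\d_\z\d_{\bar\z}u$ (your $\d_kG=|\z|^{1-\b}\d_k\d_\z F$ is exactly the paper's computation (\ref{220})), with the third line by conjugation. The one caveat is that your justification via ``$\wt F\in C^{2,\a}$ in the $w$-coordinate'' overstates what membership in $\cC^{2,\a,\b}$ provides (the pure $\d_\xi\d_\xi$ direction is not controlled), but the particular derivatives you need, $\d_{w_k}\d_\xi\wt F$ and $\d_{w_k}\d_{\bar\xi}\wt F$, i.e.\ $\rho^{1-\b}\d_k\d_\z F$ and $\rho^{1-\b}\d_k\d_{\bar\z}F$, are precisely among those appearing in the $\cC^{2,\a,\b}$ norm, so the conclusion is unaffected.
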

\begin{proof}
Recall that $|\z|^{2-2\b}\d_\z\d_{\bar\z}u$ and $\d_i\d_{\bar j} u$ are both of class $\cC^{2,\a,\b}$, and we can take two derivatives of it 
\begin{eqnarray}
\label{220}
|\z|^{1-\b}\d_k\d_{\bar\z} (|\z|^{2-2\b} \d_{\z}\d_{\bar\z} u) &=& |\z|^{1-\b}\d_k ( |\z|^{2-2\b} \d_{\bar\z}\d_\z\d_{\bar\z} u + (1-\b) \z^{1-\b}\bar\z^{-\b} \d_\z\d_{\bar\z} u)
\nonumber\\
&=&  |\z|^{3-3\b} \left( \d_k\d_\z\d_{\bar\z}\d_\z u + \frac{(1-\b)}{\z} \d_k\d_{\bar\z}\d_\z u \right) \in \cC^{,\a,\b}_0,
\end{eqnarray}
\end{proof}

\begin{lemma}
\label{lem-3partial}
We have 
\begin{equation}
\label{ap-005}
\z\cdot \d_\z\d_{\bar l}u \in \cC^{2,\a,\b},
\end{equation}
\begin{equation}
\label{ap-3par-tag}
|\z|^{2-2\b}\d_k \d_{\bar l} u \in \cC^{2,\a,\b}
\end{equation}
and 
\begin{equation}
\label{ap-006} 
|\z|^{2-2\b} \d_\z\d_{\bar l}u \in \cC^{2,\a,\b}, 
\end{equation}
for all $l >1$. 
\end{lemma}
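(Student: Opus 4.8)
The plan is to obtain each of the three memberships from Donaldson's model estimate, exactly as in the proof of Proposition \ref{prop-C4}: for a function $F$ on $X-D$, if $F\in\cC^{,\a,\b}$ and $\Delta_{\Omega_0}F\in\cC^{,\a,\b}$, then $F\in\cC^{2,\a,\b}$ (applied to the real and imaginary parts separately, which is legitimate since $i\ddbar$ is real). The two inputs I would lean on throughout are the functions already known to be of class $\cC^{2,\a,\b}$ by the very definition of $\cC^{4,\a,\b}$, namely $P:=|\z|^{2-2\b}\d_\z\d_{\bar\z}u$ and $\d_k\d_{\bar l}u$. The entire strategy is to rewrite $\Delta_{\Omega_0}F$, for $F$ equal to $\z\,\d_\z\d_{\bar l}u$, $|\z|^{2-2\b}\d_k\d_{\bar l}u$, or $|\z|^{2-2\b}\d_\z\d_{\bar l}u$, in terms of first and second derivatives of $P$ and of $\d_k\d_{\bar l}u$ (which all lie in $\cC^{,\a,\b}$), multiplied by explicit powers of $\z,\bar\z$.

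First I would check that each $F$ itself lies in $\cC^{,\a,\b}$. Writing $v:=\d_\z\d_{\bar l}u$, the hypothesis $\ddbar u\in\cC^{,\a,\b}$ gives $\rho^{1-\b}v\in\cC_0^{,\a,\b}$; since $\z\rho^{\b-1}=\xi$, one has $\z v=\xi\,(\rho^{1-\b}v)\in\cC^{,\a,\b}$, and the other two weights are handled the same way. Next I would split $\Delta_{\Omega_0}=\b^{-2}|\z|^{2-2\b}\d_\z\d_{\bar\z}+\sum_{j>1}\d_j\d_{\bar j}$. The tangential half is the easy one: because the weights $\z$ and $|\z|^{2-2\b}$ depend on $\z$ only, the operators $\d_j\d_{\bar j}$ commute past them, leaving the weight times a fourth-order tangential derivative of $v$ (controlled by the first estimate of Lemma \ref{lem-4th}) or a tangential second derivative of $\d_k\d_{\bar l}u$ (controlled by $\d_k\d_{\bar l}u\in\cC^{2,\a,\b}$), the leftover power of $\rho$ being harmless.

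The crux is the normal part $\b^{-2}|\z|^{2-2\b}\d_\z\d_{\bar\z}(\mathrm{weight}\cdot v)$. Expanding by Leibniz, besides the principal term $|\z|^{2-2\b}\cdot(\mathrm{weight})\cdot\d_\z\d_{\bar\z}v$ one produces cross terms in which $\d_\z$ and/or $\d_{\bar\z}$ fall on the weight, using $\d_\z|\z|^{2-2\b}=(1-\b)\z^{-\b}\bar\z^{1-\b}$ and its conjugate. These cross terms assemble into the corrected combinations $\d_\z v+\frac{1-\b}{\z}v$ of Lemma \ref{lem-3rd-1} and the fourth-order combinations of Lemma \ref{lem-4th}, whose leading singular terms cancel. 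The genuinely new object is the pure normal fourth derivative $\d_\z\d_{\bar\z}v=\d_\z\d_{\bar\z}\d_\z\d_{\bar l}u$, which no earlier lemma controls directly; here I would use that $\d_{\bar l}P=|\z|^{2-2\b}\d_{\bar l}\d_\z\d_{\bar\z}u$ and $\d_\z\d_{\bar l}P$ are a tangential and a mixed derivative of the $\cC^{2,\a,\b}$ function $P$, whence
\[
|\z|^{2-2\b}\d_\z\d_{\bar\z}v=\d_\z\d_{\bar l}P-(1-\b)\z^{-\b}\bar\z^{1-\b}\,|\z|^{2\b-2}\d_{\bar l}P,
\]
which expresses the bad term through data already in $\cC^{,\a,\b}$.

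The main obstacle, and the place where the small angle hypothesis is indispensable, is the bookkeeping of the residual powers of $\rho$. Every term above has the shape $\rho^{\gamma}\cdot(\text{something in }\cC^{,\a,\b})$ up to a phase expressible through $\xi$ or $\bar\xi$, and membership in $\cC^{,\a,\b}$ requires $\rho^{\gamma}=r^{\gamma/\b}$ to be $C^{\a}$ in the $w$-coordinate, i.e. $\gamma/\b\geq\a$. The exponents that occur ($2-4\b$, $2-3\b$, $1-2\b$, $1-2\b+\a\b$, and so on) all satisfy this precisely because $\a\b<1-2\b$ forces $1/\b-2>\a$. Collecting the terms then gives $\Delta_{\Omega_0}F\in\cC^{,\a,\b}$ in each of the three cases, and Donaldson's estimate finishes the proof. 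The one delicate point I expect to require care is the combination coming from the first estimate of Lemma \ref{lem-3rd-1}, which is stated there only as an $O(\rho^{\a\b})$ bound rather than as a $\cC^{,\a,\b}$ statement; I would either upgrade it by the same interior-rescaling argument used in Brendle's appendix, or, preferably, bypass it entirely by the rewriting through $P$ above.
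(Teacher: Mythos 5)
Your proposal is correct and follows essentially the same route as the paper: compute $\Delta_{\Omega_0}$ of each weighted quantity, reduce the normal-direction terms to the third- and fourth-order estimates of Lemmas (\ref{lem-3rd}), (\ref{lem-3rd-1}) and (\ref{lem-4th}), and conclude by Donaldson's estimate. Your rewriting of the pure normal fourth derivative through $P=|\z|^{2-2\b}\d_\z\d_{\bar\z}u$ is precisely the computation (equation (\ref{220})) by which the paper proves Lemma (\ref{lem-4th}), so it is the same mechanism in a different packaging; your closing remark that the first estimate of Lemma (\ref{lem-3rd-1}) is only an $O(\rho^{\a\b})$ bound is a fair observation about a point the paper itself leaves implicit.
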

\begin{proof}
On the tangential direction $z_j$, it is easy to have 
$$ \d_j\d_{\bar j} (\z \d_\z\d_{\bar l} u) = \z (\d_\z \d_{\bar l} \d_j\d_{\bar j} u) = O(\rho^{\b}). $$
On the normal direction, we have 
$$ \d_{\z}\d_{\bar\z} (\z\d_{\z}\d_{\bar l} u) = \z \d_{\z}\d_{\bar\z}\d_\z\d_{\bar l} u + \d_{\bar\z}\d_{\z}\d_{\bar l}u  $$
But thanks to Lemma (\ref{lem-4th}), we have 
$$  \z |\z|^{2-2\b} \d_{\z}\d_{\bar\z}\d_\z\d_{\bar l} u \in\cC^{,\a,\b}, \ \  |\z|^{2-2\b}\d_{\bar\z}\d_{\z}\d_{\bar l}u \in\cC^{,\a,\b}, $$
Then equation (\ref{lem-3partial}) follows. Now equation (\ref{ap-3par-tag}) follows from the calculation 
\begin{eqnarray}
\label{ap-eqq}
|\z|^{2-2\b} \d_\z\d_{\bar\z} ( |\z|^{2-2\b}\d_k \d_{\bar l} u) &=& |\z|^{4-4\b} \{ \d_\z\d_{\bar\z}u_{, k\bar l} + (1-\b)\z^{-1}\d_{\bar\z} u_{,k\bar l}
\nonumber\\
& + & (1-\b) \bar{\z}^{-1} \d_\z u_{, k\bar l} + (1-\b)^2 |\z|^{-2} u_{,k\bar l} \}
\nonumber\\
&=& O (\rho^{2-4\b}).
\end{eqnarray}
Moreover, we have 
\begin{eqnarray}
 |\z|^{2-2\b} \d_\z\d_{\bar\z} ( |\z|^{2-2\b}\d_\z \d_{\bar l} u) & = & |\z|^{4-4\b} \{ \d_{\bar l}\d^{\phi}_\z \d_{\bar\z}\d_\z u  
\nonumber\\
& + & (1-\b) \bar{\z}^{-1} (\d_\z\d_\z\d_{\bar l}u + (1-\b)\z^{-1} \d_\z\d_{\bar l}u ) \} 
\nonumber\\
&=& O( \rho^{ 1-2\b}).
\end{eqnarray}
The last line follows from Lemma (\ref{lem-3rd-1}), and equation (\ref{ap-006}) follows. 
\end{proof}

\begin{lemma}
\label{lem-vol}
Let $u$ and $v$ be two $\cC^{4,\a,\b}$ functions, and then we have  
$$ |\z|^{2-2\b} (\d_{\z}\d_{\bar l} u)( \d_k\d_{\bar\z}v) $$
is of class $\cC^{2,\a,\b}$ for any fixed $k,l >1$. In particular, 
$$ |\z|^{2-2\b} (\d_{\z}\d_{\bar l} u)( \d_k\d_{\bar\z}u) \in\cC^{2,\a,\b},$$
for any $k,l>1$.
\end{lemma}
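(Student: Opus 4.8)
The plan is to deduce the claim from Donaldson's Schauder estimate for the model Laplacian $\Delta_{\Omega_0}$, exactly as in the proofs of Proposition \ref{prop-C4} and Lemma \ref{lem-3partial}. Write $f := \d_\z\d_{\bar l}u$ and $g := \d_k\d_{\bar\z}v$, so the function in question is $w := |\z|^{2-2\b}fg$. By Donaldson's estimate it suffices to verify that $w\in\cC^{,\a,\b}$ and that $\Delta_{\Omega_0}w\in\cC^{,\a,\b}$, the latter amounting to the tangential part $\sum_{j>1}\d_j\d_{\bar j}w$ and the normal part $|\z|^{2-2\b}\d_\z\d_{\bar\z}w$ both lying in $\cC^{,\a,\b}$. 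The first membership is the easy step: since $u,v\in\cC^{2,\a,\b}$ we have $\rho^{1-\b}f,\ \rho^{1-\b}g\in\cC^{,\a,\b}_0$, and because $|\z|^{2-2\b}=\rho^{1-\b}\cdot\rho^{1-\b}$ we may write $w=(\rho^{1-\b}f)(\rho^{1-\b}g)$, a product of two $\cC^{,\a,\b}_0$ functions; as $\cC^{,\a,\b}$ is closed under multiplication this gives $w\in\cC^{,\a,\b}_0$.

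For the tangential part I would differentiate $w=|\z|^{2-2\b}fg$ twice in the directions $z_j$, distribute the weight as $\rho^{1-\b}\cdot\rho^{1-\b}$, and recognise each of the four resulting terms as a product of two normalised derivatives. For instance $|\z|^{2-2\b}(\d_j\d_{\bar j}f)g=(\rho^{1-\b}\d_j\d_{\bar j}\d_\z\d_{\bar l}u)(\rho^{1-\b}g)$, whose first factor is controlled by the first estimate of Lemma \ref{lem-4th} and whose second factor lies in $\cC^{,\a,\b}_0$; the mixed terms $|\z|^{2-2\b}(\d_jf)(\d_{\bar j}g)$ and $|\z|^{2-2\b}(\d_{\bar j}f)(\d_jg)$ are handled identically using Lemma \ref{lem-3rd}. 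Hence $\sum_{j>1}\d_j\d_{\bar j}w\in\cC^{,\a,\b}$.

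The heart of the argument is the normal part. Here I would exploit the operator $\d^{\phi}$ and its basic identity $\d_\z(|\z|^{2-2\b}\,\cdot\,)=|\z|^{2-2\b}\d^{\phi}_\z$ (and its conjugate) to rewrite
\begin{equation*}
|\z|^{2-2\b}\d_\z\d_{\bar\z}w=|\z|^{4-4\b}\,\d^{\phi}_\z\d^{\phi}_{\bar\z}(fg),
\end{equation*}
and then expand the right-hand side by the Leibniz rule into the four terms $(\d^{\phi}_\z\d_{\bar\z}f)g$, $(\d_{\bar\z}f)(\d_\z g)$, $(\d^{\phi}_\z f)(\d^{\phi}_{\bar\z}g)$ and $f\,\d_\z\d^{\phi}_{\bar\z}g$, each weighted by $|\z|^{4-4\b}$. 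The decisive observation is that the two auxiliary functions $H:=|\z|^{2-2\b}\d_\z\d_{\bar\z}u$ and $\tilde{H}:=|\z|^{2-2\b}\d_\z\d_{\bar\z}v$ lie in $\cC^{2,\a,\b}$ by the very definition of $\cC^{4,\a,\b}$, and that $|\z|^{2-2\b}\d_{\bar\z}f=\d_{\bar l}H$ and $|\z|^{2-2\b}\d_\z g=\d_k\tilde{H}$. Feeding these identities through the $\d^{\phi}$-calculus collapses the first, second and fourth terms into products such as $(\rho^{1-\b}\d_\z\d_{\bar l}H)(\rho^{1-\b}g)$, $(\d_{\bar l}H)(\d_k\tilde{H})$ and $(\rho^{1-\b}f)(\rho^{1-\b}\d_k\d_{\bar\z}\tilde{H})$, all of which are products of genuine $\cC^{,\a,\b}$ functions coming from the $\cC^{2,\a,\b}$-regularity of $H$ and $\tilde{H}$.

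The main obstacle is the remaining cross term $|\z|^{4-4\b}(\d^{\phi}_\z f)(\d^{\phi}_{\bar\z}g)$. Writing $F:=|\z|^{2-2\b}f$ and $G:=|\z|^{2-2\b}g$, both in $\cC^{2,\a,\b}$ by Lemma \ref{lem-3partial}, this term equals $(\d_\z F)(\d_{\bar\z}G)$; each factor is individually singular, and neither $F$ nor $G$ is differentiable in the normal direction within $\cC^{,\a,\b}$ by the generic $\cC^{2,\a,\b}$ bounds alone (these give only $\rho^{1-\b}\d_\z F\in\cC^{,\a,\b}_0$, whence $\d_\z F$ blows up under the small angle condition). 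This is exactly where the small angle condition must be used: the first estimate of Lemma \ref{lem-3rd-1} (and its conjugate) furnishes the refined decay $\rho^{2-2\b}\d^{\phi}_\z f=\d_\z F=O(\rho^{\a\b})$ and $\rho^{2-2\b}\d^{\phi}_{\bar\z}g=\d_{\bar\z}G=O(\rho^{\a\b})$, so that their product vanishes like $\rho^{2\a\b}$. I expect the careful bookkeeping of this term, and the verification that the refined estimate indeed upgrades to $\cC^{,\a,\b}$ membership rather than a mere growth bound, to be the delicate point; once it is in place, $\Delta_{\Omega_0}w\in\cC^{,\a,\b}$ and Donaldson's estimate yields $w\in\cC^{2,\a,\b}$, with the special case $v=u$ immediate.
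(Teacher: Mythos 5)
Your proposal is correct and follows essentially the same route as the paper: reduce to $\Delta_{\Omega_0}\Phi\in\cC^{,\a,\b}$ via Donaldson's estimate, expand the tangential and normal Laplacians by Leibniz, and identify the same four normal-direction terms (the paper's equation (\ref{normal})), controlling them with Lemmas (\ref{lem-3rd}), (\ref{lem-3rd-1}) and (\ref{lem-4th}). The cross term $|\z|^{4-4\b}(\d^{\phi}_\z f)(\d^{\phi}_{\bar\z}g)$ that you flag as delicate is handled in the paper by exactly the decay you propose from Lemma (\ref{lem-3rd-1}), and the paper is no more explicit than you are about upgrading the $O(\rho^{\a\b})$ growth bound to genuine $\cC^{,\a,\b}$ membership.
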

\begin{proof}
Put 
$$ \Phi: = |\z|^{2-2\b} (\d_{\z}\d_{\bar l} u)( \d_k\d_{\bar\z}v),$$
and then it is enough to prove $\Delta_{\Omega_0} \Phi $ is of class $\cC^{,\a,\b}$. But on the tangential directions, we have 
\begin{eqnarray}
\label{tag}
\d_j\d_{\bar j} \Phi & = & |\z|^{2-2\b} \{ ( \d_j\d_{\bar j}\d_\z\d_{\bar l}u\cdot \d_k\d_{\bar\z} v + \d_j\d_{\bar{l}}\d_{\z} u\cdot \d_{\bar j}\d_k \d_{\bar\z}v) 
\nonumber\\
& + &  (\d_{\bar j}\d_\z \d_{\bar l}u \cdot \d_j\d_{\bar\z}\d_k v + \d_j\d_{\bar j}\d_k\d_{\bar\z} v\cdot \d_\z\d_{\bar l} u ) \}.
\end{eqnarray}

On the normal direction to the divisor, we have 
\begin{eqnarray}
\label{normal}
|\z|^{2-2\b} \d_{\z}\d_{\bar\z} \Phi &=& |\z|^{4-4\b} \{ (\d_{\bar l}\d_\z\d_{\bar\z} \d_\z u + (1-\b)\z^{-1} \d_{\bar l} \d_{\z} \d_{\bar\z} u)\cdot \d_k\d_{\bar\z}v
\nonumber\\
&+ &  (\d_{k}\d_{\bar\z}\d_{\z} \d_{\bar\z} v + (1-\b) \bar{\z}^{-1} \d_{k} \d_{\bar\z} \d_{\z} v) \cdot \d_\z\d_{\bar l}u
\nonumber\\
&+ & ( \d_{\z}\d_{\bar l}\d_{\z} u + (1-\b)\z^{-1}\d_{\bar l}\d_{\z} u )\cdot ( \d_{\bar\z}\d_{k}\d_{\bar\z} v + (1-\b)\bar\z^{-1}\d_{k}\d_{\bar\z} v ) 
\nonumber\\
&+&  \d_{\bar\z}\d_{\z}\d_{\bar l} u \cdot \d_{\z}\d_{\bar\z}\d_{k} v\}
\end{eqnarray}
Thanks to Lemma (\ref{lem-4th}) and (\ref{lem-3rd}), equation (\ref{tag}) and (\ref{normal}) imply that 
$$\d_j\d_{\bar j} \Phi\in \cC^{,\a,\b}_0,\ \ \ |\z|^{2-2\b} \d_{\z}\d_{\bar\z} \Phi\in \cC^{,\a,\b}_0, $$
and the lemma follows.
\end{proof}

\begin{lemma}
\label{lem-vol2}
Let $\omega$ be a $\cC^{4,\a,\b}$ metric, and then we can estimate the growth rate of its volume form 
$$ |\z|^{2-2\b} \det g \in \cC^{2,\a,\b}.  $$
In particular, $|\z|^{2\b -2}g^{\bar\z\z} \in \cC^{2,\a,\b}$ and $ g^{\bar l k}\in \cC^{2,\a,\b}$ for all $k,l>1$.
\end{lemma}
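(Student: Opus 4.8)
The plan is to read off both $\det g$ and the two inverse entries directly from the cofactor expansion, using the previous lemma to tame the only dangerous contributions. Write $\omega = dd^c u$ locally with $u = \phi_0 + \vp$, where $\phi_0$ is a smooth local potential of $\omega_0$ and $\vp$ is the $\cC^{4,\a,\b}$ potential of the metric; since smooth functions are $\cC^{4,\a,\b}$ under the small angle condition, $u\in\cC^{4,\a,\b}$ and $g_{\mu\bar\nu} = \d_\mu\d_{\bar\nu}u$ up to the usual normalization. From the definition of $\cC^{4,\a,\b}$ I record
\begin{equation*}
g_{k\bar l}\in\cC^{2,\a,\b},\qquad |\z|^{2-2\b}g_{\z\bar\z}\in\cC^{2,\a,\b}\qquad(k,l>1),
\end{equation*}
and from Lemma \ref{lem-vol}, $|\z|^{2-2\b}g_{\z\bar l}\,g_{k\bar\z}\in\cC^{2,\a,\b}$ for all $k,l>1$. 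I will use repeatedly that $\cC^{2,\a,\b}$ is a Banach algebra, so finite products of its elements remain in $\cC^{2,\a,\b}$.

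First I would expand $\det g = \sum_\sigma \mathrm{sgn}(\sigma)\prod_\mu g_{\mu\,\overline{\sigma(\mu)}}$ and split the permutations according to the value $\sigma(\z)$. If $\sigma(\z)=\z$, the corresponding terms assemble into $g_{\z\bar\z}\cdot\det(g_{k\bar l})_{k,l>1}$; multiplying by $|\z|^{2-2\b}$ gives the product $(|\z|^{2-2\b}g_{\z\bar\z})\det(g_{k\bar l})$ of $\cC^{2,\a,\b}$ functions. If instead $\sigma(\z)=l$ for some $l>1$, bijectivity forces a unique $k>1$ with $\sigma(k)=\z$, so such a term carries the factor $g_{\z\bar l}\,g_{k\bar\z}$ alongside tangential factors $g_{m\bar n}$; multiplying by $|\z|^{2-2\b}$ and invoking Lemma \ref{lem-vol} again lands in $\cC^{2,\a,\b}$. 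Since every permutation falls into exactly one case, summation gives $|\z|^{2-2\b}\det g\in\cC^{2,\a,\b}$. The essential point, and the reason Lemma \ref{lem-vol} is indispensable, is that a single mixed factor $g_{\z\bar l}$ (whose weighted version $\rho^{1-\b}g_{\z\bar l}$ lies only in $\cC^{,\a,\b}_0$ and is \emph{not} $\cC^{2,\a,\b}$) never appears alone: the $\z$-row and $\bar\z$-column must each be used exactly once, so any off-diagonal use of the $\z$-row is compensated by an off-diagonal use of the $\bar\z$-column, producing precisely the good product $g_{\z\bar l}\,g_{k\bar\z}$.

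Next I would show that $|\z|^{2-2\b}\det g$ is bounded below by a positive constant. From the quasi-isometry $C^{-1}\Omega_0\le\omega\le C\Omega_0$ the eigenvalues of $g$ relative to $\Omega_0$ lie in $[C^{-1},C]$, so $\det g$ is comparable to $\det\Omega_0 = \b^2|\z|^{2\b-2}$, whence $|\z|^{2-2\b}\det g$ is pinched between two positive constants; its reciprocal is therefore again of class $\cC^{2,\a,\b}$. Finally I would apply Cramer's rule. The cofactor of $g_{\z\bar\z}$ is exactly $\det(g_{k\bar l})_{k,l>1}\in\cC^{2,\a,\b}$, so
\begin{equation*}
|\z|^{2\b-2}g^{\bar\z\z} = \frac{\det(g_{k\bar l})_{k,l>1}}{|\z|^{2-2\b}\det g}\in\cC^{2,\a,\b}.
\end{equation*}
For $k,l>1$ the cofactor of $g_{k\bar l}$ is a minor retaining the $\z$-row and $\bar\z$-column, so running the same two-case expansion (normal-diagonal versus paired-mixed) shows $|\z|^{2-2\b}\,(\text{cofactor})\in\cC^{2,\a,\b}$, and dividing by $|\z|^{2-2\b}\det g$ gives $g^{\bar l k}\in\cC^{2,\a,\b}$.

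The main obstacle is precisely the control of the mixed entries $g_{\z\bar l}$, which individually fail to be $\cC^{2,\a,\b}$; the whole argument hinges on the combinatorial observation that in every determinant and cofactor term they occur paired with $g_{k\bar\z}$, so that Lemma \ref{lem-vol} applies. Everything else — the algebra property, Hölder-regularity of the reciprocal of a function bounded away from zero, and Cramer's rule — is routine once this pairing is in hand.
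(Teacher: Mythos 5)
Your proposal is correct and follows essentially the same route as the paper: the paper expands $\det g$ via the Schur-complement formula $\det g = (g_{\z\bar\z} - \sum_{k,l>1}(G_{\z\bar\z})^{\bar lk}g_{\z\bar l}g_{k\bar\z})\det G_{\z\bar\z}$, which isolates exactly the paired mixed entries $g_{\z\bar l}g_{k\bar\z}$ that you identify combinatorially in the Leibniz expansion, and then applies Lemma \ref{lem-vol}, the nondegeneracy lower bound, and Cramer's rule just as you do. The two determinant expansions are interchangeable here, and your explicit remark that the pairing of the $\z$-row and $\bar\z$-column is the crux is precisely the point the paper's formula encodes.
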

\begin{proof}
Thanks to Lemma (\ref{lem-vol}), the estimate follows from the expansion formula of the volume form $\omega^n$, i.e.
\begin{equation}
\label{inverse}
\det g = (g_{\z\bar{\z}} - \sum_{k,l>1} (G_{\z\bar{\z}})^{\bar{l} k} g_{\z\bar{l}}g_{k\bar{\z}} ) \det G_{\z\bar{\z}},
\end{equation}
where $G_{\z\bar{\z}}$ is the complimentary matrix of $g$ with respect to $g_{\z\bar{\z}}$, and $\det G_{\z\bar\z}\in\cC^{2,\a,\b}$. 
Notice the conic metric $g$ is non-degenerate, and then we can assume $|\z|^{2-2\b} \det g>1$ and $\det G_{\z\bar\z} >1$ locally.
Therefore, the inverse matrix of $g$ can be expressed as 
$$ |\z|^{2\b-2}g^{\bar\z\z} = \frac{\det G_{\z\bar\z}}{|\z|^{2-2\b} \det g} \in\cC^{2,\a,\b}.  $$
For the tangential directions, we have 
$$ g^{\bar l k} = \frac{|\z|^{2-2\b}\det G_{k\bar l}}{|\z|^{2-2\b}\det g} \in \cC^{2,\a,\b}, $$
since the numerator $|\z|^{2-2\b}\det G_{k\bar l}$ is of class $\cC^{2,\a,\b}$ by the same reason with $|\z|^{2-2\b}\det g$.
\end{proof}

Take the Ricci potential of $\cC^{4,\a,\b}$ conic K\"ahler metric $\omega$ as 
$$\phi: = -\log \frac{|s|^{2-2\b}_h \omega^n}{\omega_0^n}. $$ 
Then we can estimate the growth rate of the Ricci curvature of $\omega$ near the divisor.
\begin{corollary}
\label{cor-ric}
We have
$$ \rho^{2-2\b}R_{\z\bar{\z}} \in \cC^{, \a,\b},\ \rho^{1-\b}R_{\z\bar l}\in \cC^{,\a,\b}_0,\ \rho^{1-\b} R_{k\bar\z} \in \cC^{,\a,\b}_0,\ R_{k\bar l}\in \cC^{,\a,\b}. $$
\end{corollary}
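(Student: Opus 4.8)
The plan is to reduce all four estimates to the single statement $\log F\in\cC^{2,\a,\b}$, where $F:=|\z|^{2-2\b}\det g$, and then to read the stated growth rates directly off the definition of a $(1,1)$-form of class $\cC^{,\a,\b}$. I would use that on $X-D$ the Ricci tensor is given in the $z$-coordinate by the standard formula $R_{\mu\bar\nu}=-\d_\mu\d_{\bar\nu}\log\det g$. Writing
$$\log\det g=\log F-(1-\b)\log|\z|^2,$$
I would observe that the logarithmic pole is pluriharmonic away from the divisor: since $\log|\z|^2=\log\z+\log\bar\z$ splits into a holomorphic and an antiholomorphic summand, $\d_\mu\d_{\bar\nu}\log|\z|^2=0$ on $X-D$. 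Hence
$$R_{\mu\bar\nu}=-\d_\mu\d_{\bar\nu}\log F\qquad\text{on }X-D,$$
which, up to the mixed derivatives of the smooth weight of $h$ and of $\det g_0$ (both $\cC^{4,\a,\b}$ under the small angle condition), coincides with $\d_\mu\d_{\bar\nu}\phi$ for the Ricci potential $\phi$. The four asserted bounds are exactly the defining conditions for $\d\dbar(\log F)$ to be a $(1,1)$-form of class $\cC^{,\a,\b}$, so the corollary follows once $\log F\in\cC^{2,\a,\b}$ is known.

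To prove the latter I would start from Lemma \ref{lem-vol2}, which gives $F\in\cC^{2,\a,\b}$; moreover the quasi-isometry $C^{-1}\Omega_0\le\omega\le C\Omega_0$ forces $F$ to be bounded below by a positive constant locally. Since $\cC^{,\a,\b}$ is a Banach algebra closed under composition with smooth functions of a quantity bounded away from $0$ (inherited from the $C^\a$-algebra structure in the $w$-coordinate), we get $\log F,\,1/F,\,1/F^2\in\cC^{,\a,\b}$, hence $\log F\in\cC^{,\a,\b}$. For the derivatives I would use $\d_\mu\log F=(\d_\mu F)/F$ together with
$$\d_\mu\d_{\bar\nu}\log F=\frac{\d_\mu\d_{\bar\nu}F}{F}-\frac{(\d_\mu F)(\d_{\bar\nu}F)}{F^2},$$
and then track weights component by component: the factors $\rho^{1-\b}\d_\z F\in\cC^{,\a,\b}_0$ and $\rho^{2-2\b}\d_\z\d_{\bar\z}F\in\cC^{,\a,\b}$ supplied by $F\in\cC^{2,\a,\b}$ recombine under the quotient rule to place $\d(\log F)$ and $\d\dbar(\log F)$ in the spaces required of $\cC^{,\a,\b}$ one- and two-forms, using that multiplication by $1/F\in\cC^{,\a,\b}$ preserves membership and that a product of two factors each vanishing on $D$ stays in $\cC^{,\a,\b}_0$.

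The main difficulty here is bookkeeping rather than conceptual: I must check case by case that $\cC^{,\a,\b}_0\cdot\cC^{,\a,\b}\subset\cC^{,\a,\b}_0$ and that the two weighted normal-direction factors in the quadratic term carry the correct total power of $\rho$, so that each of $R_{\z\bar\z},R_{\z\bar l},R_{k\bar\z},R_{k\bar l}$ acquires precisely the stated weight (the last two being conjugate to one another by reality of $F$). The single structural input, without which the off-diagonal normal weights would come out wrong, is the cancellation of the pole $(1-\b)\log|\z|^2$ in the Ricci tensor on $X-D$; beyond that the argument is just the Hölder algebra applied to Lemma \ref{lem-vol2}.
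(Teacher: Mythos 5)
Your argument is correct and is essentially the paper's own proof: both reduce the corollary to showing that $i\ddbar$ of the logarithm of $F=|\z|^{2-2\b}\det g$ (equivalently, of the Ricci potential $e^{-\phi}=h\cdot F$ with $h$ smooth positive) is a $(1,1)$-form of class $\cC^{,\a,\b}$, invoking Lemma \ref{lem-vol2} for $F\in\cC^{2,\a,\b}$ together with the lower bound from quasi-isometry. You merely spell out the quotient-rule bookkeeping that the paper leaves implicit.
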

\begin{proof}
It is enough to prove $\ddbar \phi $ is of class $\cC^{,\a,\b}$ as a $(1,1)$ form,
and this is true if $e^{-\phi} \in \cC^{2,\a,\b}$ provided the non-degeneracy of the metric. But $e^{-\phi}$ is equal to $  h ( |\z|^{2-2\b} \det g ) $ locally,
where $h>0$ is a smooth function. Hence the corollary follows from Lemma (\ref{lem-vol2}). 
\end{proof}

\begin{lemma}
\label{lem-tangent}
For each fixed $j>1$, the curvature tensor 
$$ R_{\mu\bar\b j\bar j} $$ is cone admissible,
for all indices $\mu$ and $\b$. 
\end{lemma}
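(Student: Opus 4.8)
\smallskip
The plan is to read $R_{\mu\bar\nu j\bar j}$ off the standard K\"ahler curvature identity and to reduce cone admissibility to that of its two constituent pieces (here I rename the second index of the statement to $\bar\nu$ to avoid a clash with the cone angle $\b$, so $\mu,\nu$ range over all of $\z,2,\dots,n$ while $j>1$ is fixed). Writing the local potential $\psi=\phi_0+\vp\in\cC^{4,\a,\b}$, the curvature tensor of $\omega$ is
\[
R_{\mu\bar\nu j\bar j} = -\,\d_\mu\d_{\bar\nu} g_{j\bar j} \;+\; \sum_{p,t} g_{p\bar t}\,\Gamma^{p}_{\mu j}\,\overline{\Gamma^{t}_{\nu j}},
\]
where $\Gamma^{p}_{\mu j}$ are the Christoffel symbols of $\omega$. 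I would first record two structural facts that I will use repeatedly: cone admissible tensors (Definition \ref{def-tensor}) are closed under tensor products, with the $\z$-weights simply adding, and closed under complex conjugation, since $\cC^{,\a,\b}$ is a conjugation-invariant algebra (it is $C^\a$ in the $w$-coordinate). Thus it suffices to check that each of the two terms above carries the $\z$-weight prescribed for $R_{\mu\bar\nu j\bar j}$, namely one factor of $(1-\b)$ for each of $\mu,\nu$ equal to $\z$.

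For the first term I would observe that $g_{j\bar j}=\d_j\d_{\bar j}\psi$ is a pure tangential Hessian of a $\cC^{4,\a,\b}$ function ($j>1$), hence $g_{j\bar j}\in\cC^{2,\a,\b}$ directly from the definition of the space (Definition after Proposition \ref{prop-C4}). Consequently $\ddbar g_{j\bar j}$ is a $\cC^{,\a,\b}$ $(1,1)$-form, which is exactly the assertion that $\rho^{2-2\b}\d_\z\d_{\bar\z}g_{j\bar j}$, $\rho^{1-\b}\d_\z\d_{\bar l}g_{j\bar j}$, $\rho^{1-\b}\d_k\d_{\bar\z}g_{j\bar j}$ and $\d_k\d_{\bar l}g_{j\bar j}$ all lie in $\cC^{,\a,\b}$. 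These are precisely the four weights demanded for $\d_\mu\d_{\bar\nu}g_{j\bar j}$ in the four cases $\mu,\nu\in\{\z,\text{tangential}\}$, so the first term is cone admissible term by term.

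For the second term the key point is a cancellation of weights along the contracted indices. Each summand $g_{p\bar t}\,\Gamma^{p}_{\mu j}\,\overline{\Gamma^{t}_{\nu j}}$ is a product of three cone admissible tensors: $g_{p\bar t}$ is cone admissible because $\omega$ is a $\cC^{2,\a,\b}$ metric (the $(1,1)$-form estimates $\rho^{2-2\b}g_{\z\bar\z}$, $\rho^{1-\b}g_{\z\bar l}$, $g_{k\bar l}\in\cC^{,\a,\b}$), while $\Gamma^{p}_{\mu j}$ is cone admissible by Proposition \ref{prop-chris} together with Remark \ref{rem-chirs}. Crucially, since the repeated indices are tangential ($j>1$), the symbol $\Gamma^{p}_{\mu j}$ is never the exceptional $\Gamma^{\z}_{\z\z}$, so no $\tilde\Gamma$-correction is needed. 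Counting $\z$-type indices, the contracted index $p$ contributes $+(1-\b)$ as a lower index of $g_{p\bar t}$ and $-(1-\b)$ as an upper index of $\Gamma^{p}_{\mu j}$ (and likewise for $t$), so these contributions cancel and the total weight of every summand is exactly $([\mu=\z]+[\nu=\z])(1-\b)$, matching the prescribed weight. Hence the sum is cone admissible.

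The main obstacle, and the only place requiring care, is this weight bookkeeping: one must verify that in all four index regimes for $(\mu,\nu)$ the two terms independently hit the correct power of $\rho$, and in particular that the exceptional Christoffel symbol $\Gamma^{\z}_{\z\z}$ does not enter. This is exactly what the hypothesis $j>1$ guarantees; had the repeated indices been $\z,\bar\z$ one would instead be forced to use the $\tilde\Gamma$ tensor of Definition \ref{def-christ} and the last relation in the model-growth estimates. I expect the rest to be a mechanical, case-by-case confirmation that uses only the algebra property of $\cC^{,\a,\b}$ and the admissibility already established in Proposition \ref{prop-chris} and Lemma \ref{lem-vol2}.
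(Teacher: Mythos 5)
Your proposal is correct and follows essentially the same route as the paper: the paper likewise writes $R_{\mu\bar\b j\bar j}=\d_\mu\d_{\bar\b}g_{j\bar j}-g^{\bar\q\p}\d_\mu g_{j\bar\q}\d_{\bar\b}g_{\p\bar j}$ and observes that $g_{j\bar j}\in\cC^{2,\a,\b}$ while all third-order terms $\d_\mu g_{j\bar\q}$, $\d_{\bar\b}g_{\p\bar j}$ are cone admissible, so your rewriting of the quadratic term via $g_{p\bar t}\Gamma^p_{\mu j}\overline{\Gamma^t_{\nu j}}$ is just an algebraically identical repackaging. Your explicit weight bookkeeping (cancellation along contracted indices, and the observation that $j>1$ excludes the exceptional symbol $\Gamma^{\z}_{\z\z}$) only makes precise what the paper leaves implicit.
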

\begin{proof}
Note for $j>1$, the tensor $g_{j\bar j}$ is of class $\cC^{2,\a,\b}$, and all the third order terms like $\d_\mu g_{j\bar\q}, \d_{\bar\b}g_{\p\bar j}$ are cone admissible. 
Hence the lemma follows from the following 
\begin{equation}
R_{\mu\bar\b j\bar j} = \d_\mu\d_{\bar\b}g_{j\bar j} - g^{\bar\q\p} \d_\mu g_{j\bar\q} \d_{\bar\b} g_{\p\bar j}.
\end{equation}
\end{proof}

\begin{lemma}
\label{lem-normal}
The curvature tensor on the normal direction 
$$ R_{\mu\bar\b\z\bar\z} $$
is cone admissible for all indices $\mu,\b$.
\end{lemma}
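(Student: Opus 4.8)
The plan is to read off $R_{\mu\bar\b\z\bar\z}$ from the K\"ahler curvature formula used in Lemma \ref{lem-tangent} and to reorganize it around the modified Christoffel symbols $\tilde\Gamma$ of Definition \ref{def-christ}, so that the one genuinely singular symbol $\Gamma^\z_{\z\z}$ never survives. Writing $\Gamma^\p_{\mu\z} = g^{\bar\q\p}\d_\mu g_{\z\bar\q}$, the same manipulation that produced the formula in Lemma \ref{lem-tangent} gives
$$ R_{\mu\bar\b\z\bar\z} = g_{\p\bar\z}\,\d_{\bar\b}\Gamma^\p_{\mu\z} = g_{\z\bar\z}\,\d_{\bar\b}\Gamma^\z_{\mu\z} + \sum_{k>1} g_{k\bar\z}\,\d_{\bar\b}\Gamma^k_{\mu\z}. $$
The key observation is that $\Gamma^\z_{\z\z}$ enters only through $\d_{\bar\b}\Gamma^\z_{\mu\z}$, and since $\Gamma^\z_{\z\z} = \tilde\Gamma^\z_{\z\z} - (1-\b)\z^{-1}$ with the subtracted term holomorphic in $\z$, it is killed by $\d_{\bar\b}$. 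Hence $\d_{\bar\b}\Gamma^\p_{\mu\z} = \d_{\bar\b}\tilde\Gamma^\p_{\mu\z}$ in every case, and the curvature is written entirely through the cone admissible tensors $g_{\p\bar\z}$ and $\tilde\Gamma^\p_{\mu\z}$.

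First I would check that the powers of $|\z|$ balance. By Remark \ref{rem-chirs} every $\tilde\Gamma^\p_{\mu\z}$ is cone admissible, while $g_{\z\bar\z}$ and $g_{k\bar\z}$ are cone admissible because $|\z|^{2-2\b}g_{\z\bar\z}\in\cC^{2,\a,\b}$ and $|\z|^{1-\b}g_{k\bar\z}\in\cC^{,\a,\b}_0$. Counting the indices $\z,\bar\z$ in each factor, and recording that the differentiation $\d_{\bar\b}$ in the singular direction raises the required weight by one power of $|\z|^{1-\b}$, one finds that both summands carry exactly the weight prescribed by Definition \ref{def-tensor} for $R_{\mu\bar\b\z\bar\z}$. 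The tangential--tangential component $R_{k\bar l\z\bar\z}$ is then immediate: $\d_k,\d_{\bar l}$ commute with the conformal factor $|\z|^{2\b-2}$, so no singular correction appears and every term is controlled directly by Lemma \ref{lem-4th} and Lemma \ref{lem-vol2}.

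It remains to prove that $\d_{\bar\b}\tilde\Gamma^\p_{\mu\z}$ is cone admissible with the weight just counted, which is a fourth--order estimate on the potential. Differentiating $\tilde\Gamma^\p_{\mu\z}=g^{\bar\q\p}\d_\mu g_{\z\bar\q}$ produces the term $(\d_{\bar\b}g^{\bar\q\p})\,\d_\mu g_{\z\bar\q}$, handled by the inverse--metric estimates of Lemma \ref{lem-vol2} --- extended to the off--diagonal entries $g^{\bar l\z},g^{\bar\z k}$ by the same cofactor argument --- together with Proposition \ref{prop-chris}, and the term $g^{\bar\q\p}\,\d_\mu\d_{\bar\b}g_{\z\bar\q}$, where the genuine fourth derivatives of the potential must be grouped into the shifted combinations supplied by Lemma \ref{lem-4th} and Lemma \ref{lem-3partial}. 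The main obstacle is the all--normal component $R_{\z\bar\z\z\bar\z}$: in the naive expression $\d_\z\d_{\bar\z}g_{\z\bar\z} - g^{\bar\q\p}\d_\z g_{\z\bar\q}\,\d_{\bar\z}g_{\p\bar\z}$ both the first term and the $\p=\q=\z$ part of the sum separately carry a $|\z|^{-2\b}$ singularity once multiplied by the required weight $|\z|^{4-4\b}$, and only their difference is cone admissible. The reorganization above makes this cancellation automatic; equivalently it follows from the identity $\d^{\phi}_\z g_{\z\bar\z} = |\z|^{2\b-2}\d_\z(|\z|^{2-2\b}g_{\z\bar\z})$, which carries no singular correction, together with the nondegeneracy $g^{\bar\z\z}g_{\z\bar\z}\to 1$ recorded in Lemma \ref{lem-vol2}.
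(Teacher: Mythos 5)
Your starting identity $R_{\mu\bar\b\z\bar\z}=g_{\p\bar\z}\,\d_{\bar\b}\Gamma^{\p}_{\mu\z}$ is correct, and so is the key algebraic observation that $\d_{\bar\b}$ annihilates the correction $(1-\b)\z^{-1}$, so that $\d_{\bar\b}\Gamma^{\p}_{\mu\z}=\d_{\bar\b}\tilde\Gamma^{\p}_{\mu\z}$; the tangential case $R_{k\bar l\z\bar\z}$ also goes through. The gap is precisely at the step you flag as the main obstacle: you assert that the reorganization makes the cancellation in $R_{\z\bar\z\z\bar\z}$ ``automatic,'' and it does not. To estimate $\d_{\bar\z}\tilde\Gamma^{\p}_{\z\z}$ you must expand $\tilde\Gamma^{\p}_{\z\z}=g^{\bar\q\p}\d^{\phi}_{\z}g_{\z\bar\q}$ and differentiate. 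The term $(\d_{\bar\z}g^{\bar\q\p})\,\d^{\phi}_{\z}g_{\z\bar\q}$ contains the raw derivative $\d_{\bar\z}g_{\z\bar\z}$, whose leading part $-(1-\b)\bar\z^{-1}g_{\z\bar\z}=O(\rho^{2\b-3})$ is not cone admissible; and the term $g^{\bar\z\z}\d_{\bar\z}\d^{\phi}_{\z}g_{\z\bar\z}$ differs from the controlled quantity $g^{\bar\z\z}\d^{\phi}_{\bar\z}\d^{\phi}_{\z}g_{\z\bar\z}$ by $-(1-\b)\bar\z^{-1}g^{\bar\z\z}\d^{\phi}_{\z}g_{\z\bar\z}$, which after inserting $g_{\z\bar\z}$ and the weight $|\z|^{4-4\b}$ of Definition (\ref{def-tensor}) still diverges like $\rho^{-\b(1-\a)}$. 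These divergent pieces do cancel, but only after expanding $\d_{\bar\z}g^{\bar\q\p}=-g^{\bar\q\mu}g^{\bar\nu\p}\d_{\bar\z}g_{\mu\bar\nu}$, using $g^{\bar\z\z}g_{\z\bar\z}=1-\sum_{l>1}g^{\bar l\z}g_{\z\bar l}$, and tracking the off-diagonal remainders --- that is, exactly the bookkeeping of equations (\ref{R001})--(\ref{R002}), which your write-up does not carry out.

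The paper sidesteps this by never differentiating the inverse metric in the normal direction: it proves the single identity
$$R_{\z\bar\z\z\bar\z}=\d^{\phi}_{\z}\d^{\phi}_{\bar\z}g_{\z\bar\z}-g^{\bar\q\p}\,\d^{\phi}_{\z}g_{\z\bar\q}\,\d^{\phi}_{\bar\z}g_{\p\bar\z},$$
after which the two terms are \emph{separately} cone admissible --- the first because $|\z|^{2-2\b}g_{\z\bar\z}\in\cC^{2,\a,\b}$, the second because it is an undifferentiated $g^{\bar\q\p}$ times two third-order cone admissible tensors from Proposition (\ref{prop-chris}) --- so no residual cancellation is needed at the level of growth estimates. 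To repair your argument, either carry out the cancellation inside $\d_{\bar\z}\tilde\Gamma^{\p}_{\z\z}$ explicitly (it is strictly more work, since the inverse metric gets differentiated), or pass to the paper's $\d^{\phi}$-form of the curvature, whose verification is the subtraction of (\ref{R002}) from (\ref{R001}).
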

\begin{proof}
There are four cases, which correspond to different choices of $\mu$ and $\b$:
\\
\\
\textbf{Case 1}
Let $\mu = \z, \b =\z$, and we have 
$$ R_{\z\bar\z\z\bar\z} = \d_{\z}\d_{\bar\z}g_{\z\bar\z} - g^{\bar\q\p} \d_{\z} g_{\z\bar\q} \d_{\bar\z}g_{\p\bar\z}.$$
However, we further have 
\begin{eqnarray}
\label{R001}
\d^{\phi}_{\z}\d^{\phi}_{\bar\z} g_{\z\bar\z} &=& |\z|^{2\b-2} \d_{\z} \{ \d_{\bar\z} (|\z|^{2-2\b} g_{\z\bar\z}) \}
\nonumber\\
& =& \d_{\z}\d_{\bar\z}g_{\z\bar\z} + (1-\b)\z^{-1} \d_{\bar\z} g_{\z\bar\z} 
\nonumber\\
&+& (1-\b) \bar\z^{-1}\d_\z g_{\z\bar\z} + (1-\b)^2 |\z|^{-2}g_{\z\bar\z}.
\end{eqnarray}
Moreover, 
\begin{eqnarray}
\label{R002}
g^{\bar\q\p} \d^{\phi}_{\z} g_{\z\bar\q}\d^{\phi}_{\bar\z}g_{\p\bar\z} &=& 
g^{\bar\q\p}(\d_\z g_{\z\bar\q} + (1-\b)\z^{-1}g_{\z\bar\q} ) (\d_{\bar\z} g_{\p\bar\z} + (1-\b) \bar\z^{-1} g_{\p\bar\z})
\nonumber\\
&=& g^{\bar\q\p} \d_\z g_{\z\bar\q} \d_{\bar\z}g_{\p\bar\z} + (1-\b)\z^{-1} \delta_{\p\z} \d_{\bar\z}g_{\p\bar\z}
\nonumber\\
&+& (1-\b) \bar\z^{-1} \delta_{\q\z} \d_{\z}g_{\z\bar\q} + (1-\b)^2 |\z|^{-2} \delta_{\p\z}g_{\p\bar\z}
\nonumber\\
&= & g^{\bar\q\p} \d_\z g_{\z\bar\q} \d_{\bar\z}g_{\p\bar\z} + (1-\b)\z^{-1} \d_{\bar\z} g_{\z\bar\z} 
\nonumber\\
&+& (1-\b) \bar\z^{-1}\d_\z g_{\z\bar\z} + (1-\b)^2 |\z|^{-2}g_{\z\bar\z}.
\end{eqnarray}
Subtract equation (\ref{R002}) from (\ref{R001}), and then we have  
$$ R_{\z\bar\z\z\bar\z} =  \d^{\phi}_{\z}\d^{\phi}_{\bar\z}g_{\z\bar\z} - g^{\bar\q\p} \d^{\phi}_{\z} g_{\z\bar\q} \d^{\phi}_{\bar\z}g_{\p\bar\z}.$$
Note $\d^{\phi}_{\z}\d^{\phi}_{\bar\z}g_{\z\bar\z}$ is cone admissible since $|\z|^{2-2\b} g_{\z\bar\z}$ is of class $\cC^{2,\a,\b}$, 
and all the third order terms $\d^{\phi}_{\z} g_{\z\bar\q}, \d^{\phi}_{\bar\z}g_{\p\bar\z} $ are cone admissible by Proposition (\ref{prop-chris}). 
\\
\\
\textbf{Case 2}
Let $\mu = k>1, \b =\z$, and we have
\begin{eqnarray}
\label{R003}
R_{k\bar\z\z\bar\z} &=& \d_k\d_{\bar\z}g_{\z\bar\z} - g^{\bar\q\p} \d_k g_{\z\bar\q} \d_{\bar\z} g_{\p\bar\z}
\nonumber\\
& =  & \d_k\d^{\phi}_{\bar\z}g_{\z\bar\z} - g^{\bar\q\p} \d_k g_{\z\bar\q} \d^{\phi}_{\bar\z} g_{\p\bar\z},
\end{eqnarray}
by a similar computation as in Case 1. Now $\d_k\d^{\phi}_{\bar\z}g_{\z\bar\z}$ is cone admissible since $|\z|^{2-2\b}g_{\z\bar\z}\in\cC^{2,\a,\b}$,
and the remaining third order terms are all cone admissible too. 
\\
\\
\textbf{Case 3}
Let $\mu = \z, \b= l>1$.
This case is similar to Case 2, so we do not repeat the argument here.
\\
\\
\textbf{Case 4}
Let $\mu = k, \b = l >1$, and we have
$$ R_{k\bar l \z\bar\z} = R_{\z\bar\z k\bar l} = \d_\z\d_{\bar\z} g_{k\bar l} - g^{\bar\q \p}\d_\z g_{k\bar\q}\d_{\bar\z} g_{\p\bar l}, $$
but $g_{k\bar l}$ is of class $\cC^{2,\a,\b}$, and all the third order terms like $\d_\z g_{k\bar\q}, \d_{\bar\z} g_{\p\bar l}$ 
are cone admissible by Proposition (\ref{prop-chris}).
\end{proof}

\begin{rem}
In fact, all the curvature tensors $R_{\nu\bar\lambda\gamma\bar\eta}$ are cone admissible by a similar calculation. 
\end{rem}

\section{conic cscK metrics}

In the first author's previous work \cite{Li}, three different notions of conic cscK metrics are discussed. 
Under the small angle condition, a slightly different notion will be introduced from the view of Monge-Amp\`ere equations.  
We will eventually see that it coincides with the so called \emph{strong conic cscK metrics} in \cite{Li}.  

Recall that $\Omega$ is the model conic K\"ahler metric, and we write 
$$\Omega: = \sum_{\a,\b} h_{\a\bar\b} dz_{\a}\wedge d\bar z_{\b}, $$
in local coordinate charts. All Christoffel symbols of $h$ are denoted by $\cu^{\mu}_{\gamma\a}$.
Now we introduce the following definition.

\begin{defn}
A $\cC^{2,\a,\b}$ conic cscK metric is a conic K\"ahler metric 
$\omega: = \Omega + dd^c u$ with $u\in\cC^{2,\a,\b}$ such that it satisfies the following Monge-Amp\`ere equation on $X-D$
\begin{equation}
\label{ma0}
(\Omega + dd^c u)^n = e^F \Omega^n,
\end{equation}
where the real-valued function $F$ is the solution of the following elliptic equation on $X-D$
\begin{equation}
\label{ma1}
\Delta_{\omega}F = tr_{\omega} Ric(\Omega) - c(\b),
\end{equation}
where $c(\b)$ is a topological constant. 
\end{defn}

Note here we adapt to the normalization 
$$ \int_{X-D} e^F \Omega^n = [\Omega]^n.$$
Based on these elliptic equations, we can lift the regularity of the conic cscK metric as follows.

\begin{theorem}
\label{thm-reg}
Suppose $u\in\cC^{2,\a,\b}$ is the potential of a conic cscK metric $\omega$. Then $u$ is of class $\cC^{4,\a,\b}$ under the small angle condition. 
\end{theorem}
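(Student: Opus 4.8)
The plan is to bootstrap the regularity of $u$ from $\cC^{2,\a,\b}$ using the two equations (\ref{ma0}) and (\ref{ma1}) together with Donaldson's Schauder estimate \cite{Don12} for the model Laplacian $\Delta_{\Omega_0}$. Away from $D$ the metric $\omega$ is a genuine smooth cscK metric, so interior elliptic theory gives $u\in C^\infty(X-D)$ and the entire problem is to control the growth of the higher derivatives of $u$ near the divisor. By Proposition \ref{prop-C4} it is enough to prove $\Delta_{\Omega_0}u\in\cC^{2,\a,\b}$; since $\Delta_{\Omega_0}u=\b^{-2}|\z|^{2-2\b}\d_\z\d_{\bar\z}u+\sum_{j>1}\d_j\d_{\bar j}u$, I split the target into the tangential part $\d_k\d_{\bar l}u$ and the normal part $|\z|^{2-2\b}\d_\z\d_{\bar\z}u$.

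First I would establish $F\in\cC^{2,\a,\b}$. From the Monge--Amp\`ere equation (\ref{ma0}) one has $e^F=\det g/\det h$; running the cofactor expansion (\ref{inverse}) in the $\cC^{2,\a,\b}$-analogue of Lemma \ref{lem-vol2} (the products are of class $\cC^{,\a,\b}$ because $\rho^{1-\b}g_{\z\bar l},\ \rho^{1-\b}g_{k\bar\z}\in\cC^{,\a,\b}$) shows $\rho^{2-2\b}\det g\in\cC^{,\a,\b}$, hence $F\in\cC^{,\a,\b}$. The right-hand side $\tr_\omega Ric(\Omega)-c(\b)$ of (\ref{ma1}) lies in $\cC^{,\a,\b}$, because $\Omega$ is a $\cC^{4,\a,\b}$ model metric (Lemma \ref{lem-model}) so its Ricci tensor is cone admissible, and contracting with the controlled inverse $g^{\a\bar\b}$ preserves $\cC^{,\a,\b}$. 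Viewing $\Delta_\omega$ as a $\cC^{,\a,\b}$-perturbation of $\Delta_{\Omega_0}$ (legitimate by the quasi-isometry $C^{-1}\Omega_0\le\omega\le C\Omega_0$), Donaldson's estimate upgrades $F$ to $\cC^{2,\a,\b}$.

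Next comes the tangential bootstrap, by differentiating (\ref{ma0}), written as $\log\det g=F+\log\det h$, in the divisor directions. Applying $\d_k$ gives $\Delta_\omega(\d_k u)=\d_k F+(\text{terms in }g^{\a\bar\b}\text{ and }\d h)$, whose right-hand side is in $\cC^{,\a,\b}$ once $F\in\cC^{2,\a,\b}$, so Donaldson's estimate yields $\d_k u\in\cC^{2,\a,\b}$; this is exactly the third-order information of Lemma \ref{lem-3rd} for the cscK potential. Applying $\d_{\bar l}$ once more produces $\Delta_\omega(\d_k\d_{\bar l}u)=\d_k\d_{\bar l}F-g^{\bar\b\mu}g^{\bar\q\a}\d_{\bar l}g_{\mu\bar\q}\,\d_k g_{\a\bar\b}+\cdots$. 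The decisive observation is that the quadratic terms are built from first derivatives of the metric, i.e.\ third derivatives of $u$, which the previous step now controls with their correct weights; a weight count in the spirit of Brendle \cite{Br} shows each product is of class $\cC^{,\a,\b}$, so a further application of Donaldson's estimate gives $\d_k\d_{\bar l}u\in\cC^{2,\a,\b}$, and therefore $\sum_{j>1}\d_j\d_{\bar j}u\in\cC^{2,\a,\b}$.

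It remains to control the normal second derivative $|\z|^{2-2\b}\d_\z\d_{\bar\z}u$, and this is where I expect the main obstacle. From (\ref{ma0}) and $F\in\cC^{2,\a,\b}$ we now have $\rho^{2-2\b}\det g\in\cC^{2,\a,\b}$, and solving the cofactor identity (\ref{inverse}) for $g_{\z\bar\z}$ expresses $|\z|^{2-2\b}\d_\z\d_{\bar\z}u$ through the already-regular tangential block and the mixed product $|\z|^{2-2\b}g_{\z\bar l}g_{k\bar\z}$. The difficulty, flagged in the Remark after Lemma \ref{lem-model}, is that the bare mixed derivative $\rho^{1-\b}\d_\z\d_{\bar l}u$ is \emph{not} of class $\cC^{2,\a,\b}$: only the particular product entering the determinant is, and proving $|\z|^{2-2\b}(\d_\z\d_{\bar l}u)(\d_k\d_{\bar\z}u)\in\cC^{2,\a,\b}$ (the content of Lemma \ref{lem-vol}, which here must be re-derived directly for the potential) demands weighted normal-direction estimates for the third and fourth derivatives of $u$. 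These I would obtain by Brendle's trick, differentiating (\ref{ma0}) in the normal direction through the twisted operator $\d^\phi_\z=\d_\z+(1-\b)\z^{-1}$ so that the singular $\z^{-1}$-factors cancel against the weight; here the small angle condition is indispensable, since it supplies the improved growth $\rho^{1-2\b}(\d u)_\z\in\cC^{,\a,\b}$ that makes the $\z$-derivatives summable against the weights. Once $|\z|^{2-2\b}(\d_\z\d_{\bar l}u)(\d_k\d_{\bar\z}u)\in\cC^{2,\a,\b}$ is secured, the determinant identity delivers $|\z|^{2-2\b}\d_\z\d_{\bar\z}u\in\cC^{2,\a,\b}$; combined with the tangential part this gives $\Delta_{\Omega_0}u\in\cC^{2,\a,\b}$, and Proposition \ref{prop-C4} concludes $u\in\cC^{4,\a,\b}$. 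The hard step is thus the normal block, precisely because of the non-regularity of the bare mixed derivative and the singular weight in the $\z$-direction, which force the use of the $\d^\phi$-formalism and the small angle hypothesis.
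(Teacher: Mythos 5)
Your treatment of $F$ (getting $F\in\cC^{,\a,\b}$ from the determinant expansion of (\ref{ma0}) and then $F\in\cC^{2,\a,\b}$ from (\ref{ma1}) via Donaldson's estimate) and of the tangential block (differentiating $\log\det g = F+\log\det h$ once to get $\d_k u\in\cC^{2,\a,\b}$, then once more so that the quadratic terms are controlled third-order tensors) coincides with the paper's argument; note that your second tangential differentiation is literally the Ricci identity $R_{k\bar l}=\Delta_\omega g_{k\bar l}-g^{\bar\b\a}g^{\bar\nu\mu}\d_k g_{\a\bar\nu}\d_{\bar l}g_{\mu\bar\b}$ that the paper uses in Lemma \ref{lem-tang-2}.

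The normal block, however, has a genuine gap. You propose to recover $|\z|^{2-2\b}g_{\z\bar\z}$ from the cofactor identity (\ref{inverse}), which requires showing that the mixed product $|\z|^{2-2\b}(\d_\z\d_{\bar l}u)(\d_k\d_{\bar\z}u)$ is of class $\cC^{2,\a,\b}$ \emph{before} knowing $|\z|^{2-2\b}\d_\z\d_{\bar\z}u\in\cC^{2,\a,\b}$. But the only available proof of that product estimate (Lemma \ref{lem-vol}) goes through Lemma \ref{lem-4th}, whose starting point is precisely that $|\z|^{2-2\b}\d_\z\d_{\bar\z}u$ and $\d_i\d_{\bar j}u$ are already $\cC^{2,\a,\b}$ --- i.e.\ the conclusion you are trying to establish. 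The weighted fourth-order quantities entering $\Delta_{\Omega_0}\Phi$ in (\ref{normal}), such as $|\z|^{4-4\b}\bigl(\d_{\bar l}\d_\z\d_{\bar\z}\d_\z u+(1-\b)\z^{-1}\d_{\bar l}\d_\z\d_{\bar\z}u\bigr)$, cannot be extracted from a single normal differentiation of the Monge--Amp\`ere equation (which only yields the third-order trace identity $g^{\bar\b\a}u_{,\z\a\bar\b}=\d_\z F$ and, after Brendle's trick, the third-order estimates of Lemma \ref{lem-3normal-2}); obtaining them directly would force you to differentiate the nonlinear equation three times and control all resulting terms, a step you do not carry out and which is at least as hard as the theorem. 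The paper avoids this entirely: it never invokes the cofactor identity or the mixed-product lemma at this stage. Instead it differentiates (\ref{ma0}) \emph{twice} in the normal direction, $|\z|^{2-2\b}\nabla_\z\bigl(g^{\bar\b\a}u_{,\bar\z\a\bar\b}\bigr)=|\z|^{2-2\b}\d_\z\d_{\bar\z}F$, expands via the $\nabla^\phi$-formalism as in (\ref{00555}) so that every residual second- and third-order tensor is cone admissible by the already-established model growth, and reads off $\Delta_\omega\bigl(|\z|^{2-2\b}\d_\z\d_{\bar\z}u\bigr)\in\cC^{,\a,\b}$; Donaldson's second-order Schauder estimate applied to this single scalar quantity then closes the argument without any fourth-order input. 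You should replace the determinant step by this direct elliptic bootstrap on $|\z|^{2-2\b}u_{,\z\bar\z}$ itself.
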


Thanks to Lemma (\ref{lem-model}) and Corollary (\ref{cor-ric}), equation (\ref{ma1}) implies that the function $F$ is of class $\cC^{2,\a,\b}$ by Donaldson's estimate.
Writing equation (\ref{ma0}) as 
$$Ric(\omega) = Ric (\Omega) - dd^cF, $$
we conclude that the Ricci curvature of $\omega$ is cone admissible. 
Now we claim the conic cscK metric $\omega$ has the model growth in the 3rd. order,
and the proof is basically Brendle's computation.

\begin{lemma}
\label{lem-3-2}
We have 
$$ \d_{k}\d_{\bar{l}}\d_{i} u \in \cC^{,\a,\b},$$
$$ \rho^{1-\b} \d_k\d_{\bar{\z}}\d_{i} u \in \cC_0^{,\a,\b},$$
$$ \rho^{1-\b} \d_{\z}\d_{\bar{l}}\d_k u \in\cC_0^{,\a,\b},$$
$$ \rho^{2-2\b} \d_{\z}\d_{\bar{\z}}\d_k u \in \cC^{,\a,\b}, $$
$$ \rho^{2-2\b} \d^{\phi}_{\z}\d_{\bar{l}}\d_{\z} u = O(\rho^{\a\b}), $$
for all $k,l, i >1$. 
\end{lemma}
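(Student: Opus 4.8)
The plan is to bootstrap regularity from the Monge-Amp\`ere equation (\ref{ma0}), exploiting that $F\in\cC^{2,\a,\b}$, which follows from (\ref{ma1}) together with Lemma (\ref{lem-model}) and Corollary (\ref{cor-ric}). Writing (\ref{ma0}) in logarithmic form as $\log\det g_{\a\bar\b}=F+\log\det h_{\a\bar\b}$, where $h_{\a\bar\b}$ are the coefficients of the model metric $\Omega$ and $g_{\a\bar\b}=h_{\a\bar\b}+u_{\a\bar\b}$, I first note that $g$ is quasi-isometric to $\Omega_0$ and cone admissible, since $u\in\cC^{2,\a,\b}$ and $\Omega\in\cC^{4,\a,\b}$ by Lemma (\ref{lem-model}); in particular its inverse $g^{\bar\b\a}$ is cone admissible, with the $\z$-weights appearing in the discussion of Lemma (\ref{lem-vol2}). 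The five estimates split naturally into two groups: the first four each carry one tangential derivative and will be obtained by lifting the regularity of the tangential first derivatives $\d_k u$; the fifth is purely normal and will require Brendle's refined computation in the singular direction.

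For the first group I would differentiate the logarithmic equation by a tangential $\d_i$ ($i>1$). Since ordinary partials commute, $\d_i u_{\a\bar\b}=\d_\a\d_{\bar\b}(\d_i u)$, so $v:=\d_i u$ satisfies the linear elliptic equation
\[
g^{\bar\b\a}\d_\a\d_{\bar\b} v = \d_i F + (h^{\bar\b\a}-g^{\bar\b\a})\,\d_i h_{\a\bar\b}.
\]
The right-hand side is of class $\cC^{,\a,\b}$: the term $\d_i F$ lies in $\cC^{,\a,\b}$ because $F\in\cC^{2,\a,\b}$, while in the remaining term the singular weights of $g^{\bar\b\a}$ and $h^{\bar\b\a}$ cancel against those of the cone-admissible $\d_i h_{\a\bar\b}$. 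Expanding the operator, the leading coefficient $g^{\bar\z\z}$ carries the weight $|\z|^{2\b-2}$ matching $\Delta_{\Omega_0}$ in (\ref{new3}), the tangential coefficients are close to the identity, and the mixed coefficients vanish along $D$ at rate $\rho^{1-\b}$; hence $g^{\bar\b\a}\d_\a\d_{\bar\b}$ is a $\cC^{,\a,\b}$-perturbation of the model conic Laplacian. Since $\d_i u\in\cC^{,\a,\b}$ already, being the tangential component of $\d u\in\cC^{,\a,\b}$, Donaldson's Schauder estimate \cite{Don12} upgrades $v=\d_i u$ to $\cC^{2,\a,\b}$. Reading off the four components of the $(1,1)$-form $\ddbar(\d_i u)\in\cC^{,\a,\b}$ then yields precisely the first four estimates, with $i=k$.

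The last estimate is the main obstacle, since it involves two holomorphic normal derivatives and so lies outside the $(1,1)$-part $\ddbar(\d_k u)$ controlled above. Here I would differentiate the logarithmic equation in the singular direction $\d_\z$, producing a linear elliptic equation for the genuinely singular function $w:=\d_\z u$. Its second derivative of mixed type $(\z,\bar l)$, renormalized by $\d^{\phi}_\z$, is exactly the combination in the statement, namely $\d^{\phi}_\z\d_{\bar l}\d_\z u = \d_\z\d_{\bar l}\d_\z u + (1-\b)\z^{-1}\d_\z\d_{\bar l}u$, the operator $\d^{\phi}_\z=|\z|^{2\b-2}\d_\z(|\z|^{2-2\b}\,\cdot\,)$ being designed to absorb the $\z^{-1}$ weight carried by the model connection. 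The difficulty is that both $\d_\z F$ and $w=\d_\z u$ are singular---only $\rho^{1-\b}w\in\cC^{,\a,\b}_0$ a priori, sharpened to $\rho^{1-2\b}(\d u)_\z\in\cC^{,\a,\b}$ by the small angle refinement noted in the Remark following the definition of $\cC^{4,\a,\b}$---so the naive Schauder estimate does not close. Instead one tracks the growth term by term as in Brendle's appendix \cite{Br}, and it is precisely the small angle condition $\a\b<1-2\b$ that forces the $O(\rho^{\a\b})$ rate rather than a worse one. This reproduces for the cscK potential $u$ the conclusion of Lemma (\ref{lem-3rd-1}), established there for genuine $\cC^{4,\a,\b}$ functions, the new content being that $u$ is only a priori of class $\cC^{2,\a,\b}$.
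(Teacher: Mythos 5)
For the first four estimates your argument is essentially the paper's own: differentiate the Monge--Amp\`ere equation tangentially to get $\Delta_{\omega}(\d_i u)=\d_i F+tr_{\Omega}(\d_i\Omega)-tr_{\omega}(\d_i\Omega)$ (your right-hand side $(h^{\bar\b\a}-g^{\bar\b\a})\d_i h_{\a\bar\b}$ is the same expression), observe that it is of class $\cC^{,\a,\b}$, and apply Donaldson's Schauder estimate to conclude $\d_i u\in\cC^{2,\a,\b}$; the four estimates are then the components of the $(1,1)$-form $\ddbar(\d_i u)$. This part is correct. (One small imprecision: you cite Lemma \ref{lem-vol2} for the cone admissibility of $g^{\bar\b\a}$, but that lemma is stated for $\cC^{4,\a,\b}$ metrics, whereas at this stage $\omega$ is only known to be $\cC^{2,\a,\b}$; what you actually need, and what holds, is that $g^{\bar\b\a}$ is quasi-isometric to the model and of class $\cC^{,\a,\b}$, which follows from $u\in\cC^{2,\a,\b}$ and the cofactor expansion alone.)

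For the fifth estimate there is a genuine gap. You propose to differentiate the equation in the singular direction $\d_\z$ and ``track the growth term by term,'' but this is never carried out, and the route runs directly into the obstruction you yourself name: $\d_\z F$ and the coefficients are singular, so no Schauder-type argument closes from that equation. The point you are missing is that no further use of the Monge--Amp\`ere equation is needed at all. The fifth quantity equals $\rho^{2-2\b}\bigl(\d_\z\d_\z v+(1-\b)\z^{-1}\d_\z v\bigr)$ with $v=\d_{\bar l}u$, which is exactly the quantity $F(z)$ of Proposition \ref{brendle} in the appendix applied to $v$; and the hypothesis of that proposition, namely that $\d_\z\d_{\bar\z}v$ is $O(\rho^{2\b-2+\a\b})$ with the appropriate H\"older control, is already supplied by the fourth estimate (conjugated), which you have just proved. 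Brendle's Cauchy-integral argument together with the small angle condition $\a\b<1-2\b$ then gives the $O(\rho^{\a\b})$ bound directly --- this is what the paper does. Your instinct that Brendle's appendix and the small angle condition are the decisive ingredients is right, but as written the passage from the $\d_\z$-differentiated equation to the stated bound is an intention, not a proof.
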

\begin{proof}
Differentiating equation (\ref{ma0}) in the tangential direction $z_k$, we have 
$$\Delta_{\omega}(\d_k u) = tr_{\Omega} (\d_k\Omega) + \d_k F - tr_{\omega}(\d_k\Omega). $$
For sufficient small $\a>0$, the $(1,1)$ form $ \d_k\Omega $ is of class $\cC^{,\a,\b}$. 
This implies $ tr_{\Omega} (\d_k \Omega)\in\cC^{,\a,\b}$ and $ tr_{\omega}(\d_k\Omega)\in\cC^{,\a,\b}$, since $\omega$ is uniformly equivalent to $\Omega$ and of class $\cC^{,\a,\b}$. 
Moreover, $\d_k F\in\cC^{,\a,\b}$ implies $\Delta_{\omega} (\d_k u) \in \cC^{,\a,\b}$. Therefore, first four lines of the lemma follow from $\d_k u\in\cC^{2,\a,\b}$. 

Now take $v = \d_{\bar l}u$, and then we have 
$$ |\z|^{2-2\b} \d_{\z}\d_{\bar\z} v \in \cC^{,\a,\b}.  $$
Hence we conclude the estimate 
$$\d^{\phi}_\z\d_\z\d_{\bar l}u = O(|\z|^{\a\b}),$$ 
thanks to Brendle's Appendix \cite{Br} again. 
\end{proof}

Let $D$ denote by the connection associated with $\Omega$, 
and we can further estimate all the third covariant derivatives.   

\begin{lemma}
\label{lem-3tang-2}
We have 
$$ (D^3 u) (\d_i, \d_{\bar l}, \d_k) \in \cC^{,\a,\b},$$
$$ \rho^{1-\b} (D^3 u) (\d_i, \d_{\bar\z}, \d_k) \in\cC^{,\a,\b}_0,$$
$$ \rho^{1-\b} (D^3 u) (\d_\z, \d_{\bar l}, \d_k) \in \cC^{,\a,\b}_0,$$
$$ \rho^{2-2\b} (D^3 u) (\d_\z, \d_{\bar\z}, \d_k) \in\cC^{,\a,\b},$$
for all $i, k, l >1$. Moreover, we have 
$$ \rho^{2-2\b} (D^3u) (\d_\z, \d_{\bar l}, \d_\z) = O(\rho^{\a\b}),$$
for all $l>1$. 
\end{lemma}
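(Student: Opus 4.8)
The plan is to convert the covariant third derivatives $u_{,\mu\bar\nu\gamma}=(D^3u)(\d_\mu,\d_{\bar\nu},\d_\gamma)$, taken with respect to the Chern connection $D$ of the model metric $\Omega$, into ordinary partial derivatives plus Christoffel corrections, and then to balance the weights term by term. Since $\Omega$ is K\"ahler its mixed Christoffel symbols vanish, so for the function $u$ the only surviving correction is the purely holomorphic one, and
$$ u_{,\mu\bar\nu\gamma} = \d_\gamma\d_{\bar\nu}\d_\mu u - \cu^{\sigma}_{\gamma\mu}\,\d_{\bar\nu}\d_\sigma u, $$
the index $\sigma$ running over $\z,2,\dots,n$ and $\cu$ denoting the Christoffel symbols of $\Omega$. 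By Lemma (\ref{lem-model}) the model metric $\Omega$ is a $\cC^{4,\a,\b}$ conic K\"ahler metric, hence by Proposition (\ref{prop-chris}) its symbols $\cu$ enjoy the model growth in the third order; in particular the normal-direction symbols $\rho^{\b-1}\cu^{\z}_{ik}$, $\rho^{1-\b}\cu^{k}_{i\z}$ and $\rho^{2-2\b}\cu^{k}_{\z\z}$ all vanish along $D$ (this vanishing is read off from the metric growth estimates used to prove Proposition (\ref{prop-chris})), while $\cu^{j}_{ik},\,\cu^{\z}_{i\z}\in\cC^{,\a,\b}$ and $\rho^{1-\b}(\cu^{\z}_{\z\z}+(1-\b)\z^{-1})\in\cC^{,\a,\b}_0$.

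For the first four (mixed) estimates I would substitute the identity above and treat the two kinds of terms separately. The leading partial term $\d_\gamma\d_{\bar\nu}\d_\mu u$, weighted by the prescribed power of $\rho$, lies in the claimed space directly by Lemma (\ref{lem-3-2}). Each Christoffel correction $\cu^{\sigma}_{\gamma\mu}\,\d_{\bar\nu}\d_\sigma u$ is then split into its $\sigma=\z$ and tangential pieces; I match the weight of $\cu^{\sigma}_{\gamma\mu}$ against that of the second derivative $\d_{\bar\nu}\d_\sigma u$ read off from $u\in\cC^{2,\a,\b}$, namely $\d_k\d_{\bar l}u\in\cC^{,\a,\b}$, $\rho^{1-\b}\d_\z\d_{\bar l}u,\ \rho^{1-\b}\d_k\d_{\bar\z}u\in\cC^{,\a,\b}_0$ and $\rho^{2-2\b}\d_\z\d_{\bar\z}u\in\cC^{,\a,\b}$. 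In every case the total power of $\rho$ equals the prescribed weight; and where membership in $\cC^{,\a,\b}_0$ is claimed (the second and third displays) each correction contains a factor vanishing along $D$ — either a Christoffel symbol with a normal index or a mixed second derivative of $u$ — which forces the product into $\cC^{,\a,\b}_0$. These are routine weight bookkeepings mirroring Brendle's Proposition 6.4 in \cite{Br}.

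The delicate case, which I expect to be the main obstacle, is the last display, the purely normal derivative $(D^3u)(\d_\z,\d_{\bar l},\d_\z)$: here the symbol $\cu^{\z}_{\z\z}$ carries the singular pole $-(1-\b)\z^{-1}$, so the naive weight count fails. The idea is to reorganise the identity through the twisted operator $\d^{\phi}$. Writing $\cu^{\z}_{\z\z}=(\cu^{\z}_{\z\z}+(1-\b)\z^{-1})-(1-\b)\z^{-1}$, the pole recombines with the leading term to produce $\d^{\phi}_\z$, giving
$$ u_{,\z\bar l\z} = \d^{\phi}_\z\d_\z\d_{\bar l}u - \bigl(\cu^{\z}_{\z\z}+(1-\b)\z^{-1}\bigr)\,\d_\z\d_{\bar l}u - \sum_{k>1}\cu^{k}_{\z\z}\,\d_k\d_{\bar l}u. $$
Now $\rho^{2-2\b}\d^{\phi}_\z\d_\z\d_{\bar l}u=O(\rho^{\a\b})$ precisely by the last line of Lemma (\ref{lem-3-2}); the second term equals $\bigl(\rho^{1-\b}(\cu^{\z}_{\z\z}+(1-\b)\z^{-1})\bigr)\bigl(\rho^{1-\b}\d_\z\d_{\bar l}u\bigr)$, a product of two $\cC^{,\a,\b}_0$ factors and hence $O(\rho^{\a\b})$; and the third term is $\bigl(\rho^{2-2\b}\cu^{k}_{\z\z}\bigr)\bigl(\d_k\d_{\bar l}u\bigr)\in\cC^{,\a,\b}_0$. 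Summing, $\rho^{2-2\b}(D^3u)(\d_\z,\d_{\bar l},\d_\z)=O(\rho^{\a\b})$, as claimed. The heart of the argument is exactly this cancellation of the $\z^{-1}$ pole by $\d^{\phi}$, which is what forces one to invoke the sharp $\d^{\phi}$-estimate of Lemma (\ref{lem-3-2}) rather than a crude bound.
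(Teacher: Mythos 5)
Your proposal is correct and follows essentially the same route as the paper, which disposes of the lemma in one line by noting that $\Omega$ has the model growth in the third order (so its modified Christoffel symbols $\cuu$ are cone admissible) and then invoking Lemma (\ref{lem-3-2}); your write-up simply makes explicit the weight bookkeeping and, for the last display, the cancellation of the $(1-\b)\z^{-1}$ pole via $\d^{\phi}$ that the paper encodes in the passage from $\cu^{\z}_{\z\z}$ to $\cuu^{\z}_{\z\z}$. Your extra observation that $\rho^{\b-1}\cu^{\z}_{ik}$ actually vanishes along $D$ (needed to land in $\cC^{,\a,\b}_0$ in the second display, and verifiable directly for the model metric) is a detail the paper leaves implicit, and you are right to flag it.
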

\begin{proof}
Note that $\Omega$ has the model growth in the 3rd. order, and then all the Christoffel symbols $\cuu^{\gamma}_{\a\mu}$ are cone admissible.
Therefore, the result follows directly from Lemma (\ref{lem-3-2}).
\end{proof}

\begin{lemma}
\label{lem-3normal-2}
We have
\begin{equation}
\label{1bar11}
 |\z|^{3-3\b}(D^3 u)(\d_\z, \d_{\bar\z}, \d_\z) \in\cC^{,\a,\b}_0.
\end{equation}
Moreover, we have 
\begin{equation}
\label{2bar11}
 \rho^{3-3\b}\d^{\phi}_{\z} u_{,\z\bar\z} \in \cC^{,\a,\b}_0. 
\end{equation}
\end{lemma}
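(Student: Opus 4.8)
The plan is to treat (\ref{2bar11}) as the heart of the lemma and to obtain (\ref{1bar11}) from it. Since $u$ is scalar, its mixed second covariant derivative is the partial one, $u_{,\z\bar\z}=\d_\z\d_{\bar\z}u$, so applying the model connection $D$ once more in the normal direction and regrouping the singular part of $\cu^\z_{\z\z}$ into the operator $\d^\phi_\z$ exactly as in (\ref{newcon})--(\ref{1111}) gives
$$ (D^3 u)(\d_\z,\d_{\bar\z},\d_\z) = \d^{\phi}_{\z} u_{,\z\bar\z} - \cuu^{\g}_{\z\z}\, u_{,\g\bar\z}, $$
where $\cuu^{\g}_{\z\z}$ are the tilde-Christoffel symbols of $\Omega$. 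As $\Omega$ has the model growth in the 3rd order (Proposition \ref{prop-chris}), these are cone admissible, and after multiplying by $\rho^{3-3\b}$ the correction splits into $(\rho^{1-\b}\cuu^{\z}_{\z\z})(\rho^{2-2\b}u_{,\z\bar\z})$ and $\sum_{k>1}(\rho^{2-2\b}\cuu^{k}_{\z\z})(\rho^{1-\b}u_{,k\bar\z})$, each a product of a $\cC^{,\a,\b}_0$ factor with a $\cC^{,\a,\b}$ one and hence in $\cC^{,\a,\b}_0$. Thus (\ref{1bar11}) reduces to (\ref{2bar11}).

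To prove (\ref{2bar11}) I would not differentiate the equation in the $\z$ direction directly, but first eliminate the normal component algebraically. Writing $g=\Omega+\ddbar u$ and combining the cofactor expansion (\ref{inverse}) with $\det g=e^F\det h$ gives
$$ g_{\z\bar\z}=\frac{e^F\det h}{\det G_{\z\bar\z}}+\sum_{k,l>1}(G_{\z\bar\z})^{\bar l k}g_{\z\bar l}g_{k\bar\z}, $$
whence $u_{,\z\bar\z}=g_{\z\bar\z}-h_{\z\bar\z}$. Writing $\det h=|\z|^{2\b-2}\tilde h$ with $\tilde h=|\z|^{2-2\b}\det h\in\cC^{2,\a,\b}$ positive (Lemma \ref{lem-vol2} applied to the model, which is $\cC^{4,\a,\b}$ by Lemma \ref{lem-model}), the crucial cancellation $\d^{\phi}_{\z}(|\z|^{2\b-2}\cdot)=|\z|^{2\b-2}\d_\z(\cdot)$ kills the singular coefficient in the leading term, leaving $\rho^{3-3\b}\d^{\phi}_{\z}(|\z|^{2\b-2}P)=\rho^{1-\b}\d_\z P$ with $P=e^F\tilde h/\det G_{\z\bar\z}$. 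Here $\rho^{1-\b}\d_\z P\in\cC^{,\a,\b}_0$ because $\rho^{1-2\b}\d_\z F\in\cC^{,\a,\b}$ (the improved growth of $\cC^{2,\a,\b}$ functions) and $\rho^{1-\b}\d_\z g_{k\bar l}\in\cC^{,\a,\b}_0$ by Lemma (\ref{lem-3-2}), the determinants being bounded away from $0$. For the tangential sum the $\phi$-twist attaches to the factor carrying the $\z$-index, so $\d^{\phi}_{\z}(g_{\z\bar l}g_{k\bar\z})=(\d^{\phi}_{\z}g_{\z\bar l})g_{k\bar\z}+g_{\z\bar l}\,\d_\z g_{k\bar\z}$, and every resulting term, weighted by $\rho^{3-3\b}$, inherits a vanishing factor from $\rho^{1-\b}g_{\z\bar l},\rho^{1-\b}g_{k\bar\z}\in\cC^{,\a,\b}_0$ together with the third-order bounds of Lemma (\ref{lem-3-2}); the model piece $\d^{\phi}_{\z}h_{\z\bar\z}$ is absorbed by the model growth in the same way. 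This gives (\ref{2bar11}).

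The main obstacle is exactly the pure normal direction: the tangential estimates of Lemma (\ref{lem-3-2}) came from differentiating (\ref{ma0}) in a $z_k$ direction and applying Donaldson's estimate, but differentiating in $\z$ introduces the singular factor $\z^{-1}$, which is borderline against the available weights, so this route is unavailable. The cofactor identity together with the operator $\d^\phi$ is what converts the problem into one involving only tangential third derivatives and $\d_\z F$. The delicate point is to check that each term genuinely lies in $\cC^{,\a,\b}_0$ (vanishes along $D$) rather than being merely bounded; this is where the small angle condition enters, through the improved growth $\rho^{1-2\b}\d_\z F\in\cC^{,\a,\b}$ and through the vanishing of the mixed components $g_{\z\bar l},g_{k\bar\z}$. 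I emphasize that one cannot instead copy the proof of Lemma (\ref{lem-3rd-1}), which used $|\z|^{2-2\b}\d_\z\d_{\bar\z}u\in\cC^{2,\a,\b}$; that is precisely the $\cC^{4,\a,\b}$ regularity this part is helping to establish and so is not yet available.
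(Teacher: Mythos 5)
Your argument is correct and rests on the same basic inputs as the paper, but the core step is organized in a genuinely different way. The paper differentiates the Monge--Amp\`ere equation covariantly in the normal direction with respect to the model connection $D$, obtaining the trace identity $g^{\bar\b\a}(D^3u)(\d_\z,\d_{\bar\b},\d_\a)=\d_\z F$; it then subtracts the tangential and mixed components, which are controlled by Lemma (\ref{lem-3tang-2}), and uses $g^{\bar\z\z}\sim\rho^{2-2\b}$ to isolate (\ref{1bar11}), finally unpacking the covariant derivative to deduce (\ref{2bar11}). You run that last step in reverse and replace the trace identity by its ``integrated'' form: the cofactor expansion (\ref{inverse}) combined with $\det g=e^F\det h$ solves for $g_{\z\bar\z}$ explicitly, and the identity $\d^{\phi}_{\z}(|\z|^{2\b-2}\,\cdot\,)=|\z|^{2\b-2}\d_\z(\,\cdot\,)$ makes the cancellation of the $(1-\b)\z^{-1}$ singularity visible term by term. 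The two routes consume identical inputs ($F\in\cC^{2,\a,\b}$, the tangential and mixed third-order bounds of Lemma (\ref{lem-3-2}), the model growth of $\Omega$); yours avoids inverting the metric in the normal direction at the price of a longer Leibniz computation, while the paper's is shorter but leaves the cancellation implicit in the connection. Two small remarks: your aside that differentiating the equation in the $\z$ direction is ``unavailable'' is not quite accurate --- the paper does exactly that, covariantly, and the singular factor is absorbed by the Christoffel symbol $\cu^{\z}_{\z\z}$; and in a few places you multiply a quantity known only to satisfy a pointwise bound $O(\rho^{\a\b})$ (e.g.\ $\rho^{2-2\b}\d^{\phi}_{\z}g_{\z\bar l}$) by a $\cC^{,\a,\b}_0$ factor and conclude membership in $\cC^{,\a,\b}_0$, which strictly requires a H\"older estimate as well; the paper's own proof makes assertions of the same kind at the same level of detail, so this is not a gap relative to the paper's standard.
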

\begin{proof}
Differentiating the Monge-Amp\`ere equation (\ref{ma0}) from the normal direction gives 
\begin{equation}
\label{ma-normal}
 g^{\bar\b\a} (D^3 u) (\d_\z, \d_{\bar\b}, \d_\a) = \d_\z F.
\end{equation}
Thanks to Lemma (\ref{lem-3tang-2}), we obtain
$$ \rho^{1-\b} g^{\bar l k} (D^3 u) (\d_\z, \d_{\bar l}, \d_k ) \in\cC^{,\a,\b}_0, $$
$$ \rho^{1-\b} g^{\bar l\z} (D^3 u) (\d_\z, \d_{\bar l}, \d_\z) \in\cC^{,\a,\b}_0, $$
$$ \rho^{1-\b} g^{\bar\z k} (D^3u) (\d_\z, \d_{\bar\z}, \d_k) \in \cC^{,\a,\b}_0 $$
for $k,l>1$. This implies 
$$ |\z|^{1-\b} g^{\bar\z\z} (D^3u) (\d_\z, \d_{\bar\z}, \d_\z) \in \cC^{,\a,\b}_0, $$
since $\rho^{1-\b} \d_\z F \in \cC^{,\a,\b}_0$. 
Then we obtain equation (\ref{1bar11}), and equation (\ref{2bar11}) follows by the computation 
$$ |\z|^{3-3\b} (D^3 u) (\d_\z,\d_{\bar\z}, \d_\z) = |\z|^{3-3\b} (\d^{\phi}_\z u_{,\z\bar\z} + \cuu^{\gamma}_{\z\z} u_{,\gamma\bar\z}). $$
\end{proof}
Then by the standard computation, we proved the following 
\begin{corollary}
A $\cC^{2,\a,\b}$ conic cscK metric $\omega$ has the model growth in the 3rd. order.
\end{corollary}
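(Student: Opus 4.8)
The plan is to reduce the five Christoffel bounds in the definition of model growth to the third-order estimates for $u$ already established, through the identity $\Gamma^\gamma_{\mu\lambda}=g^{\bar\nu\gamma}\d_\mu g_{\lambda\bar\nu}$ combined with the splitting $g_{\lambda\bar\nu}=h_{\lambda\bar\nu}+\d_\lambda\d_{\bar\nu}u$, where $h$ denotes the coefficients of the model metric $\Omega$. Differentiating gives $\d_\mu g_{\lambda\bar\nu}=\d_\mu h_{\lambda\bar\nu}+\d_\mu\d_\lambda\d_{\bar\nu}u$, so the growth of every first derivative of $g$ is the sum of a model piece $\d_\mu h_{\lambda\bar\nu}$ and a potential piece $\d_\mu\d_\lambda\d_{\bar\nu}u$. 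The model piece already obeys all the required bounds, since $\Omega$ is of class $\cC^{4,\a,\b}$ by Lemma \ref{lem-model} and therefore has model growth by Proposition \ref{prop-chris}; the potential piece is governed by Lemma \ref{lem-3-2} and equation (\ref{2bar11}).

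First I would record the growth of the metric coefficients. The tangential lines of Lemma \ref{lem-3-2} give $\d_i g_{k\bar l}\in\cC^{,\a,\b}$, $\rho^{1-\b}\d_i g_{\z\bar l}\in\cC^{,\a,\b}_0$ and $\rho^{2-2\b}\d_i g_{\z\bar\z}\in\cC^{,\a,\b}$; rewriting the last line of Lemma \ref{lem-3-2} through the operator $\d^{\phi}$ yields $\rho^{2-2\b}\d^{\phi}_{\z}g_{\z\bar l}=O(\rho^{\a\b})$, while equation (\ref{2bar11}) gives $\rho^{3-3\b}\d^{\phi}_{\z}g_{\z\bar\z}\in\cC^{,\a,\b}_0$. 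These are exactly the metric estimates used in the proof of Proposition \ref{prop-chris}. I would then feed them, together with the scalings of the inverse metric at the $\cC^{,\a,\b}$ level ($g^{\bar l k}\in\cC^{,\a,\b}$, $\rho^{\b-1}g^{\bar\z k}$ bounded, $\rho^{2\b-2}g^{\bar\z\z}\in\cC^{,\a,\b}$, all consequences of the quasi-isometry of $\omega$ with $\Omega_0$ together with the expansion formula (\ref{inverse})), into the same standard computation as in Lemma 6.3 of \cite{Br}; contracting in $\bar\nu$ and counting powers of $\rho$ then produces each of the four groups of Christoffel bounds.

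The decisive point is the pair of symbols with two normal lower indices, $\Gamma^k_{\z\z}$ and $\Gamma^{\z}_{\z\z}$. Writing $\d_\z g_{\z\bar\nu}=\d^{\phi}_{\z}g_{\z\bar\nu}-(1-\b)\z^{-1}g_{\z\bar\nu}$ and invoking the purely algebraic identity $g^{\bar\nu\gamma}g_{\z\bar\nu}=\delta^\gamma_\z$, the genuinely singular terms collapse: for $\gamma=k$ they cancel completely, leaving $\rho^{2-2\b}\Gamma^k_{\z\z}=g^{\bar l k}[\rho^{2-2\b}\d^{\phi}_{\z}g_{\z\bar l}]+g^{\bar\z k}[\rho^{2-2\b}\d^{\phi}_{\z}g_{\z\bar\z}]$, whereas for $\gamma=\z$ they reduce to precisely $-(1-\b)\z^{-1}$, so that $\Gamma^{\z}_{\z\z}+(1-\b)\z^{-1}=g^{\bar\nu\z}\d^{\phi}_{\z}g_{\z\bar\nu}$. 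I expect the main obstacle to lie in the residual $\d^{\phi}_{\z}g_{\z\bar l}$ contribution, for which I have only the growth rate $\rho^{2-2\b}\d^{\phi}_{\z}g_{\z\bar l}=O(\rho^{\a\b})$ from Lemma \ref{lem-3-2}: upgrading this size estimate to genuine membership in the conic H\"older space $\cC^{,\a,\b}_0$, as opposed to a bare decay bound, is the one step that is not pure bookkeeping and that forces us back to Brendle's appendix estimates. The complementary $\d^{\phi}_{\z}g_{\z\bar\z}$ contribution is controlled cleanly by equation (\ref{2bar11}), and once the $O(\rho^{\a\b})$ term has been upgraded the remaining symbols are routine, so the corollary follows from the standard computation of \cite{Br}.
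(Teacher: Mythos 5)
Your argument follows the paper's own route: the paper deduces the corollary from Lemmas \ref{lem-3-2}, \ref{lem-3tang-2} and \ref{lem-3normal-2} by ``the standard computation'' (the same contraction $\Gamma^{\gamma}_{\mu\lambda}=g^{\bar\nu\gamma}\d_{\mu}g_{\lambda\bar\nu}$ against Brendle's Lemma 6.3 that is invoked in Proposition \ref{prop-chris}), which is exactly what you carry out, including the cancellation of the $(1-\b)\z^{-1}$ singularities via $g^{\bar\nu\gamma}g_{\z\bar\nu}=\delta^{\gamma}_{\z}$. The one subtlety you flag --- upgrading $\rho^{2-2\b}\d^{\phi}_{\z}g_{\z\bar l}=O(\rho^{\a\b})$ from a decay rate to membership in $\cC^{,\a,\b}_0$ --- is real but is supplied by Proposition \ref{brendle} in the Appendix, whose conclusion is a bound on the $\cC^{,\a,\b}$ norm of $r^{2-2\b}F$ rather than a bare pointwise estimate, so your proof closes along the same lines as the paper's.
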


In order to prove $ \Delta_{\Omega_0}u \in \cC^{2,\a,\b}$, it is enough to prove $u_{,j\bar j}\in\cC^{2,\a,\b}$ and $|\z|^{2-2\b}u_{,\z\bar\z}\in\cC^{2,\a,\b}$.
And it is comparatively easy to control the growth in tangential directions.
\begin{lemma}
\label{lem-tang-2}
We have 
$$ \d_k\d_{\bar l}u  \in\cC^{2,\a,\b},$$ for all $j>1$. 
\end{lemma}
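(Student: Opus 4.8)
The plan is to exhibit $v:=\d_k\d_{\bar l}u$ (for fixed $k,l>1$) as the solution of a linear conic elliptic equation whose right-hand side is of class $\cC^{,\a,\b}$, and then to invoke Donaldson's estimate exactly as in Lemma (\ref{lem-3-2}). Since $u\in\cC^{2,\a,\b}$, the $k\bar l$-component of the $(1,1)$-form $\ddbar u$ is already of class $\cC^{,\a,\b}$, so $v\in\cC^{,\a,\b}$. The conic cscK metric $\omega$ is uniformly equivalent to $\Omega_0$ and, by Lemma (\ref{lem-vol2}), has inverse-metric coefficients $|\z|^{2\b-2}g^{\bar\z\z}$ and $g^{\bar lk}$ of class $\cC^{2,\a,\b}$; hence $\Delta_\omega$ is an admissible conic operator and it suffices to prove
$$\Delta_\omega v = g^{\bar\b\a}\d_\a\d_{\bar\b}v \in \cC^{,\a,\b}.$$

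To obtain the equation for $v$, I would differentiate the Monge-Amp\`ere equation (\ref{ma0}), written as $\log\det g=F+\log\det h$, by the constant-coefficient operator $\d_k\d_{\bar l}$ on $X-D$. Because $\d_k$ and $\d_{\bar l}$ commute with $\d_\a$ and $\d_{\bar\b}$, no curvature of $\omega$ is produced and a direct expansion gives
$$g^{\bar\b\a}\d_\a\d_{\bar\b}v=\d_k\d_{\bar l}F+\Big(\d_k\d_{\bar l}\log\det h-g^{\bar\b\a}\d_k\d_{\bar l}h_{\a\bar\b}\Big)+g^{\bar\b\mu}g^{\bar\nu\a}(\d_k g_{\mu\bar\nu})(\d_{\bar l}g_{\a\bar\b}).$$
Here $\d_k\d_{\bar l}F\in\cC^{,\a,\b}$ because $F\in\cC^{2,\a,\b}$; the two bracketed terms involve only the fixed model metric $\Omega$ and the inverse of $g$, and lie in $\cC^{,\a,\b}$ because $\d_k\d_{\bar l}\log\det h=-(\Ricci\,\Omega)_{k\bar l}$ is cone admissible and, $\Omega$ being of class $\cC^{4,\a,\b}$ (Lemma (\ref{lem-model})), the tangential second derivatives $\d_k\d_{\bar l}h_{\a\bar\b}$ carry weights matching those of $g^{\bar\b\a}$ by Lemma (\ref{lem-vol2}).

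The only real obstacle is the quadratic term $g^{\bar\b\mu}g^{\bar\nu\a}(\d_k g_{\mu\bar\nu})(\d_{\bar l}g_{\a\bar\b})$, summed over all---possibly normal---indices. Expanding $\d_k g_{\mu\bar\nu}=\d_k h_{\mu\bar\nu}+\d_k\d_\mu\d_{\bar\nu}u$, I would run the weight bookkeeping case by case: each normal inverse index contributes a factor $|\z|^{2-2\b}$ (Lemma (\ref{lem-vol2})) which must cancel the singular weight $|\z|^{2\b-2}$ of the corresponding third derivative of $u$ supplied by Lemma (\ref{lem-3-2}). The extreme case $\mu=\nu=\a=\b=\z$ reads
$$\big(|\z|^{2\b-2}g^{\bar\z\z}\big)^2\big(|\z|^{2-2\b}\d_k g_{\z\bar\z}\big)\big(|\z|^{2-2\b}\d_{\bar l}g_{\z\bar\z}\big),$$
a product of four $\cC^{,\a,\b}$ factors of total weight zero; all other index combinations are strictly milder, the mixed ones being handled by the $\rho^{1-\b}$-weighted third derivatives of Lemma (\ref{lem-3-2}). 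Consequently $\Delta_\omega v\in\cC^{,\a,\b}$, and Donaldson's estimate upgrades $v$ to $\cC^{2,\a,\b}$, which is the assertion of the lemma.
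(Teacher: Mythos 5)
Your proposal is correct and follows essentially the same route as the paper: differentiating $\log\det g = F+\log\det h$ by $\d_k\d_{\bar l}$ is exactly the identity the paper uses when it writes $R_{k\bar l}(\omega)=\Delta_{\omega}g_{k\bar l}-g^{\bar\b\a}g^{\bar\nu\mu}\d_k g_{\a\bar\nu}\d_{\bar l}g_{\mu\bar\b}$ together with $Ric(\omega)=Ric(\Omega)-dd^cF$, and both arguments then control the quadratic term by the cone admissibility of the third-order tensors from Lemma (\ref{lem-3-2}) and conclude with Donaldson's estimate. The only cosmetic difference is that the paper applies the estimate to $g_{k\bar l}$ and subtracts $h_{k\bar l}$ afterwards, whereas you move the $h$-terms to the right-hand side at the outset.
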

\begin{proof}
Recall that all the Ricci curvature of $\omega$ is of class $\cC^{,\a,\b}$, and the Ricci tensor can be written as 
$$ R_{k\bar l} = \Delta_{\omega} g_{k\bar l} - g^{\bar\b\a}g^{\bar\nu\mu}\d_k g_{\a\bar\nu} \d_{\bar l} g_{\mu\bar\b}\in\cC^{,\a,\b}. $$
Therefore we have 
$$ \Delta_{\omega} g_{k\bar l} \in \cC^{,\a,\b},$$
since all the three tensors like $\d_k g_{\a\bar\nu}, \d_{\bar l} g_{\mu\bar\b}$ are cone admissible thanks to Lemma (\ref{lem-3-2}). 
Then the result follows from Donaldson's estimate. 
\end{proof}

\begin{lemma}
\label{lem-normal-2}
We have 
$$ |\z|^{2-2\b} \d_\z\d_{\bar\z} u \in \cC^{2,\a,\b}.$$
\end{lemma}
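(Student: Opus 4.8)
The plan is to avoid differentiating the Monge--Amp\`ere equation a second time and instead to recover the weighted normal Hessian \emph{algebraically} from the equation, reversing the computation of Lemma \ref{lem-vol2}. Since $\d_\z\d_{\bar\z}u = g_{\z\bar\z} - h_{\z\bar\z}$, and the model metric $\Omega$ is of class $\cC^{4,\a,\b}$ by Lemma \ref{lem-model} — so that $|\z|^{2-2\b}h_{\z\bar\z}\in\cC^{2,\a,\b}$ by Lemma \ref{lem-vol2} applied to $\Omega$ — it is enough to prove that the weighted normal entry $w:=|\z|^{2-2\b}g_{\z\bar\z}$ is of class $\cC^{2,\a,\b}$. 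To isolate $g_{\z\bar\z}$ I would use the Schur complement expansion (\ref{inverse}), which rearranges to
\begin{equation*}
g_{\z\bar\z} = \frac{\det g}{\det G_{\z\bar\z}} + \sum_{k,l>1}(G_{\z\bar\z})^{\bar l k} g_{\z\bar l} g_{k\bar\z},
\end{equation*}
where $G_{\z\bar\z}$ denotes the tangential block $(g_{k\bar l})_{k,l>1}$.

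The first, determinantal, term is controlled purely by the equation. By the Monge--Amp\`ere equation (\ref{ma0}) we have $\det g = e^{F}\det h$; since $F\in\cC^{2,\a,\b}$ (established above from (\ref{ma1}) by Donaldson's estimate), the composition $e^{F}$ is again of class $\cC^{2,\a,\b}$, and $|\z|^{2-2\b}\det h\in\cC^{2,\a,\b}$ because $\Omega$ is $\cC^{4,\a,\b}$ (Lemma \ref{lem-vol2}). The cofactor $\det G_{\z\bar\z}=\det\big((g_{k\bar l})_{k,l>1}\big)$ lies in $\cC^{2,\a,\b}$ and is bounded below, using Lemma \ref{lem-tang-2} (which gives $\d_k\d_{\bar l}u\in\cC^{2,\a,\b}$) together with the fact that $\cC^{2,\a,\b}$ is a Banach algebra stable under composition with smooth functions; hence $1/\det G_{\z\bar\z}\in\cC^{2,\a,\b}$. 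Multiplying these factors keeps $|\z|^{2-2\b}\det g/\det G_{\z\bar\z}$ in $\cC^{2,\a,\b}$.

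The second, cross, term is where the real content lies, and it is exactly what Lemma \ref{lem-vol} was designed to handle. The inverse entries $(G_{\z\bar\z})^{\bar l k}$ are in $\cC^{2,\a,\b}$ by the same algebra argument, so it remains to control $|\z|^{2-2\b}g_{\z\bar l}g_{k\bar\z}$ for each fixed $k,l>1$. Expanding $g_{\z\bar l}=h_{\z\bar l}+\d_\z\d_{\bar l}u$ and $g_{k\bar\z}=h_{k\bar\z}+\d_k\d_{\bar\z}u$ by bilinearity produces four products of the form $|\z|^{2-2\b}(\d_\z\d_{\bar l}a)(\d_k\d_{\bar\z}b)$ with $a,b\in\{u,\ \phi_0+|s|^{2\b}_h\}$, all of class $\cC^{4,\a,\b}$; each such product lies in $\cC^{2,\a,\b}$ by Lemma \ref{lem-vol}. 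Summing against the $\cC^{2,\a,\b}$ coefficients $(G_{\z\bar\z})^{\bar l k}$ remains in $\cC^{2,\a,\b}$. Assembling the two terms gives $w\in\cC^{2,\a,\b}$, and subtracting $|\z|^{2-2\b}h_{\z\bar\z}$ yields the claim.

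The main obstacle is conceptual rather than computational: the individual factors $g_{\z\bar l}$ — equivalently $\rho^{1-\b}\d_\z\d_{\bar l}u$ — are \emph{not} of class $\cC^{2,\a,\b}$, as emphasized in the remark following Lemma \ref{lem-model}, so the cross term cannot be estimated factor by factor. Only the specific weighted product $|\z|^{2-2\b}g_{\z\bar l}g_{k\bar\z}$ regularizes, and the cancellation that makes this work is precisely the statement of Lemma \ref{lem-vol}; the whole argument therefore hinges on routing the estimate through that lemma rather than through the separate entries. The subsidiary point to verify with care is the algebra and smooth-composition stability of $\cC^{2,\a,\b}$ (needed for $e^{F}$, for $\det G_{\z\bar\z}$, and for its reciprocal), which in turn rests on the third- and fourth-order growth estimates of the preceding lemmas. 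Granting these, the normal estimate follows without ever differentiating the Monge--Amp\`ere equation twice.
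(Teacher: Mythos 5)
Your algebraic reformulation is attractive, and the first half of it is sound: writing $\d_\z\d_{\bar\z}u = g_{\z\bar\z}-h_{\z\bar\z}$, solving the Schur complement identity (\ref{inverse}) for $g_{\z\bar\z}$, and controlling the determinantal term via $\det g = e^F\det h$ together with $F\in\cC^{2,\a,\b}$, Lemma \ref{lem-model}, Lemma \ref{lem-vol2} applied to $\Omega$, and the tangential regularity of Lemma \ref{lem-tang-2} all goes through (granting the routine algebra/composition stability of $\cC^{2,\a,\b}$, which does hold). The gap is in the cross term, and it is a circularity rather than a repairable technicality: you invoke Lemma \ref{lem-vol} for the products $|\z|^{2-2\b}(\d_\z\d_{\bar l}a)(\d_k\d_{\bar\z}b)$ with $a$ or $b$ equal to $u$, but Lemma \ref{lem-vol} is stated for $\cC^{4,\a,\b}$ functions, and its proof (via equations (\ref{tag}) and (\ref{normal})) rests on the fourth-order estimates of Lemma \ref{lem-4th} — in particular on $\rho^{3-3\b}\bigl(\d_k\d_\z\d_{\bar\z}\d_\z u + \tfrac{1-\b}{\z}\d_k\d_{\bar\z}\d_\z u\bigr)\in\cC^{,\a,\b}_0$, which is obtained in (\ref{220}) precisely by differentiating the hypothesis $|\z|^{2-2\b}\d_\z\d_{\bar\z}u\in\cC^{2,\a,\b}$ twice. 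That hypothesis is exactly the conclusion of the present lemma, so at this stage of the bootstrap it is not available: all you have in hand is $u\in\cC^{2,\a,\b}$, the third-order model growth from Lemmas \ref{lem-3-2}--\ref{lem-3normal-2}, and $\d_k\d_{\bar l}u\in\cC^{2,\a,\b}$ from Lemma \ref{lem-tang-2}.

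Concretely, to show $|\z|^{2-2\b}g_{\z\bar l}g_{k\bar\z}\in\cC^{2,\a,\b}$ you must control $\Delta_{\Omega_0}$ of this product, whose normal part produces fourth derivatives of $u$ with three normal indices such as $\d_\z\d_{\bar\z}\d_\z\d_{\bar l}u$; the third-order information available before Lemma \ref{lem-normal-2} says nothing about these, and the only source of such control is a second differentiation of the Monge--Amp\`ere equation in the normal direction. The isolated cross term $g_{\z\bar l}g_{k\bar\z}$ does not appear in any differentiated form of the equation, so there is no identity that expresses its Laplacian in terms of $F$ and cone-admissible quantities. This is exactly why the paper's proof proceeds differently: it computes $\nabla_\z$ of the once-differentiated equation (\ref{ma-normal}), organizes the resulting fourth-order terms so that their \emph{sum} equals $|\z|^{2\b-2}\Delta_\omega(|\z|^{2-2\b}\d_\z\d_{\bar\z}u)$ plus cone-admissible two- and three-tensors (equation (\ref{00555})), and then applies Donaldson's estimate — the individual fourth-order terms are never controlled separately, only the combination dictated by the equation. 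To salvage your route you would still have to carry out essentially that computation to handle the cross term, at which point the algebraic detour buys nothing.
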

\begin{proof}
It is enough to prove $\Delta_{\omega} (|\z|^{2-2\b} \d_\z\d_{\bar\z} u) \in\cC^{,\a,\b}$. According to equation (\ref{ma-normal}), we have 
\begin{equation}
\label{ma-nor}
|\z|^{2-2\b}\nabla_{\z} (g^{\bar\b\a} u_{,\bar\z\a\bar\b}) = |\z|^{2-2\b}\d_\z\d_{\bar\z}F\in\cC^{,\a,\b}, 
\end{equation}
where $\nabla$ denotes the connection associated with metric $\Omega$ here. 
Then we obtain 
$$ \nabla_{\z} (g^{\bar\b\a} u_{,\bar\z\a\bar\b}) = (\nabla_\z g^{\bar\b\a}) u_{,\bar\z\a\bar\b} + g^{\bar\b\a} \nabla_\z\nabla_{\bar\z} u_{,\a\bar\b}. $$
Thanks to Lemma (\ref{lem-3tang-2}) and (\ref{lem-3normal-2}), the first term is cone admissible since it can be written as 
$$ \nabla_\z g^{\bar\b\a} = - g^{\bar\b\mu} g^{\bar\nu\a} \nabla_{\z} u_{,\mu\bar\nu}.$$
On the other hand, the second term can be computed as (compare with equation (\ref{005}))

\begin{eqnarray}
\label{00555}
g^{\bar\b\a}\nabla_{\z}\nabla_{\bar\z} u_{,\a\bar\b} &=& g^{\bar\b\a} \nabla^{\phi}_{\z}\nabla^{\phi}_{\bar\z} u_{,\a\bar\b}
\nonumber\\
&=&   g^{\bar\z\a}\nabla^{\phi}_{\z} ( \d^{\phi}_{\bar\z}u_{,\a\bar\z} - \cuu^{\bar\eta}_{\bar\z\bar\z} u_{,\a\bar\eta} ) 
+   \sum_{l>1}g^{\bar l\a}\nabla^{\phi}_{\z} ( \d_{\bar\z}u_{,\a\bar l} - \cu^{\bar\eta}_{\bar\z\bar l} u_{,\a\bar\eta} ) 
\nonumber\\
&=& g^{\bar\z\z} ( \d^{\phi}_{\z}\d^{\phi}_{\bar\z}u_{,\z\bar\z} - (\d_{\z}\cuu^{\bar\eta}_{\bar\z\bar\z}) u_{,\z\bar\eta} - \cuu^{\g}_{\z\z} \d^{\phi}_{\bar\z} u_{,\g\bar\z} 
- \cuu^{\bar\eta}_{\bar\z\bar\z} \d^{\phi}_{\z} u_{,\z\bar\eta} + \cuu^{\g}_{\z\z}\cuu^{\bar\eta}_{\bar\z\bar\z} u_{,\g\bar\eta}) 
\nonumber\\
&+& \sum_{k>1}g^{\bar\z k} (\d_\z\d^{\phi}_{\bar\z}u_{,k\bar\z} - (\d_\z\cuu^{\bar\eta}_{\bar\z\bar\z})u_{,k\bar\eta} - \cuu^{\bar\eta}_{\bar\z\bar\z}\d_\z u_{,k\bar\eta}    
- \cu^{\g}_{\z k} \d^{\phi}_{\bar\z} u_{,\g\bar\z} + \cu^{\g}_{\z k}\cuu^{\bar\eta}_{\bar\z\bar\z} u_{,\g\bar\eta}   )
\nonumber\\
&+& \sum_{l>1} g^{\bar l\z} (\d^{\phi}_{\z}\d_{\bar\z}u_{,\z\bar l} - (\d_{\z}\cu^{\bar\eta}_{\bar\z\bar l})u_{,\z\bar\eta} - \cu^{\bar\eta}_{\bar\z\bar l}\d^{\phi}_{\z}u_{,\z\bar\eta} 
- \cuu^{\g}_{\z\z}\d_{\bar\z}u_{,\g\bar l} + \cuu^{\g}_{\z\z}\cu^{\bar\eta}_{\bar\z\bar l} u_{,\g\bar\eta}  )
\nonumber\\
&+ & \sum_{k,l>1}g^{\bar lk}(\d_{\z}\d_{\bar\z} u_{,k\bar l} - (\d_{\z}\cu^{\bar\eta}_{\bar\z\bar l})u_{,k\bar\eta} - \cu^{\bar\eta}_{\bar\z\bar l} \d_{\z}u_{,k\bar l}
- \cu^{\g}_{\z k}\d_{\bar\z}u_{,\g\bar l} + \cu^{\g}_{\z k}\cu^{\bar\eta}_{\bar\z\bar l} u_{,\g\bar\eta}).
\nonumber\\
&=& |\z|^{2\b-2} \Delta_{\omega} (|\z|^{2-2\b} \d_\z\d_{\bar\z} u) + \emph{two and three tensors}.
\nonumber\\
\end{eqnarray}
Note that the derivatives of the Christoffel symbols are exactly curvature tensors 
$$ \d_{\z}\cuu^{\bar\eta}_{\bar\z \bar \nu} = \d_{\z}\cu^{\bar\eta}_{\bar\z \bar \nu} = R^{\bar\eta}_{\ \bar\z\z\nu}. $$
These curvature tensors are cone admissible by Lemma (\ref{lem-normal}), and all the two and three tensors in $u$ are also cone admissible thanks to Lemma (\ref{lem-3-2}), (\ref{lem-3normal-2}).
Therefore, the result follows.
\end{proof}

\begin{proof}[Proof of Theorem (\ref{thm-reg})]
Combining with Lemma (\ref{lem-tang-2}) and (\ref{lem-normal-2}), we have 
$$ \Delta_{\omega} (\Delta_{\Omega_0} u) \in\cC^{,\a,\b}.$$
And the result follows. 
\end{proof}

\section{Reductivity }
Let $Aut(X;D)$ be the group of holomorphic automorphism of $X$ which fixes the divisor $D$. 
There is a one-one correspondence between $Aut(X;D)$ and the space $\mathfrak{h}(X;D)$ of all holomorphic vector fields tangential to the divisor. 
Note that $\mathfrak{h}(X;D)$ is a Lie algebra since we have 
$$ [\mathfrak{h}(X;D), \mathfrak{h}(X;D)] = \mathfrak{h}(X;D).$$
Then we can generalize the Lichnerowicz-Calabi theorem to the conic setting. 

\begin{theorem}
\label{thm-reductive}
Suppose $\omega$ is a $\cC^{2,\a,\b}$ conic cscK metric on $X$ under the small angle condition. 
Then the Lie algebra $\mathfrak{h} (X;D)$ has a semidirect sum decomposition:
\begin{equation}
\mathfrak{h}(X;D) = \mathfrak{a}(X;D) \oplus \mathfrak{h'} (X;D),
\end{equation}
where $\mathfrak{a}(X;D)$ is the complex Lie subalgebra of $\mathfrak{h}(X;D)$ consisting of all parallel holomorphic vector fields tangential to $D$,
and $\mathfrak{h'}(X;D)$ is an ideal of $\mathfrak{h}(X;D)$ consisting of the image under $grad_g$ of the kernel of $\cD$ operator.

Furthermore $\mathfrak{h}'(X;D)$ is the complexification of a Lie algebra consisting of Killing vector fields of $X$ tangential to $D$. 
In particular $\mathfrak{h}'(X;D)$ is reductive. 
\end{theorem}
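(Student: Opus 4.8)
The plan is to follow the classical Lichnerowicz--Matsushima--Calabi argument, carrying out each analytic step on the open manifold $X-D$ and controlling every boundary contribution near $D$ by means of the growth estimates obtained above. First I would attach to each $Y=Y^\a\d_\a\in\mathfrak{h}(X;D)$ the $(0,1)$-form $\theta_Y:=\iota_Y\omega$ obtained by lowering the index with the conic metric, so that $(\theta_Y)_{\bar\b}=g_{\a\bar\b}Y^\a$. Since $Y$ is holomorphic and $\omega$ is K\"ahler, $\theta_Y$ is $\dbar$-closed on $X-D$; and since $Y$ is tangential to $D$ and $\omega$ is quasi-isometric to $\Omega_0$, the estimates of Lemma (\ref{lem-vol2}) and Corollary (\ref{cor-ric}) show that $\theta_Y$ is square-integrable with respect to $\omega^n$. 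This is what makes an $L^2$ Hodge theory of $\dbar$ on $(X-D,\omega)$ available.

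Next I would establish the conic Hodge decomposition $\theta_Y=\theta_H+\dbar u$, with $\theta_H$ the $\dbar$-harmonic representative and $u$ a function, and raise indices to obtain the splitting $Y=Y^{//}\oplus Y^{\perp}$ of (\ref{0000}), where $Y^{//}:=\theta_H^{\sharp}$ and $Y^{\perp}:=(\dbar u)^{\sharp}=\mathrm{grad}^{1,0}_g u$. A Bochner--Kodaira computation on the cscK manifold, valid once the boundary terms near $D$ are discarded, shows that $\theta_H$ is parallel, so $Y^{//}\in\mathfrak{a}(X;D)$; and since $Y$ and $Y^{//}$ are both holomorphic, so is $Y^{\perp}$, which forces $\dbar\,\mathrm{grad}^{1,0}_g u=0$, i.e. $u\in\ker\cD$. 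The regularity theorem (Theorem \ref{thm-reg}) together with the rough Schauder estimate for $\cD$ then lifts $u$ to class $\cC^{4,\a,\b}$, making all the covariant-derivative growth rates of Proposition (\ref{prop-conn}) available for $u$.

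The main obstacle is the integration-by-parts formula for the conic Lichnerowicz operator, which turns the correspondence $u\mapsto\mathrm{grad}^{1,0}_g u$ into a genuine isomorphism onto $\mathfrak{h}'(X;D)$ and shows that the sum is direct. Here I would expand $\int_{X-D}\langle\cD u,\cD v\rangle\,\omega^n$ by parts on the region $\{\rho\ge\ep\}$ and prove that each surface integral over the tube $\{\rho=\ep\}$ tends to $0$ as $\ep\to0$. The integrands are products of third- and fourth-order covariant derivatives of $u$ and $v$ with the curvature tensors $R_{\mu\bar\b\z\bar\z}$, $R_{\mu\bar\b j\bar j}$; their precise blow-up rates ($\rho^{2-2\b}$, $\rho^{3-3\b}$ from Proposition (\ref{prop-conn}), and the cone-admissibility of the curvature from Lemmas (\ref{lem-normal}) and (\ref{lem-tangent})) are exactly counterbalanced by the degeneracy of the conic measure on the shrinking tube, so that each boundary term is $O(\rho^{\a\b})\to0$. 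This is the step where the $\cC^{4,\a,\b}$ machinery is indispensable and where the small angle condition enters. Directness then follows because every $Z\in\mathfrak{a}$ is $\dbar$-harmonic and hence $L^2$-orthogonal to every $\dbar u$; the ideal properties $[\mathfrak{a},\mathfrak{h}']\subseteq\mathfrak{h}'$ and $[\mathfrak{h}',\mathfrak{h}']\subseteq\mathfrak{h}'$ come from the bracket identity $[Z,\mathrm{grad}^{1,0}_g u]=\mathrm{grad}^{1,0}_g(Zu)$ for $Z$ parallel, together with the stability of $\ker\cD$ under this bracket.

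Finally, because $i\ddbar$ is real the operator $\cD$ is a real operator, so $\ker\cD$ is stable under complex conjugation and every element splits into real and imaginary parts. For a real potential $u\in\ker\cD$ the Hamiltonian field $J\,\mathrm{grad}\,u=\mathrm{Im}(\mathrm{grad}^{1,0}_g u)$ is a Killing field tangential to $D$, and the identity $\mathrm{grad}^{1,0}_g u=\tfrac12(\mathrm{grad}\,u-iJ\,\mathrm{grad}\,u)$ exhibits $\mathfrak{h}'(X;D)$ as the complexification of the real Lie algebra spanned by these Killing fields. Since the complexification of the Lie algebra of a compact isometry group is reductive, $\mathfrak{h}'(X;D)$ is reductive, which completes the plan.
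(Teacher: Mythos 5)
Your plan follows the same route as the paper: lower $Y$ to a closed $(0,1)$-form, split off a harmonic part, show the harmonic part is parallel and the potential of the exact part lies in $\ker\cD$, use the integration-by-parts identity $\int_{X-D}(\cD u)\bar v\,\omega^n=\int_{X-D}\langle L_g u,L_g v\rangle\,\omega^n$ (note: the pairing is $\cD u$ against $v$, not $\cD u$ against $\cD v$) with the boundary contributions near $D$ killed by the $\cC^{4,\a,\b}$ growth rates, and deduce reductivity from the realness of $\cD$. The one step where your justification is thinner than the situation demands is the Hodge decomposition itself: on the incomplete manifold $X-D$, square-integrability of $\theta_Y$ does not by itself furnish a harmonic projection --- the paper explicitly remarks that it is not clear that $\vartheta$ has closed range --- and the decomposition is instead produced by hand, by checking that $\vartheta\tau\in\cC^{,\a,\b}$ from the model growth of the Christoffel symbols, solving the Poisson equation $\Delta_\omega u=\vartheta\tau$ with the linear theory of \cite{CZ}, lifting $u$ to $\cC^{2,\a,\b}$ by Donaldson's estimate and then to $\cC^{4,\a,\b}$ by the rough Schauder estimate for $\cD$, and setting $\gamma:=\tau-\dbar u$. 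A second detail you should not skip: the harmonic part and the gradient part are a priori defined and holomorphic only on $X-D$, so one must still extend them holomorphically across $D$ and verify tangentiality; the paper does this in Lemma (\ref{lem-vector}), using that the $\z$-component of $\uparrow^{\omega}\dbar u$ tends to zero at the divisor. With these two points filled in, your argument coincides with the paper's.
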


\subsection{Hodge decomposition}
Let $\omega$ be a $\cC^{4,\a,\b}$ conic cscK metric, and $Y$ be a holomorphic vector field on $X$ tangential to $D$. 
The induced $(0,1)$ form of $Y$ by $\omega$ is 
$$ \tau: = \downarrow^{\omega} Y, $$
and
$$  \tau_{\bar{\z}} = g_{\z\bar{\z}} Y^{\z} + \sum_{k>1}g_{k\bar{\z}}Y^{k}, $$
$$  \tau_{\bar{l}} = g_{\z\bar{l}} Y^{\z} + \sum_{k>1} g_{k\bar{l}}Y^{k}. $$
As a $(0,1)$ form, $\tau$ is of class $\cC^{,\a,\b}$ since $\rho^{1-\b}\tau_{\bar{\z}}\in \cC^{,\a,\b}_0 $ and $\tau_{\bar{l}}\in \cC^{,\a,\b}$,
and it is easy to see $\tau$ is closed on $X-D$ by a local computation \cite{Fut}.

Let $\vartheta$ be the formal adjoint of $\dbar$ operator acting on functions with respect to the metric $\omega$. 
It is not clear that this operator $\vartheta$ has closed range, but we can compute it outside the divisor. 
\begin{equation}
\label{oneform}
\vartheta\tau = g^{\bar{\z}\z}\nabla_{\z}\tau_{\bar\z} + \sum_{k>1}g^{\bar{\z}k}\nabla_{k}\tau_{\bar{\z}}
+ \sum_{l,k>1} ( g^{\bar{l}\z}\nabla_{\z}\tau_{\bar{l}} + g^{\bar{l}k}\nabla_{k}\tau_{\bar{l}}).
\end{equation}
Fortunately, we can estimate their growth rate near the divisor thanks to the model growth condition.
\begin{eqnarray}
\label{oneform-1}
g^{\bar{\z}\z}\nabla_{\z}\tau_{\bar\z} &=& g^{\bar{\z}\z} (g_{\z\bar{\z}}\nabla_{\z} Y^{\z} + \sum_{k>1}g_{k\bar{\z}}\nabla_{\z}Y^{k})   
\nonumber\\
&= & g^{\bar{\z}\z} g_{\z\bar{\z}} (\d_{\z} Y^{\z} + \Gamma^{\z}_{\z\z} Y^{\z} + \sum_{k>1} \Gamma^{\z}_{\z k} Y^k)
\nonumber\\
&+& \sum_{k>1} g^{\bar{\z}\z} g_{k\bar{\z}} (\d_{\z} Y^k + \Gamma^{k}_{\z\z}Y^{\z} + \sum_{i>1} \Gamma^{k}_{\z i} Y^i).
\end{eqnarray}
Since $Y^{\z}$ is a holomorphic function vanishing on $\{\z= 0 \}$, we can write $Y^{\z}$ locally as
$$ Y^{\z} = \z\cdot f, $$
where $f$ is another local holomorphic function near a point $p\in D$.
Then we can infer that 
$$g^{\bar{\z}\z}\nabla_{\z}\tau_{\bar\z}  \in \cC^{,\a,\b}. $$
For any fixed $k, l>1$, we have similar estimates.
\begin{eqnarray}
\label{oneform-2}
g^{\bar{\z}k}\nabla_{k}\tau_{\bar{\z}} &=& g^{\bar{\z}k}(g_{\z\bar{\z}} \nabla_{k} Y^{\z} + \sum_{i>1} g_{i\bar{\z}}\nabla_k Y^{i})
\nonumber\\
&=& g^{\bar{\z}k}g_{\z\bar{\z}} (\d_k Y^{\z} + \Gamma^{\z}_{k\z} Y^{\z} + \sum_{i>1}\Gamma^{\z}_{ki} Y^i)
\nonumber\\
&+& \sum_{i>1} g^{\bar{\z}k} g_{i\bar{\z}} (\d_{k} Y^i + \Gamma^{i}_{k\z}Y^{\z} + \sum_{m>1} \Gamma_{km}^i Y^m),
\end{eqnarray}
and 
\begin{eqnarray}
\label{oneform-3}
g^{\bar{l}\z} \nabla_{\z} \tau_{\bar l} &=& g^{\bar{l}\z} (g_{\z\bar{l}} \nabla_{\z}Y^{\z} + \sum_{i>1} g_{i\bar l} \nabla_{\z}Y^i )
\nonumber\\
&=& g^{\bar{l}\z} g_{\z\bar{l}} (\d_{\z}Y^{\z} + \Gamma^{\z}_{\z\z} Y^{\z} +\sum_{i>1} \Gamma^{\z}_{\z i} Y^i)
\nonumber\\
&+& \sum_{i>1}g^{\bar{l}\z} g_{i\bar{l}}(\d_{\z}Y^i  + \Gamma^{i}_{\z\z}Y^{\z} + \sum_{m>1}\Gamma^{i}_{\z m} Y^m).
\end{eqnarray}
Moreover,
\begin{eqnarray}
\label{oneform-4}
g^{\bar{l}k}\nabla_{k} \tau_{\bar{l}} &=&  g^{\bar{l}k} \d_{k}\tau_{\bar{l}}
\nonumber\\
&= & g^{\bar{l}k} (\d_{k}g_{\z\bar l} Y^{\z} + g_{\z\bar l} \d_k Y^{\z} ) + \sum_{j>1}g^{\bar lk} (\d_kg_{j\bar l}Y^{j} + g_{j\bar l}\d_k Y^{j}).
\end{eqnarray}
Notice that $\d_k Y^{\z} = \z \d_k f$ still vanishes along the divisor at least up to the first order. 
We conclude from equations (\ref{oneform-2}), (\ref{oneform-3}) and (\ref{oneform-4}) that 
$$ g^{\bar{\z}k}\nabla_k \tau_{\bar{\z}} \in \cC^{,\a,\b},\ \ g^{\bar{l}\z}\nabla_{\z}\tau_{\bar{l}} \in \cC^{,\a,\b}\ \ g^{\bar{l}k}\nabla_{k} \tau_{\bar{l}}\in \cC^{,\a,\b}. $$

In other words, the function $\vartheta\tau$ is in the class $\cC^{,\a,\b}$. Now consider the following Laplacian equation:
\begin{equation}
\label{lap1}
\Delta_{\omega} u = \vartheta\tau. 
\end{equation}
Notice that $\Delta_{\omega}$ is a real operator.
Thanks to the linear theory established in \cite{CZ}, there exists a weak solution $u\in W^{1,2}(\omega)$ satisfying equation (\ref{lap1}). 
Then by Donaldson's estimate (Theorem 2.5, \cite{Br}), we conclude that $u\in \cC^{2,\a,\b}$. 
Indeed, we can lift its regularities to a higher order.
\begin{prop}
\label{prop-reg}
The solution $u$ of the Lapalacian equation $$\Delta_{\omega}u = \vartheta \tau$$ is of class $\cC^{4,\a,\b}$.
\end{prop}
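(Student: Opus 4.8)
The plan is to run the regularity bootstrap of Theorem~\ref{thm-reg} along the same lines, with the linear equation $\Delta_{\omega}u=\vartheta\tau$ playing the role of the Monge--Amp\`ere equation and the source $\vartheta\tau$ playing the role of the Ricci-type term $F$. Since $u\in\cC^{2,\a,\b}$ is already known, Proposition~\ref{prop-C4} reduces the claim to showing $\Delta_{\Omega_0}u\in\cC^{2,\a,\b}$, and, exactly as in the reduction used for Theorem~\ref{thm-reg}, this in turn amounts to the two weighted second-derivative estimates $\d_k\d_{\bar l}u\in\cC^{2,\a,\b}$ (tangential) and $|\z|^{2-2\b}\d_\z\d_{\bar\z}u\in\cC^{2,\a,\b}$ (normal).

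The first task is to upgrade the control on the source. Besides $\vartheta\tau\in\cC^{,\a,\b}$, which has already been read off from the expressions (\ref{oneform-1})--(\ref{oneform-4}), I would show that the relevant weighted first and second derivatives of $\vartheta\tau$ again lie in $\cC^{,\a,\b}$; this is the analogue of the statement $F\in\cC^{2,\a,\b}$ in Theorem~\ref{thm-reg}. The key input is that $\omega$ is $\cC^{4,\a,\b}$, so its inverse metric and volume form are controlled (Lemma~\ref{lem-vol2}), it has the model growth in the 3rd order (Proposition~\ref{prop-chris}), and its curvature tensors are cone admissible (Lemmas~\ref{lem-tangent} and~\ref{lem-normal}); differentiating the explicit formula for $\vartheta\tau$ converts derivatives of the Christoffel symbols into these curvature tensors. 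The holomorphicity of $Y$ is essential here: writing $Y^{\z}=\z f$ with $f$ holomorphic, the vanishing of $Y^{\z}$ along $D$ cancels the singular coefficients $(1-\b)\z^{-1}$ coming from $\tilde\Gamma^{\z}_{\z\z}$ and from $\d_{\z}\log\det g$.

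With the source controlled, I would first recover the model growth in the 3rd order for $u$. Because $\nabla$ is metric compatible, differentiating $\Delta_{\omega}u=\vartheta\tau$ tangentially gives $\Delta_{\omega}(\d_k u)=\d_k(\vartheta\tau)$ modulo cone-admissible commutator terms, so $\Delta_\omega(\d_k u)\in\cC^{,\a,\b}$ and Donaldson's estimate yields $\d_k u\in\cC^{2,\a,\b}$; this is the analogue of Lemmas~\ref{lem-3-2} and~\ref{lem-3tang-2}. Differentiating in the normal direction gives the analogue of (\ref{ma-normal}), namely $g^{\bar\b\a}\nabla_\z\nabla_{\bar\b}\nabla_\a u=\d_\z(\vartheta\tau)$ up to cone-admissible curvature terms; isolating the purely normal third-order term as in Lemma~\ref{lem-3normal-2} then produces $\rho^{3-3\b}\nabla_\z\nabla_{\bar\z}\nabla_\z u\in\cC^{,\a,\b}_0$. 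Thus $u$ has the model growth in the 3rd order.

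For the fourth order I would treat the two directions separately. Tangentially, unlike Lemma~\ref{lem-tang-2} (which exploited the Ricci identity of the metric itself), I differentiate the equation twice: the second-order commutators produce curvature tensors contracted with the now-controlled third derivatives of $u$, together with $\d_k\d_{\bar l}(\vartheta\tau)$, so that $\Delta_\omega(\d_k\d_{\bar l}u)\in\cC^{,\a,\b}$ and Donaldson's estimate gives $\d_k\d_{\bar l}u\in\cC^{2,\a,\b}$. The normal fourth-order estimate is the main obstacle: I would reproduce the computation of Lemma~\ref{lem-normal-2} (equation (\ref{00555})) with $\d_\z\d_{\bar\z}F$ replaced by the weighted second normal derivative of $\vartheta\tau$, using the operator $\nabla^{\phi}$ and the identity $\d_\z\tg^{\bar\eta}_{\bar\z\bar\nu}=R^{\bar\eta}_{\ \bar\z\z\nu}$ to show $\Delta_\omega(|\z|^{2-2\b}\d_\z\d_{\bar\z}u)\in\cC^{,\a,\b}$. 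The delicate point, exactly as in Lemma~\ref{lem-normal-2}, is that the numerous two- and three-tensor remainders must all be verified to be cone admissible after the $\nabla^{\phi}$ cancellations, and that the source $\vartheta\tau$ supplies its weighted second normal derivative in $\cC^{,\a,\b}$. Once both $\d_k\d_{\bar l}u$ and $|\z|^{2-2\b}\d_\z\d_{\bar\z}u$ are of class $\cC^{2,\a,\b}$, Proposition~\ref{prop-C4} yields $u\in\cC^{4,\a,\b}$.
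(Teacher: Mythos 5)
There is a genuine gap at the very first step of your bootstrap: the claim that the ``relevant weighted first and second derivatives of $\vartheta\tau$'' can be shown to lie in $\cC^{,\a,\b}$ by differentiating the explicit formula for $\vartheta\tau$. Writing $\vartheta\tau=\nabla_{\mu}Y^{\mu}=\d_{\mu}Y^{\mu}+Y^{\a}\d_{\a}\log\det g$, a second derivative such as $\d_k\d_{\bar l}(\vartheta\tau)$ contains the term $Y^{j}\,\d_j\d_k\d_{\bar l}\log\det g$, i.e.\ \emph{third} derivatives of the metric coefficients (equivalently fifth derivatives of the K\"ahler potential). These are not controlled by the $\cC^{4,\a,\b}$ regularity of $\omega$: Lemma \ref{lem-vol2} only gives $|\z|^{2-2\b}\det g\in\cC^{2,\a,\b}$, and the cone-admissibility of the curvature tensors handles the antisymmetrized combinations $\d_{\bar l}\d_j\log\det g=-R_{j\bar l}$ but not the additional derivative $\d_k R_{j\bar l}$ that appears. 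The same obstruction blocks the weighted normal second derivative of $\vartheta\tau$ that your analogue of Lemma \ref{lem-normal-2} would require. In the proof of Theorem \ref{thm-reg} the source $F$ acquires its $\cC^{2,\a,\b}$ regularity by \emph{solving} the auxiliary elliptic equation (\ref{ma1}), not by being differentiated; there is no analogous equation for $\vartheta\tau$ available at this stage, so the analogy with the Monge--Amp\`ere bootstrap breaks down exactly where you need it.

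The paper's proof sidesteps this entirely. Since $\gamma=\tau-\dbar u$ is $\dbar$-harmonic off $D$ and $Y$ is holomorphic, the computation (\ref{new4-001}) shows $\cD u=0$ on $X-D$, hence $\Delta_{\omega}^2u=-R^{\bar\b\a}u_{,\a\bar\b}$, whose right-hand side is of class $\cC^{,\a,\b}$ using only the cone-admissibility of the Ricci tensor and $u\in\cC^{2,\a,\b}$. Donaldson's estimate applied to the function $\Delta_{\omega}u$ then gives $\Delta_{\omega}u\in\cC^{2,\a,\b}$, and the rough Schauder estimate (Proposition \ref{prop-sch}, proved via Lemmas \ref{lem-D003}--\ref{lem-D006}) concludes $u\in\cC^{4,\a,\b}$. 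Your final stage (differentiating tangentially and normally and invoking cone-admissible commutators) is essentially the content of those lemmas, so that part of your plan is sound; but it must be fed by $\Delta_{\omega}u\in\cC^{2,\a,\b}$ obtained from the fourth-order equation, not by a direct estimate on $\vartheta\tau$. Note that a posteriori one does get $\vartheta\tau=\Delta_{\omega}u\in\cC^{2,\a,\b}$ --- but only through the equation $\cD u=0$, which is precisely the idea your proposal is missing.
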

In fact, the function $u$ is in the kernel of the so called Lichnerowicz operator $\cD$ of the conic cscK metric $\omega$ (see Section \ref{sub-2}) outside of the divisor.
This is because the projection $\gamma: = \tau - \dbar u$ is a $\dbar$-harmonic $(0,1)$ form, 
and we can utilize equation (\ref{new4-001}). 
Hence we have 
$$\Delta^2_{\omega} u = - R^{\bar\b\a}u_{,\a\bar\b} \in \cC^{,\a,\b}, $$
on $X-D$. Now thanks to Donaldson's estimate and the fact that $\cD$ is a real operator, we further have 
\begin{equation}
\label{new-4-4}
\Delta_{\omega}  u\in \cC^{2,\a,\b}.
\end{equation}
And note that $u$ is smooth outside of the divisor by standard elliptic estimates.
Then Proposition (\ref{prop-reg}) follows directly from the following rough Schauder estimate for the operator $\cD$
\begin{prop}
\label{prop-sch}
Let $\omega$ be a $\cC^{4,\a,\b}$ conic cscK metric, and $\cD$ be the Lichnerowicz operator associated to $\omega$.
Suppose that $u$ is of class $\cC^{2,\a,\b}\cap C^{4}(X-D)$ and $f$ is of class $\cC^{,\a,\b}$ 
such that $\cD u = f$ away from the divisor. Then $u$ is of class $\cC^{4,\a,\b}$. 
\end{prop}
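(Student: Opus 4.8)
The plan is to verify the defining relation of $\cC^{4,\a,\b}$ via Proposition (\ref{prop-C4}), mirroring the proof of Theorem (\ref{thm-reg}): since $u$ is already of class $\cC^{2,\a,\b}$ and smooth on $X-D$, it suffices to prove $\Delta_{\Omega_0}u\in\cC^{2,\a,\b}$, which by the definition of $\cC^{4,\a,\b}$ reduces to the tangential estimate $\d_k\d_{\bar l}u\in\cC^{2,\a,\b}$ and the normal estimate $|\z|^{2-2\b}\d_\z\d_{\bar\z}u\in\cC^{2,\a,\b}$. The starting point is to extract the biharmonic information from $\cD u=f$. Writing the conic Lichnerowicz operator of the cscK metric $\omega$ as $\cD u=\Delta_\omega^2 u + R^{\bar\b\a}u_{,\a\bar\b}$ (the scalar curvature being constant removes the first order term), and recalling that the Ricci curvature of $\omega$ is cone admissible by Corollary (\ref{cor-ric}) while $u_{,\a\bar\b}$ is cone admissible because $u\in\cC^{2,\a,\b}$, the contraction $R^{\bar\b\a}u_{,\a\bar\b}$ lies in $\cC^{,\a,\b}$. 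Hence $\Delta^2_\omega u = f - R^{\bar\b\a}u_{,\a\bar\b}\in\cC^{,\a,\b}$ on $X-D$.

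Next I would bootstrap with Donaldson's Schauder estimate for the conic Laplacian of $\omega$. Since $u\in\cC^{2,\a,\b}$ and the inverse metric is controlled by Lemma (\ref{lem-vol2}), the function $w:=\Delta_\omega u$ is already of class $\cC^{,\a,\b}$; applying the estimate to $\Delta_\omega w=\Delta^2_\omega u\in\cC^{,\a,\b}$ upgrades this to $w\in\cC^{2,\a,\b}$, which is exactly relation (\ref{new-4-4}). I would then establish the third-order model growth of $u$ precisely as in Lemmas (\ref{lem-3-2})--(\ref{lem-3normal-2}), but with $w$ in place of $F$: differentiating $\Delta_\omega u=w$ once in a tangential direction gives $\Delta_\omega(\d_k u)=\d_k w-(\d_k g^{\bar\b\a})u_{,\a\bar\b}\in\cC^{,\a,\b}$, so $\d_k u\in\cC^{2,\a,\b}$ and the tangential third derivatives of $u$ are under control; while the covariant identity $g^{\bar\b\a}\nabla_\z\nabla_{\bar\b}\nabla_\a u=\d_\z w$, the exact analogue of (\ref{ma-normal}), together with $\rho^{1-\b}\d_\z w\in\cC^{,\a,\b}_0$ (as $w\in\cC^{2,\a,\b}$), yields the normal third-order estimate $\rho^{3-3\b}\d^{\phi}_\z u_{,\z\bar\z}\in\cC^{,\a,\b}_0$.

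With the full third-order growth of $u$ in hand, the fourth-order estimates follow the two lemmas that prove Theorem (\ref{thm-reg}). For the tangential estimate I would differentiate $\Delta_\omega u=w$ twice tangentially; the commutator $[\d_k\d_{\bar l},\Delta_\omega]u$ produces schematically terms $(\d g)(\d^3 u)$ and $(\d^2 g)(\d^2 u)$, all cone admissible — the first by the third-order estimates just established, the second because $\d_k\d_{\bar l}g_{\mu\bar\nu}$ differs from the cone admissible curvature $R_{k\bar l\mu\bar\nu}$ by cone admissible products. Thus $\Delta_\omega(u_{,k\bar l})\in\cC^{,\a,\b}$ and Donaldson's estimate gives $u_{,k\bar l}\in\cC^{2,\a,\b}$. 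For the normal estimate I would apply $\nabla_\z$ and a conjugate normal derivative to $\Delta_\omega u=w$ and expand exactly as in the computation (\ref{00555}), so that $|\z|^{2-2\b}\d_\z\d_{\bar\z}w$ equals $|\z|^{2\b-2}\Delta_\omega(|\z|^{2-2\b}\d_\z\d_{\bar\z}u)$ plus two- and three-tensor remainders built from the cone admissible curvatures $R_{\mu\bar\b\z\bar\z}$ of Lemma (\ref{lem-normal}) and the third derivatives of $u$; since $|\z|^{2-2\b}\d_\z\d_{\bar\z}w\in\cC^{,\a,\b}$, this forces $\Delta_\omega(|\z|^{2-2\b}\d_\z\d_{\bar\z}u)\in\cC^{,\a,\b}$ and hence $|\z|^{2-2\b}\d_\z\d_{\bar\z}u\in\cC^{2,\a,\b}$. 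Combining the two estimates gives $\Delta_{\Omega_0}u\in\cC^{2,\a,\b}$, i.e. $u\in\cC^{4,\a,\b}$.

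The main obstacle is the normal fourth-order estimate: unlike the tangential directions it is not a direct application of Donaldson's estimate to a differentiated equation, but requires reorganising every normal derivative through the twisted operator $\d^{\phi}$ and the modified Christoffel tensor $\tilde\Gamma^{\z}_{\z\z}$, and then checking that after this reorganisation the singular factors $\z^{-1}$ cancel against the weights $|\z|^{2-2\b}$ so that the remainder terms land in $\cC^{,\a,\b}_0$. This is precisely the delicate bookkeeping carried out in (\ref{00555}), and it is where the cone admissibility of the full normal curvature tensor $R_{\mu\bar\b\z\bar\z}$ and the second estimate of Lemma (\ref{lem-3rd-1}) are essential.
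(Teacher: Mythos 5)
Your proposal is correct and follows essentially the same route as the paper: first extract $\Delta_\omega^2 u = f - R^{\bar\b\a}u_{,\a\bar\b}\in\cC^{,\a,\b}$ and upgrade to $\Delta_\omega u\in\cC^{2,\a,\b}$ by Donaldson's estimate, then run the tangential bootstrap (the paper's Lemmas \ref{lem-D003}--\ref{lem-D004}) and the normal bootstrap through $\d^{\phi}$ and the modified Christoffel tensors (Lemmas \ref{lem-D005}--\ref{lem-D006}, computation (\ref{005})), and finally combine. The only cosmetic difference is that you cite the Monge--Amp\`ere-based Lemmas \ref{lem-3-2}--\ref{lem-3normal-2} as the template for the third-order step, whereas the paper's actual argument is the parallel chain of Lemmas obtained by differentiating $\Delta_\omega u = w$ directly, which is exactly what you in fact carry out.
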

 
Note that $\Delta_{\omega}u$ is again of class $\cC^{2,\a,\b}$ by the same argument with above, and we will prove that this condition (equation \ref{new-4-4}) implies that $u$ is of class $\cC^{4,\a,\b}$. 
 
\begin{lemma}
\label{lem-D003}
For each fixed $j>1$, we have
$$\d_j u \in \cC^{2,\a,\b}.$$
\end{lemma}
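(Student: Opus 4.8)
The plan is to reduce the statement to a growth bound on the $\omega$-Laplacian of $\partial_j u$ and then invoke the conic Schauder estimate. We are given $w:=\Delta_\omega u\in\cC^{2,\a,\b}$ by (\ref{new-4-4}), and $\partial_j u\in\cC^{,\a,\b}$ since $u\in\cC^{2,\a,\b}$. By Donaldson's Schauder estimate for the conic Laplacian (the same tool applied above to (\ref{lap1})), it is enough to show that $\Delta_\omega(\partial_j u)\in\cC^{,\a,\b}$.

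First I would differentiate the identity $g^{\bar\b\a}u_{,\a\bar\b}=w$ in the tangential direction $\partial_j$. Because $j>1$ and $\rho=|\z|$ does not depend on $z_j$, the partial derivatives commute freely and no weight is differentiated, giving on $X-D$
$$ \Delta_\omega(\partial_j u)=g^{\bar\b\a}\d_\a\d_{\bar\b}(\partial_j u)=\partial_j w-(\partial_j g^{\bar\b\a})\,u_{,\a\bar\b}. $$
Here $\partial_j w\in\cC^{,\a,\b}$ because $w\in\cC^{2,\a,\b}$, so the whole problem is to show that each of the four pieces of $(\partial_j g^{\bar\b\a})u_{,\a\bar\b}$ lies in $\cC^{,\a,\b}$.

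These I would handle by matching the growth of the inverse-metric coefficients against that of the mixed Hessian of $u$. The pure tangential piece is immediate: $\partial_j g^{\bar l k}\in\cC^{,\a,\b}$ and $u_{,k\bar l}\in\cC^{,\a,\b}$. For the normal piece I write $(\partial_j g^{\bar\z\z})u_{,\z\bar\z}=(\rho^{2\b-2}\partial_j g^{\bar\z\z})(\rho^{2-2\b}u_{,\z\bar\z})$; the first factor is in $\cC^{,\a,\b}$ since $\rho^{2\b-2}g^{\bar\z\z}\in\cC^{2,\a,\b}$ (Lemma \ref{lem-vol2}) and $\partial_j\rho=0$, and the second is in $\cC^{,\a,\b}$. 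The delicate terms are the two mixed ones, say $(\partial_j g^{\bar\z k})u_{,k\bar\z}$, where both factors are singular: $u_{,k\bar\z}=O(\rho^{\b-1})$ from $\rho^{1-\b}u_{,k\bar\z}\in\cC^{,\a,\b}_0$. The key is the sharp decay $g^{\bar\z k}=O(\rho^{1-\b})$, i.e. $\rho^{\b-1}g^{\bar\z k}\in\cC^{2,\a,\b}$; granting it, $(\partial_j g^{\bar\z k})u_{,k\bar\z}=(\rho^{\b-1}\partial_j g^{\bar\z k})(\rho^{1-\b}u_{,k\bar\z})$ is a product of a $\cC^{,\a,\b}$ factor and a $\cC^{,\a,\b}_0$ factor, hence in $\cC^{,\a,\b}$, and the conjugate term $(\partial_j g^{\bar l\z})u_{,\z\bar l}$ is identical.

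The main obstacle is exactly this sharp control of the mixed inverse-metric coefficients: the crude cone-admissibility bound $g^{\bar\z k}=O(\rho^{\b-1})$ is useless here, since it would make the product $O(\rho^{2\b-2})$ blow up. I would therefore derive $\rho^{\b-1}g^{\bar\z k}\in\cC^{2,\a,\b}$ from the cofactor expansion of the inverse metric, exactly as in the proof of Lemma \ref{lem-vol2}, using $\rho^{2\b-2}g^{\bar\z\z}\in\cC^{2,\a,\b}$, $g^{\bar l k}\in\cC^{2,\a,\b}$ and the off-diagonal growth of $g_{\z\bar l}$. Once all four pieces are placed in $\cC^{,\a,\b}$, we obtain $\Delta_\omega(\partial_j u)\in\cC^{,\a,\b}$, and the conic Schauder estimate upgrades $\partial_j u$ from $\cC^{,\a,\b}$ to $\cC^{2,\a,\b}$, which is the claim.
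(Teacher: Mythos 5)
Your argument is correct and is essentially the paper's own proof: you differentiate $\Delta_\omega u=w$ tangentially and must absorb the commutator term $(\d_j g^{\bar\b\a})u_{,\a\bar\b}=-g^{\bar\b\mu}\Gamma^{\a}_{j\mu}u_{,\a\bar\b}$, which the paper dispatches in one stroke via cone-admissibility of the Christoffel symbols $\Gamma^{\g}_{j\a}$ before invoking Donaldson's estimate, while you verify the same decay matching coefficient by coefficient. The one blemish is the intermediate claim $\rho^{\b-1}g^{\bar\z k}\in\cC^{2,\a,\b}$, which is stronger than the cofactor expansion delivers (cf.\ the remark after Lemma \ref{lem-model}: $\rho^{1-\b}g_{\z\bar l}$ need not be of class $\cC^{2,\a,\b}$); but since you only apply the tangential derivative $\d_j$, which does not hit the weight or the normal directions, the true statements $\rho^{\b-1}g^{\bar\z k}\in\cC^{,\a,\b}$ and $\rho^{\b-1}\d_j g^{\bar\z k}\in\cC^{,\a,\b}$ suffice and your proof stands.
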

\begin{proof}
Take the one derivative in $k$ direction with repect to the Laplacian equation
$$ \d_j (\Delta_{\omega} u) = g^{\bar\b\a}\d_j u_{,\a\bar\b} - g^{\bar\b\a}(\Gamma^{\gamma}_{j\a} u_{,\gamma\bar\b}) \in \cC^{,\a,\b}.$$
Since the tensor $\Gamma^{\g}_{j\a}$ is cone admissible for all $\g, \a$, we have 
$$ \Delta_{\omega} (\d_j u) = g^{\bar\b\a}\d_j u_{,\a\bar\b} \in \cC^{,\a,\b}, $$ 
and the lemma follows from Donaldson's estimate. 
\end{proof}

\begin{lemma}
\label{lem-D004}
For each $j>1$, we have 
$$ \d_j\d_{\bar j} u \in \cC^{2,\a,\b}.$$
\end{lemma}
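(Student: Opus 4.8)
The goal is to upgrade $\d_j\d_{\bar j}u$ from $\cC^{,\a,\b}$ to $\cC^{2,\a,\b}$ by the same mechanism as in Lemma (\ref{lem-tang-2}): since $u\in\cC^{2,\a,\b}$ and $j>1$ we already have $\d_j\d_{\bar j}u\in\cC^{,\a,\b}$, so by Donaldson's estimate it suffices to prove
$$ \Delta_{\omega}(\d_j\d_{\bar j}u) \in \cC^{,\a,\b}. $$
The plan is to extract this Laplacian from the hypothesis (\ref{new-4-4}), namely $f:=\Delta_{\omega}u\in\cC^{2,\a,\b}$, by differentiating the defining relation $\Delta_{\omega}u=g^{\bar\b\a}\d_\a\d_{\bar\b}u=f$ twice in the tangential directions.

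The computation I would carry out uses \emph{ordinary} partial derivatives, so that no curvature enters. Applying $\d_j\d_{\bar j}$ to $f=g^{\bar\b\a}\d_\a\d_{\bar\b}u$ and expanding by the Leibniz rule gives
$$ \d_j\d_{\bar j}f = g^{\bar\b\a}\,\d_j\d_{\bar j}\d_\a\d_{\bar\b}u + (\d_j g^{\bar\b\a})\,\d_{\bar j}\d_\a\d_{\bar\b}u + (\d_{\bar j}g^{\bar\b\a})\,\d_j\d_\a\d_{\bar\b}u + (\d_j\d_{\bar j}g^{\bar\b\a})\,\d_\a\d_{\bar\b}u. $$
Since partial derivatives commute, the first term on the right is exactly $\Delta_{\omega}(\d_j\d_{\bar j}u)$, whence
$$ \Delta_{\omega}(\d_j\d_{\bar j}u) = \d_j\d_{\bar j}f - (\d_j g^{\bar\b\a})\,\d_\a\d_{\bar\b}(\d_{\bar j}u) - (\d_{\bar j}g^{\bar\b\a})\,\d_\a\d_{\bar\b}(\d_j u) - (\d_j\d_{\bar j}g^{\bar\b\a})\,\d_\a\d_{\bar\b}u, $$
where I have reordered the partials in the last three terms to display them as Hessians of $\d_{\bar j}u$, $\d_j u$ and $u$ respectively. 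The advantage of working with ordinary $\d$ rather than $\nabla$ is precisely that this identity is free of curvature and of derivatives of curvature, which a covariant commutation of the four derivatives would otherwise produce.

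It then remains to check that each term on the right lies in $\cC^{,\a,\b}$. The term $\d_j\d_{\bar j}f$ is of class $\cC^{,\a,\b}$ because $f\in\cC^{2,\a,\b}$ and $j>1$. For the remaining terms I would invoke the cone-admissibility calculus. By Lemma (\ref{lem-vol2}) the rescaled inverse-metric coefficients $|\z|^{2\b-2}g^{\bar\z\z}$ and $g^{\bar lk}$ are of class $\cC^{2,\a,\b}$ (and the mixed entries carry the weight $\rho^{1-\b}$ by the same computation), so their tangential first and second derivatives $\d_j g^{\bar\b\a}$, $\d_{\bar j}g^{\bar\b\a}$, $\d_j\d_{\bar j}g^{\bar\b\a}$ are cone admissible; by Lemma (\ref{lem-D003}) and conjugation both $\d_j u$ and $\d_{\bar j}u$ belong to $\cC^{2,\a,\b}$, so their Hessians $\d_\a\d_{\bar\b}(\d_j u)$ and $\d_\a\d_{\bar\b}(\d_{\bar j}u)$ are cone admissible; and $u\in\cC^{2,\a,\b}$ makes $\d_\a\d_{\bar\b}u$ cone admissible. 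Each of the three products is therefore a contraction of cone-admissible tensors against $g^{\bar\b\a}$ with matching weights, hence a cone-admissible scalar, i.e. a function of class $\cC^{,\a,\b}$. This gives $\Delta_{\omega}(\d_j\d_{\bar j}u)\in\cC^{,\a,\b}$, and Donaldson's estimate completes the proof.

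The step I expect to be the main obstacle is the bookkeeping of the $\rho$-weights in the normal direction: when the summation index in $g^{\bar\b\a}(\cdots)$ runs over $\z$, one must pair the singular inverse-metric entry $|\z|^{2\b-2}g^{\bar\z\z}$ (and the mixed entries carrying a factor $\rho^{1-\b}$) against the correspondingly degenerate Hessian components $\rho^{2-2\b}\d_\z\d_{\bar\z}u$ and $\rho^{1-\b}\d_\z\d_{\bar l}u$, and likewise for the Hessians of $\d_j u$ and $\d_{\bar j}u$. Verifying that every such pairing has total weight zero and lands in $\cC^{,\a,\b}$ — rather than merely in some $O(\rho^{\a\b})$ class — is the delicate point; the tangential contributions, by contrast, are immediate. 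The fact that the two extra derivatives are purely tangential ($j>1$) is what keeps all the $\rho$-weights unchanged and reduces the estimate to the already-established second-order bounds of Lemma (\ref{lem-vol2}) and (\ref{lem-D003}).
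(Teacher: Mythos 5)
Your identity is correct and your strategy is in substance the same as the paper's: the paper also applies $\d_j\d_{\bar j}$ to $\Delta_\omega u = g^{\bar\b\a}u_{,\a\bar\b}$, extracts $\Delta_{\omega}(u_{,j\bar j})$ as the principal term, and shows the remainder is cone admissible. The only difference is bookkeeping: the paper organizes the remainder covariantly, so the error terms are $\Gamma^{\g}_{j\a}\times(\text{third derivatives})$, the curvature term $R^{\bar\eta\ \ \a}_{\ \bar j j}u_{,\a\bar\eta}$, and a $\Gamma\cdot\Gamma\cdot u_{,\g\bar\eta}$ term, rather than your $\d_j g^{\bar\b\a}$, $\d_{\bar j} g^{\bar\b\a}$ and $\d_j\d_{\bar j}g^{\bar\b\a}$. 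The two expansions are term-by-term equivalent via $\d_j g^{\bar\b\a}=-g^{\bar\b\mu}\Gamma^{\a}_{j\mu}$, and your claim that the partial-derivative route is ``free of curvature'' is only cosmetic: $\d_j\d_{\bar j}g^{\bar\b\a}$ \emph{is} the curvature with indices $j,\bar j$ up to Christoffel products.

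The one step whose justification does not go through as written is the cone admissibility of $\d_j g^{\bar\b\a}$ and especially $\d_j\d_{\bar j}g^{\bar\b\a}$ for the \emph{mixed} components ($\a=\z$, $\b=l$ and $\a=k$, $\b=\z$). Lemma (\ref{lem-vol2}) only asserts $|\z|^{2\b-2}g^{\bar\z\z}\in\cC^{2,\a,\b}$ and $g^{\bar lk}\in\cC^{2,\a,\b}$; your parenthetical claim that ``the mixed entries carry the weight $\rho^{1-\b}$ by the same computation'' is not established anywhere, and the remark following Lemma (\ref{lem-model}) explicitly warns that the analogous weighted mixed second derivative of the model potential is \emph{not} of class $\cC^{2,\a,\b}$, so one should not expect $\rho^{\b-1}g^{\bar l\z}\in\cC^{2,\a,\b}$ and then differentiate it twice. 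The repair is to avoid treating the inverse metric as a $\cC^{2,\a,\b}$ object: write $\d_j g^{\bar\b\a}=-g^{\bar\b\mu}g^{\bar\nu\a}\d_j g_{\mu\bar\nu}$ (and expand $\d_{\bar j}\d_j g^{\bar\b\a}$ likewise), then use that $\d_j g_{\mu\bar\nu}$ is cone admissible because the metric has model growth in the 3rd order, and that the curvature components $R_{\mu\bar\b j\bar j}$ are cone admissible by Lemma (\ref{lem-tangent}). With that substitution your argument coincides with the paper's proof.
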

\begin{proof}
It's enough to prove that $\Delta_{\omega}(\d_j\d_{\bar j} u)$ is of class $\cC^{,\a,\b}$,
and we have 
\begin{eqnarray}
\label{four}
 \d_j\d_{\bar j} (\Delta_{\omega} u) &=& g^{\bar\b\a} \nabla_j\nabla_{\bar j} u_{,\a\bar\b}
 \nonumber\\
 &=& g^{\bar\b\a} (\d_j\d_{\bar j}u_{,\a\bar\b} - \Gamma^{\g}_{j\a} \d_{\bar j} u_{,\g\bar\b} - \Gamma^{\bar\eta}_{\bar j\bar\b} \d_{j} u_{,\a\bar\eta}
 \nonumber\\
 &-&  ( \d_j \Gamma^{\bar\eta}_{\bar j\bar\b} ) u_{,\a\bar\eta} + \Gamma^{\bar\eta}_{\bar j\bar\b} \Gamma^{\g}_{j\a} u_{,\g\bar\eta})
 \nonumber\\
 & = & \Delta_{\omega}(u_{,j\bar j}) - g^{\bar\b\a}\Gamma^{\g}_{j\a} \d_{\bar j} u_{,\g\bar\b} - g^{\bar\b\a}\Gamma^{\bar\eta}_{\bar j\bar\b} \d_{j} u_{,\a\bar\eta}
 \nonumber\\
 & - & R^{\bar\eta\ \ \a}_{\ \bar j j}u_{,\a\bar\eta} + g^{\bar\b\a}\Gamma^{\bar\eta}_{\bar j\bar\b} \Gamma^{\g}_{j\a} u_{,\g\bar\eta}
\end{eqnarray}
Note that all third order terms $\d_{\bar j} u_{,\g\bar\b},  \d_{j} u_{,\a\bar\eta}$ are cone admissible by Lemma (\ref{lem-D003}),
and all Chritoffel symbols with an index $j$ and all curvature tensors $R_{\mu\bar j j\bar\b}$ are cone admissible by Lemma (\ref{lem-tangent}). 
Therefore, the result follows since $\d_j\d_{\bar j} (\Delta_{\omega} u) \in \cC^{,\a,\b}$.
\end{proof}

\begin{lemma}
\label{lem-D005}
The following three tensors 
$$ \d_{\z} u_{,k\bar\eta},\ \ \d^{\phi}_{\z} u_{,\z\bar l},\ \ \d^{\phi}_{\z} u_{,\z\bar\z}  $$
for $k, l>1$ and all $\eta$ are cone admissible.
\end{lemma}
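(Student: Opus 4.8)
The plan is to reduce all three estimates to the single scalar hypothesis $\Delta_\omega u\in\cC^{2,\a,\b}$ (equation (\ref{new-4-4})) together with the tangential third-order control from Lemma (\ref{lem-D003}), treating the tensors in increasing order of difficulty. First I would dispose of $\d_\z u_{,k\bar\eta}$, which is purely a matter of reading off derivatives. Lemma (\ref{lem-D003}) gives $\d_k u\in\cC^{2,\a,\b}$ for each $k>1$ (and, by conjugation, $\d_{\bar k}u\in\cC^{2,\a,\b}$), so the full $(1,1)$-Hessian $\ddbar(\d_k u)$ lies in $\cC^{,\a,\b}$; its coefficients are precisely $\rho^{1-\b}\d_\z\d_{\bar l}\d_k u\in\cC^{,\a,\b}_0$ and $\rho^{2-2\b}\d_\z\d_{\bar\z}\d_k u\in\cC^{,\a,\b}$, which by commuting partials are $\d_\z u_{,k\bar l}$ and $\d_\z u_{,k\bar\z}$. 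The same step, combined with the model growth of the Christoffel symbols of $\omega$ (Proposition (\ref{prop-chris})), also shows that every third covariant derivative of $u$ carrying at least one tangential holomorphic index --- in particular $(D^3u)(\d_\z,\d_{\bar\z},\d_k)$ and $(D^3u)(\d_\z,\d_{\bar l},\d_k)$ --- is cone admissible, a fact I will use freely below.

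For the second tensor $\d^\phi_\z u_{,\z\bar l}$ I would set $v:=\d_{\bar l}u$, a $\cC^{,\a,\b}$ function whose normal Hessian $|\z|^{2-2\b}\d_\z\d_{\bar\z}v=|\z|^{2-2\b}\d_\z\d_{\bar\z}\d_{\bar l}u$ is cone admissible by the first step. Brendle's appendix estimate \cite{Br} --- the one already invoked in Lemma (\ref{lem-3-2}) --- then upgrades this control of the normal Hessian, under the small angle condition, to a growth bound on the normal derivative $\d^\phi_\z\d_\z v$, yielding $\rho^{2-2\b}\d^\phi_\z u_{,\z\bar l}=O(\rho^{\a\b})$, which is the required cone admissibility.

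Finally, for $\d^\phi_\z u_{,\z\bar\z}$ I would differentiate the hypothesis covariantly in the normal direction. Using $\nabla_\z g^{\bar\b\a}=0$, exactly as in equation (\ref{ma-normal}) but with $F$ replaced by $\Delta_\omega u$, one obtains
\[
g^{\bar\b\a}(D^3u)(\d_\z,\d_{\bar\b},\d_\a)=\d_\z(\Delta_\omega u),
\]
whose right-hand side is cone admissible since $\Delta_\omega u\in\cC^{2,\a,\b}$. Splitting the sum over $\{\z\}\cup\{k>1\}$, every summand except $g^{\bar\z\z}(D^3u)(\d_\z,\d_{\bar\z},\d_\z)$ is already cone admissible: the $g^{\bar\z k}$ and $g^{\bar lk}$ terms carry a tangential holomorphic index (first step), while the $g^{\bar l\z}(D^3u)(\d_\z,\d_{\bar l},\d_\z)$ term is controlled by the second step after converting $\d^\phi_\z u_{,\z\bar l}$ to $\nabla_\z u_{,\z\bar l}$ through (\ref{1111}), the mixed inverse-metric entries being handled by the same cofactor expansion as in Lemma (\ref{lem-vol2}). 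Hence $g^{\bar\z\z}(D^3u)(\d_\z,\d_{\bar\z},\d_\z)$ is cone admissible, and dividing by $g^{\bar\z\z}$ --- whose reciprocal is nondegenerate with the correct weight by Lemma (\ref{lem-vol2}) --- makes $(D^3u)(\d_\z,\d_{\bar\z},\d_\z)=\nabla_\z u_{,\z\bar\z}$ cone admissible. Passing to $\d^\phi_\z u_{,\z\bar\z}=\nabla_\z u_{,\z\bar\z}+\tilde\Gamma^\gamma_{\z\z}u_{,\gamma\bar\z}$ via (\ref{1111}), with the correction cone admissible because $\tilde\Gamma^\gamma_{\z\z}$ is (Remark (\ref{rem-chirs})) and $u_{,\gamma\bar\z}$ is, then gives the third tensor.

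The hard part is the genuinely normal direction. The differentiated Laplacian is a single scalar identity that entangles the two essentially normal third derivatives $(D^3u)(\d_\z,\d_{\bar l},\d_\z)$ and $(D^3u)(\d_\z,\d_{\bar\z},\d_\z)$, so neither can be isolated from it alone; the whole scheme hinges on pinning down the mixed-normal tensor independently through Brendle's appendix and the small angle condition, and only afterwards extracting the fully normal one. Keeping the weights consistent while passing between $\d$, $\d^\phi$ and $\nabla$ --- absorbing the $(1-\b)\z^{-1}$ terms into $\d^\phi$ and the residual Christoffel contributions into the cone-admissible $\tilde\Gamma$ --- is the delicate bookkeeping that makes the argument work.
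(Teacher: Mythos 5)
Your argument reproduces the paper's proof step for step: the tangential tensors come from Lemma (\ref{lem-D003}), the mixed tensor $\d^{\phi}_{\z}u_{,\z\bar l}$ comes from Brendle's appendix applied to $v=\d_{\bar l}u$ under the small angle condition, and the fully normal tensor is obtained by expanding $\d_\z(\Delta_{\omega}u)=g^{\bar\b\a}\nabla^{\phi}_{\z}u_{,\a\bar\b}$, isolating the $g^{\bar\z\z}\d^{\phi}_{\z}u_{,\z\bar\z}$ term, and inverting $g^{\bar\z\z}$ via the cofactor expansion of Lemma (\ref{lem-vol2}). The only cosmetic difference is that you pass through $\nabla_{\z}u_{,\z\bar\z}$ and convert to $\d^{\phi}_{\z}$ at the end, whereas the paper carries the $\d^{\phi}$/$\tgg$ bookkeeping throughout; the substance is identical.
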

\begin{proof}
First, all three tensors $$ \d_{\z} u_{,k\bar\eta} = \d_{\z}\d_{\bar\eta} u_{,k}$$ are cone admissible by Lemma (\ref{lem-D003}). 
It also implies that $$ |\z|^{2-2\b} \d_{\z}\d_{\bar\z} u_{,\bar l} \in \cC^{,\a,\b},$$ for all $l>1$. 
Thanks to Brendle's estimate and small angle condition, we have 
$$ |\z|^{2-2\b}\d^{\phi}_{\z}\d_{\z} u_{,\bar l} = |\z|^{2-2\b}(\d_{\z}\d_{\z} u_{,\bar l} + (1-\b)\d_\z u_{,\bar l}) \in \cC^{,\a,\b}_0. $$
Now we can compute 
\begin{eqnarray}
\label{D555}
\d_\z (\Delta_{\omega} u) &=& g^{\bar\b\a} \nabla^{\phi}_{\z} u_{,\a\bar\b}
\nonumber\\
& = & g^{\bar\z\z} ( \d^{\phi}_{\z} u_{,\z\bar\z} - \tgg^{\g}_{\z\z} u_{,\g\bar\z}  ) + g^{\bar l \z} ( \d^{\phi}_{\z} u_{,\z\bar l} - \tgg^{\g}_{\z\z} u_{,\g\bar l }  )
\nonumber\\
& +& g^{\bar\b k} (\d_\z u_{,k\bar\b} - \Gamma^{\g}_{\z k} u_{,\g\bar\b}).
\end{eqnarray}
Now all the Christoffel symbols are cone admissible, and all third order terms $\d_\z u_{k\bar\b}, \d^{\phi}_{\z} u_{,\z\bar l} $ are cone admissible.
It follows that $g^{\bar\z\z} \d^{\phi}_{\z} u_{,\z\bar\z}$ is cone admissible, since $|\z|^{1-\b}\d_{\z}(\Delta_{\omega} u) \in \cC^{,\a,\b}_0$. 
That is to say, the function $ v: = |\z|^{1-\b} g^{\bar\z\z} \d^{\phi}_{\z} u_{,\z\bar\z}$ is of class $\cC^{,\a,\b}$, and it implies that the last tensor 
$$ |\z|^{3-3\b} \d^{\phi}_{\z} u_{,\z\bar\z} =   v \cdot  \{ |\z|^{2-2\b}(g_{\z\bar{\z}} - \sum_{k,l>1} (G_{\z\bar{\z}})^{\bar{l} k} g_{\z\bar{l}}g_{k\bar{\z}} ) \}$$
is of class $\cC^{,\a,\b}$ by Lemma (\ref{lem-vol2}). 
\end{proof}

\begin{lemma}
\label{lem-D006}
We have 
$$ |\z|^{2-2\b} \d_\z\d_{\bar\z}u \in \cC^{2,\a,\b}.$$
\end{lemma}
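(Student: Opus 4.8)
The plan is to follow the proof of Lemma (\ref{lem-normal-2}) and reduce everything to a bound on the conic Laplacian. Since $u\in\cC^{2,\a,\b}$ already gives $|\z|^{2-2\b}\d_\z\d_{\bar\z}u\in\cC^{,\a,\b}$, by Donaldson's estimate it is enough to prove that
$$ \Delta_{\omega}\left( |\z|^{2-2\b}\d_\z\d_{\bar\z}u \right)\in\cC^{,\a,\b}. $$
First I would run the computation (\ref{00555}) essentially verbatim, except that now $\nabla$ denotes the Chern connection of $\omega$ itself rather than that of $\Omega$. Rewriting the covariant derivatives through the operators $\d^{\phi},\nabla^{\phi}$ isolates the principal part and yields the identity
$$ g^{\bar\b\a}\nabla_{\z}\nabla_{\bar\z}u_{,\a\bar\b} = |\z|^{2\b-2}\Delta_{\omega}\left( |\z|^{2-2\b}\d_\z\d_{\bar\z}u \right) + (\text{two and three tensors}), $$
where the correction terms are built from products of the Christoffel symbols $\tgg$, their derivatives (curvature tensors), and the second and third covariant derivatives of $u$.

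The crucial simplification, absent in Lemma (\ref{lem-normal-2}) where $\nabla$ was the $\Omega$-connection, is that now $\nabla g=0$; together with the vanishing of the mixed Christoffel symbols this gives
$$ g^{\bar\b\a}\nabla_{\z}\nabla_{\bar\z}u_{,\a\bar\b} = \nabla_{\z}\nabla_{\bar\z}\left( g^{\bar\b\a}u_{,\a\bar\b} \right) = \d_\z\d_{\bar\z}(\Delta_{\omega}u). $$
Hence the weighted quantity $|\z|^{2-2\b}\d_\z\d_{\bar\z}(\Delta_{\omega}u)$ lies in $\cC^{,\a,\b}$ precisely because $\Delta_{\omega}u\in\cC^{2,\a,\b}$ by (\ref{new-4-4}). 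Rearranging the previous identity and multiplying through by $|\z|^{2-2\b}$ therefore reduces the problem to showing that $|\z|^{2-2\b}$ times each of the two and three tensors is of class $\cC^{,\a,\b}$.

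This last check is exactly the cone-admissibility bookkeeping already carried out for Lemma (\ref{lem-normal-2}): the derivatives of the Christoffel symbols appear as curvature tensors $\d_{\z}\tgg^{\bar\eta}_{\bar\z\bar\nu}=R^{\bar\eta}_{\ \bar\z\z\nu}$, which are cone admissible by Lemma (\ref{lem-normal}), while every product of a Christoffel symbol with a second- or third-order covariant derivative of $u$ is cone admissible by Lemmas (\ref{lem-D003}), (\ref{lem-D004}) and (\ref{lem-D005}). The main obstacle I expect is the weight accounting in the normal--normal direction, where the individual derivatives $\d_\z\d_{\bar\z}\d_\z u$ are too singular on their own; it is precisely the estimate $\rho^{3-3\b}\d^{\phi}_{\z}u_{,\z\bar\z}\in\cC^{,\a,\b}_0$ furnished by Lemma (\ref{lem-D005}), combined with the $\d^{\phi},\nabla^{\phi}$ formalism, that keeps each correction term at the right order $O(|\z|^{2\b-2})$ so that it genuinely lands in $\cC^{,\a,\b}$ after multiplication by $|\z|^{2-2\b}$.
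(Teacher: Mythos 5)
Your proposal follows the paper's own argument essentially verbatim: both reduce to showing $\Delta_{\omega}(|\z|^{2-2\b}\d_\z\d_{\bar\z}u)\in\cC^{,\a,\b}$, start from the identity $\d_{\z}\d_{\bar\z}(\Delta_{\omega}u)=g^{\bar\b\a}\nabla^{\phi}_{\z}\nabla^{\phi}_{\bar\z}u_{,\a\bar\b}$ (valid since $\nabla g=0$ for the $\omega$-connection), expand via the $\d^{\phi},\nabla^{\phi}$ formalism, and dispose of the two- and three-tensor corrections by cone admissibility (Lemma \ref{lem-D005}, Remark \ref{rem-chirs}, Lemma \ref{lem-normal}) while the surviving fourth-order terms reassemble into $\Delta_{\omega}(|\z|^{2-2\b}u_{,\z\bar\z})$. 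The argument is correct and is the same as equation (\ref{005}) in the paper.
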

\begin{proof}
It's enough to prove $\Delta_{\omega}(|\z|^{2-2\b} \d_\z\d_{\bar\z}u)$ is a $\cC^{,\a,\b}$ function. 
But we have 
\begin{eqnarray}
\label{005}
\d_{\z}\d_{\bar\z} (\Delta_{\omega} u) &=& g^{\bar\b\a} \nabla^{\phi}_{\z}\nabla^{\phi}_{\bar\z} u_{,\a\bar\b}
\nonumber\\
&=&   g^{\bar\z\a}\nabla^{\phi}_{\z} ( \d^{\phi}_{\bar\z}u_{,\a\bar\z} - \tgg^{\bar\eta}_{\bar\z\bar\z} u_{,\a\bar\eta} ) 
+   \sum_{l>1}g^{\bar l\a}\nabla^{\phi}_{\z} ( \d_{\bar\z}u_{,\a\bar l} - \tg^{\bar\eta}_{\bar\z\bar l} u_{,\a\bar\eta} ) 
\nonumber\\
&=& g^{\bar\z\z} ( \d^{\phi}_{\z}\d^{\phi}_{\bar\z}u_{,\z\bar\z} - (\d_{\z}\tgg^{\bar\eta}_{\bar\z\bar\z}) u_{,\z\bar\eta} - \tgg^{\g}_{\z\z} \d^{\phi}_{\bar\z} u_{,\g\bar\z} 
- \tgg^{\bar\eta}_{\bar\z\bar\z} \d^{\phi}_{\z} u_{,\z\bar\eta} + \tgg^{\g}_{\z\z}\tgg^{\bar\eta}_{\bar\z\bar\z} u_{,\g\bar\eta}) 
\nonumber\\
&+& \sum_{k>1}g^{\bar\z k} (\d_\z\d^{\phi}_{\bar\z}u_{,k\bar\z} - (\d_\z\tgg^{\bar\eta}_{\bar\z\bar\z})u_{,k\bar\eta} - \tgg^{\bar\eta}_{\bar\z\bar\z}\d_\z u_{,k\bar\eta}    
- \tg^{\g}_{\z k} \d^{\phi}_{\bar\z} u_{,\g\bar\z} + \tg^{\g}_{\z k}\tgg^{\bar\eta}_{\bar\z\bar\z} u_{,\g\bar\eta}   )
\nonumber\\
&+& \sum_{l>1} g^{\bar l\z} (\d^{\phi}_{\z}\d_{\bar\z}u_{,\z\bar l} - (\d_{\z}\tg^{\bar\eta}_{\bar\z\bar l})u_{,\z\bar\eta} - \tg^{\bar\eta}_{\bar\z\bar l}\d^{\phi}_{\z}u_{,\z\bar\eta} 
- \tgg^{\g}_{\z\z}\d_{\bar\z}u_{,\g\bar l} + \tgg^{\g}_{\z\z}\tg^{\bar\eta}_{\bar\z\bar l} u_{,\g\bar\eta}  )
\nonumber\\
&+ & \sum_{k,l>1}g^{\bar lk}(\d_{\z}\d_{\bar\z} u_{,k\bar l} - (\d_{\z}\tg^{\bar\eta}_{\bar\z\bar l})u_{,k\bar\eta} - \tg^{\bar\eta}_{\bar\z\bar l} \d_{\z}u_{,k\bar l}
- \tg^{\g}_{\z k}\d_{\bar\z}u_{,\g\bar l} + \tg^{\g}_{\z k}\tg^{\bar\eta}_{\bar\z\bar l} u_{,\g\bar\eta}).
\nonumber\\
\end{eqnarray}
Observe that all the third tensors appeared in equation (\ref{005}) are cone admissible by Lemma (\ref{lem-D005}) and Remark (\ref{rem-chirs}).
Moreover, since we have 
$$\d_{\z}\tgg^{\bar\z}_{\bar\z\bar\z} = \d_{\z}\tg^{\bar\z}_{\bar\z\bar\z},$$
all tensors like $\d_{\z}\tgg^{\bar\eta}_{\bar\z\bar\b}$ are cone 
admissible for any $\b$ thanks to Lemma (\ref{lem-normal}). 

Then the remaining fourth tensors in equation (\ref{005}) are the following 
\begin{equation}
\label{006}
|\z|^{2-2\b}g^{\bar\z\z}\d^{\phi}_{\z}\d^{\phi}_{\bar\z}u_{,\z\bar\z} = g^{\bar\z\z} \d_{\z}\d_{\bar\z}(|\z|^{2-2\b} u_{,\z\bar\z} ),
\end{equation}
\begin{equation}
|\z|^{2-2\b} g^{\bar\z k} \d_{\z}\d^{\phi}_{\bar\z} u_{,k\bar\z} = |\z|^{2-2\b}g^{\bar\z k}\d_k\d^{\phi}_{\bar\z}u_{,\z\bar\z}=g^{\bar\z k} \d_k\d_{\bar\z} (|\z|^{2-2\b} u_{,\z\bar\z}),
\end{equation}
\begin{equation}
|\z|^{2-2\b} g^{\bar l\z} \d^{\phi}_{\z}\d_{\bar\z} u_{,\z\bar l} = g^{\bar l\z} \d_{\bar l}\d_{\z} (|\z|^{2-2\b} u_{,\z\bar\z}),
\end{equation}
\begin{equation}
|\z|^{2-2\b} g^{\bar l k} \d_{\z}\d_{\bar\z} u_{,k\bar l} = g^{\bar l k} \d_k\d_{\bar l} u_{,\z\bar\z}.
\end{equation}
Hence we have
$$ \Delta_{\omega} ( |\z|^{2-2\b}u_{,\z\bar\z} ) = g^{\bar\b\a} \d_\a\d_{\bar\b}(|\z|^{2-2\b} u_{,\z\bar\z}) \in \cC^{,\a,\b}, $$
and the result follows.
\end{proof}

\begin{proof}[Proof of Proposition (\ref{prop-sch})]
Take Laplacian on $\Delta_{\Omega_0}u$ with respect to the $\cC^{4,\a,\b}$ conic cscK metric $\omega$, and we have 
$$\Delta_{\omega}(\Delta_{\Omega_0} u) = \Delta_{\omega}(\b^{-2} |\z|^{2-2\b}u_{,\z\bar\z}) + \sum_{j>1} \Delta_{\omega} u_{,j\bar j} \in\cC^{,\a,\b}, $$
thanks to Lemma (\ref{lem-D004}) and (\ref{lem-D006}). Then the result follows from Donaldson's estimate.
\end{proof}

Finally if we take $ \gamma = \tau - \dbar u$, then $\gamma$ is a $(0,1)$ form of class $\cC^{,\a,\b}$ too. 
Moreover, $\gamma$ is $\dbar$-harmonic outside the divisor, in the sense that 
$$\dbar\gamma =0; \ \ \ \vartheta\gamma =0, $$
on $X-D$. Therefore, we proved the following lemma. 
\begin{lemma}
\label{lem-hodge}
Suppose $\tau$ is the lifting of a holomorphic vector field $Y$ tangential to the divisor by the metric $\omega$. It can be decomposed into two parts
\begin{equation}
\label{hodge}
\tau = \gamma + \dbar u,
\end{equation}
where $u\in \cC^{4,\a,\b}$, $\gamma\in \cC^{,\a,\b}$ and $\gamma$ is $\dbar$-harmonic outside $D$. 
\end{lemma}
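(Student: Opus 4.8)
The plan is to realize the decomposition (\ref{hodge}) as a Hodge-type decomposition driven by a single scalar Poisson equation, rather than by any abstract Hodge theory, which is unavailable here since $\vartheta$ is not known to have closed range. First I would record the two elementary facts about $\tau = \downarrow^{\omega} Y$: that it lies in $\cC^{,\a,\b}$ as a $(0,1)$ form, which is immediate from $\rho^{1-\b}\tau_{\bar\z}\in\cC^{,\a,\b}_0$ and $\tau_{\bar l}\in\cC^{,\a,\b}$, and that $\dbar\tau = 0$ on $X-D$, which follows from a local computation using the holomorphicity of $Y$. These are already in hand from the discussion preceding the statement, so they serve as the starting data.

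The heart of the matter is to produce the potential $u$. The natural candidate solves the Poisson equation $\Delta_{\omega} u = \vartheta\tau$ of (\ref{lap1}), so the first substantive step is to verify that the right-hand side is admissible, i.e. $\vartheta\tau\in\cC^{,\a,\b}$. I would compute $\vartheta\tau$ term by term on $X-D$ via (\ref{oneform}), using the model growth of the Christoffel symbols together with the crucial observation that $Y^{\z} = \z f$ vanishes along the divisor, so that each potentially singular contribution gains the factor $\z$ needed to stay bounded. With $\vartheta\tau\in\cC^{,\a,\b}$ established, I would invoke the linear theory of Calamai--Zheng \cite{CZ} to obtain a weak solution $u\in W^{1,2}(\omega)$ of (\ref{lap1}), and then Donaldson's Schauder estimate to upgrade $u$ to $\cC^{2,\a,\b}$; here it is essential that $\Delta_{\omega}$ is a real operator, so that one may solve for a real $u$.

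The main obstacle is the regularity bootstrap from $\cC^{2,\a,\b}$ to $\cC^{4,\a,\b}$, i.e. Proposition (\ref{prop-reg}). The key idea is that the candidate $\gamma := \tau - \dbar u$ is automatically $\dbar$-harmonic outside $D$: indeed $\dbar\gamma = \dbar\tau = 0$ since $\dbar^2 = 0$, and $\vartheta\gamma = \vartheta\tau - \vartheta\dbar u = \vartheta\tau - \Delta_{\omega} u = 0$ by the very choice of $u$. Harmonicity of $\gamma$ forces $u$ into the kernel of the Lichnerowicz operator $\cD$ on $X-D$; feeding this into the Bochner-type identity for $\cD$ yields $\Delta^2_{\omega} u = -R^{\bar\b\a}u_{,\a\bar\b}$, whose right-hand side is of class $\cC^{,\a,\b}$ by the curvature estimates of Corollary (\ref{cor-ric}). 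Applying Donaldson's estimate once more, again using that $\cD$ and $\Delta_{\omega}$ are real, gives $\Delta_{\omega} u\in\cC^{2,\a,\b}$, and the rough Schauder estimate of Proposition (\ref{prop-sch}) for the fourth-order operator then lifts $u$ all the way to $\cC^{4,\a,\b}$.

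Finally I would assemble the pieces: with $u\in\cC^{4,\a,\b}$ in hand, $\gamma = \tau - \dbar u$ is of class $\cC^{,\a,\b}$ and, by the computation above, is $\dbar$-harmonic on $X-D$, which is exactly (\ref{hodge}). I expect the genuinely delicate point to be the fourth-order step: controlling the fourth derivatives of $u$ near the cone, where the singular normal direction forces one to track the weighted tensor $|\z|^{2-2\b}u_{,\z\bar\z}$ together with its cone-admissible curvature corrections, as in Lemmas (\ref{lem-D004})--(\ref{lem-D006}); the second-order solvability, by contrast, is comparatively soft once $\vartheta\tau\in\cC^{,\a,\b}$ is known.
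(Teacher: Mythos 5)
Your proposal is correct and follows essentially the same route as the paper: establish $\vartheta\tau\in\cC^{,\a,\b}$ from the model growth of the Christoffel symbols and the vanishing $Y^{\z}=\z f$, solve $\Delta_{\omega}u=\vartheta\tau$ via the linear theory of \cite{CZ} plus Donaldson's estimate, and then bootstrap to $\cC^{4,\a,\b}$ by observing that harmonicity of $\gamma=\tau-\dbar u$ places $u$ in $\ker\cD$ on $X-D$, so that $\Delta_{\omega}^2u=-R^{\bar\b\a}u_{,\a\bar\b}\in\cC^{,\a,\b}$ and Proposition (\ref{prop-sch}) applies. This matches the paper's argument, including the identification of the rough Schauder estimate as the genuinely delicate step.
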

\subsection{Lichnerowicz operator}
\label{sub-2}
For a fixed smooth K\"ahler metric $\omega_0$, we denote by $L_{\omega_0}: C^{\infty}_{\mathbb{C}}(X)\rightarrow C^{\infty}(T^{1,0}X\otimes T^{*0,1}(X))$ the second order differential operator (not real!) 
$$L_{\omega_0} u = \nabla_{\bar{\a}}\nabla^{\mu} u \frac{\d}{\d z^{\mu}} \otimes d\bar{z}^{\a}.$$
There is another 4th. order elliptic differential operator $D_{\omega_0}: C^{\infty}_{\mathbb{C}}(X)\rightarrow C^{\infty}_{\mathbb{C}}(X)$ defined as 
$$D_{\omega_0} u = \Delta_{\omega_0}^2 u +  \nabla_{\mu}(R^{\bar{\a}\mu}\nabla_{\bar{\a}} u).$$
It is standard to see that $$D_{\omega_0} = L^*_{\omega_0}L_{\omega_0},$$ where $L^*_{\omega_0}$ is the formal adjoint of $L_{\omega_0}$ with respect to the $L^2$-inner product.

Now we want to consider these operators in the conic setting. Recall that $\omega$ is a $\cC^{2,\a,\b}$ conic cscK metrics. 
Then it is indeed of class $\cC^{4,\a,\b}$ according to Theorem (\ref{thm-reg}).
We define $$\cD: \cC^{4,\a,\b} \rightarrow \cC^{,\a,\b}$$ as a 4th. order differential operator 
$$ \cD u =  \Delta_{\omega}^2 u +  R^{\bar{\nu}\mu}\nabla_{\mu}\nabla_{\bar{\nu}} u,$$
where the covariant derivative $\nabla$ and curvature $R$ are all respect to the conic cscK metric $\omega$. 
\begin{lemma}
\label{lem-lich}
$\cD$ is a well defined, real operator. 
\end{lemma}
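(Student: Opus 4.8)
The plan is to verify the two assertions of the lemma separately: first that $\cD$ maps into the stated target, i.e.\ $\cD u\in\cC^{,\a,\b}$ for every $u\in\cC^{4,\a,\b}$, and second that $\cD$ commutes with complex conjugation, which is what ``real'' means here. Throughout I would work pointwise on $X-D$, where $\omega$ is smooth and all the covariant expressions literally make sense, and only at the end read off the growth near $D$ to identify the $\cC^{,\a,\b}$ representative.

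For realness it suffices to check $\overline{\cD u}=\cD\bar u$. The bi-Laplacian term is immediate from the fact, already recorded in the text, that $\Delta_\omega$ is a real operator, so $\Delta_\omega^2$ is too. For the curvature term I would invoke two standard facts for a K\"ahler metric: the Ricci tensor is Hermitian symmetric, so $\overline{R^{\bar\nu\mu}}=R^{\bar\mu\nu}$, and the mixed second covariant derivatives of a scalar commute, $\nabla_{\bar\mu}\nabla_\nu\bar u=\nabla_\nu\nabla_{\bar\mu}\bar u$, since no curvature correction appears on a function. Conjugating $R^{\bar\nu\mu}\nabla_\mu\nabla_{\bar\nu}u$ using $\overline{\nabla_\mu\nabla_{\bar\nu}u}=\nabla_{\bar\mu}\nabla_\nu\bar u$ and then relabelling the summation indices $\mu\leftrightarrow\nu$ returns exactly $R^{\bar\nu\mu}\nabla_\mu\nabla_{\bar\nu}\bar u$, giving $\overline{\cD u}=\cD\bar u$. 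This half is short and I do not expect difficulty.

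For well-definedness I would treat the two summands of $\cD u$ in turn, using that $\cC^{,\a,\b}$ is stable under products of suitably weighted factors. For the curvature term $R^{\bar\nu\mu}\nabla_\mu\nabla_{\bar\nu}u=R^{\bar\nu\mu}u_{,\mu\bar\nu}$ I would contract the Ricci growth of Corollary \ref{cor-ric} against the Hessian growth of $u$ supplied by Lemma \ref{lem-3partial} and Proposition \ref{prop-conn}, checking that each of the index types $(k\bar l),(\z\bar l),(k\bar\z),(\z\bar\z)$ produces a factor whose weights cancel and land the product in $\cC^{,\a,\b}$. For the bi-Laplacian I would expand $\Delta_\omega^2u=\Delta_\omega(\Delta_\omega u)=g^{\bar\b\a}\nabla^\phi_\a\nabla^\phi_{\bar\b}(\cdots)$ exactly as in the computation (\ref{005}) of Lemma \ref{lem-D006}, but run forwards: the fourth covariant derivatives of $u$ are controlled by Lemma \ref{lem-4th} and Proposition \ref{prop-conn}, the Christoffel symbols by Proposition \ref{prop-chris} together with Remark \ref{rem-chirs}, the curvature tensors arising from differentiating them by Lemmas \ref{lem-normal} and \ref{lem-tangent}, and the inverse-metric weights by Lemma \ref{lem-vol2}. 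Assembling these with the $\nabla^\phi$ bookkeeping shows each resulting term is cone admissible of the correct weight, hence $\Delta_\omega^2u\in\cC^{,\a,\b}$.

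The \textbf{main obstacle} is the off-diagonal and pure-normal contractions near $D$. Terms such as $g^{\bar l\z}u_{,\z\bar l}$, and the normal fourth-order contributions carrying two factors of $g^{\bar\z\z}$, are only borderline: a crude bound makes them blow up like a negative power of $\rho$, and they are bounded (indeed vanishing) only because of the refined small-angle growth, namely the improved $\rho^{1-2\b}$ estimate recorded after the definition of $\cC^{4,\a,\b}$ together with its second-order analogues in Lemma \ref{lem-3partial}. Matching these decay rates against the $\rho^{1-\b}$ and $\rho^{2-2\b}$ weights of the off-diagonal and normal inverse-metric entries from Lemma \ref{lem-vol2} is the delicate bookkeeping that the entire preceding section was built to supply, and I expect this term-matching near the divisor, rather than the realness, to be the substantive part of the argument.
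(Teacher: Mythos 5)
Your proposal is correct in substance and identifies the right crux (the mixed contractions near $D$), but it organizes the bi-Laplacian differently from the paper. The paper does not expand $\Delta_{\omega}^2 u$ into fourth covariant derivatives of $u$; instead it first proves the intermediate claim $\Delta_{\omega}u\in\cC^{2,\a,\b}$ via the expansion (\ref{lap001}), and then applies $\Delta_{\omega}$ once more through (\ref{lap}). The only delicate point in that route is exactly the one you flag, the terms $g^{\bar l\z}u_{,\z\bar l}$ and $g^{\bar\z k}u_{,k\bar\z}$, but the paper needs them in $\cC^{2,\a,\b}$ (not merely $\cC^{,\a,\b}$), and for that the stability of $\cC^{,\a,\b}$ under weighted products is not enough: the paper invokes the cofactor expansion $g^{\bar l\z}=|\z|^{2-2\b}\sum_j g_{j\bar\z}\hat G^j/(|\z|^{2-2\b}\det g)$ together with the product Lemma \ref{lem-vol}, which establishes $|\z|^{2-2\b}(\d_\z\d_{\bar l}u)(\d_k\d_{\bar\z}v)\in\cC^{2,\a,\b}$ by computing its model Laplacian, a statement your write-up does not cite and which is the real workhorse here. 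Your alternative, a direct forward expansion of $\Delta_\omega^2 u$ modelled on (\ref{005}), is viable and is essentially what the paper itself carries out in (\ref{new-005}) to prove Proposition \ref{prop-glo}; it only requires landing in $\cC^{,\a,\b}$ at the end, so the weighted-product algebra suffices, at the cost of tracking many more fourth-order and curvature terms. Two small cautions: the improved $\rho^{1-2\b}$ first-order estimate you invoke is not what saves the mixed second-order terms (the relevant inputs are the $\cC^{,\a,\b}_0$ vanishing of $\rho^{1-\b}u_{,\z\bar l}$ and the cofactor structure of $g^{\bar l\z}$), and where Lemma \ref{lem-3rd-1} only gives an $O(\rho^{\a\b})$ bound rather than H\"older membership you should route through Proposition \ref{prop-conn}, which upgrades it. Your realness argument is fine and is more detailed than the paper's, which simply observes that $\cD$ is real from its definition since $\Delta_\omega$ and the contraction of the Hermitian Ricci tensor against $i\ddbar u$ are real operations.
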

\begin{proof}
We first claim that $\Delta_{\omega}u \in \cC^{2,\a,\b}$. Then $\Delta_{\omega}^2$ is an operator from $\cC^{4,\a,\b}$ to $\cC^{,\a,\b}$ by direct computation 
\begin{equation}
\label{lap}
\Delta_{\omega}^2 u = g^{\bar{\z}\z} (\Delta_{\omega}u)_{,\z\bar{\z}} + \sum_{k,l>1}  g^{\bar{\z}k} (\Delta_{\omega}u)_{,k\bar{\z}} + g^{\bar{l}\z}(\Delta_{\omega} u)_{,\z\bar{l}}
+ g^{\bar{l}k}(\Delta_{\omega} u)_{,k\bar{l}}.
\end{equation}
To prove the claim, we can compute as follows
\begin{eqnarray}
\label{lap001}
\Delta_{\omega}u &=& g^{\z\bar{\z}} u_{,\z\bar{\z}} + \sum_{k,l>1} g^{\bar{\z}k}u_{,k\bar{\z}} + g^{\z\bar{l}} u_{,\z\bar{l}} + g^{\bar{l}k} u_{,k\bar{l}}
\nonumber\\
&=& (\rho^{2\b-2}g^{\z\bar{\z}}) (\rho^{2-2\b} u_{,\z\bar{\z}}) + \sum_{k>1}  g^{\bar{\z}k} u_{,k\bar{\z}}
\nonumber\\
&+& \sum_{l>1}  g^{\z\bar{l}}  u_{,\z\bar{l}}  + \sum_{k,l>1}  g^{\bar{l}k} u_{,k\bar{l}}.
\end{eqnarray}
Notice that we have  
$$ \rho^{2\b-2}g^{\z\bar{\z}},\  g^{\bar lk} \in\cC^{2,\a,\b},$$
by Lemma (\ref{lem-vol2}). Hence it is enough to prove that the mixed terms have the correct growth rate
\begin{equation}
\label{555}
g^{\bar{l}\z}u_{,\z\bar l},\ g^{\bar\z k}u_{,k\bar\z}\in \cC^{2,\a,\b}.
\end{equation}
By the expansion formula of the inverse matrix of the metric $\omega$, we have 
$$ g^{\bar l\z} = \frac{|\z|^{2-2\b}\det G_{\z\bar l}}{|\z|^{2-2\b}\det g}. $$
The denominator is in $\cC^{2,\a,\b}$, and the numerator can be written as a linear combination
$$ \det G_{\z\bar l} = \sum_{j>1} g_{j\bar\z} \hat{G}^j, $$ where $\hat{G}^j$ is some $\cC^{2,\a,\b}$ function for each $j>1$.
Therefore,
$$ g^{\bar l\z} u_{,\z\bar l} = \frac{\sum_{j>1} ( |\z|^{2-2\b} g_{j\bar\z} u_{,\z\bar l}) \hat{G}^j}{|\z|^{2-2\b}\det g}$$
is of class $\cC^{2,\a,\b}$ thanks to Lemma (\ref{lem-vol}).

According to Corollary (\ref{cor-ric}), the second term  $R^{\bar{\nu}\mu}\nabla_{\mu}\nabla_{\bar{\nu}} u$
in the definition of the operator $\cD$ is of class $\cC^{,\a,\b}$.
Therefore, the operator $\cD$ is well defined, and it is real simply from its definition. 
\end{proof}

\begin{lemma}
\label{lem-L2}
The operator $L_g|_{X-D}$ is $L^2$ acting on $\cC^{4,\a,\b}$ functions. Moreover, it satisfies the following integration by parts formula
\begin{equation}
\label{L2}
\int_{X-D} ( \cD u,  v) \omega^n = \int_{X-D} \langle L_g u, L_g v\rangle_g \omega^n,
\end{equation}
for any $u,v\in \cC^{4,\a,\b}$. 
\end{lemma}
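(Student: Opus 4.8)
The plan is to prove the two assertions separately, exhausting $X-D$ by the domains $X-D_\epsilon$ with $D_\epsilon:=\{\rho<\epsilon\}$ and controlling the boundary flux on $\{\rho=\epsilon\}$ as $\epsilon\to0$. For the $L^2$ claim I would first write $|L_g u|^2_g$ in the $z$-coordinate. Since the metric is Chern--K\"ahler, $\nabla_{\bar\a}\nabla^\mu u = g^{\mu\bar\lambda}\nabla_{\bar\a}\nabla_{\bar\lambda}u$, so the pointwise norm is the contraction of the $(0,2)$ and $(2,0)$ covariant Hessians,
$$ |L_g u|^2_g = g^{\bar\a\beta}g^{\kappa\bar\lambda}(\nabla_{\bar\a}\nabla_{\bar\lambda}u)(\nabla_\beta\nabla_\kappa u). $$
The only dangerous entry is the purely normal one $\nabla_\z\nabla_\z u$; rewriting it as $\d^{\phi}_\z u_\z-\tgg^{\gamma}_{\z\z}u_\gamma$ via (\ref{1111}) and using the small angle estimate $\rho^{1-2\b}(\d u)_\z\in\cC^{,\a,\b}$ together with the model growth of $\tgg^{\gamma}_{\z\z}$ yields $\nabla_\z\nabla_\z u=O(\rho^{2\b-2})$, while the mixed and tangential entries are of lower order. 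Feeding these into the contraction and invoking the inverse-metric growth from Lemmas (\ref{lem-vol2}) and (\ref{lem-lich}) (so that $g^{\bar\z\z}\sim\rho^{2-2\b}$ and $g^{\bar l\z}\sim\rho^{1-\b}$), the function $|L_g u|^2_g$ stays bounded near $D$, hence is integrable against $\omega^n$.

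For the identity, on the open manifold $X-D$ the metric is smooth and non-degenerate, so the standard K\"ahler computation $\cD=L_g^*L_g$ holds pointwise as a divergence identity
$$ (\cD u)\,\bar v-\langle L_g u,L_g v\rangle_g=\nabla_\beta W^\beta+\nabla_{\bar\a}V^{\bar\a}, $$
for explicit fields $W,V$ built from $u,v$ and their covariant derivatives. Integrating over $X-D_\epsilon$ and applying Stokes produces two flux terms on $\{\rho=\epsilon\}$: one pairing the second covariant derivatives of $u$ with the first derivatives of $v$ (from the first integration by parts), and one pairing the third covariant derivatives of $u$ with $v$ itself (from the second).

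The main obstacle is to show these fluxes vanish as $\epsilon\to0$. Restricting $\iota_W\omega^n$ to $\{\rho=\epsilon\}$ kills every term except the one carrying the normal component $W^\rho$, with coordinate density $\sim\rho^{2\b-1}$; thus it suffices to check $W^\rho=o(\rho^{1-2\b})$. For the first flux the leading piece is $g^{\bar\z\z}(\Delta_\omega u)_{\bar\z}\bar v$ together with its $g^{\bar l\z}$-analogue, and the small angle estimate $(\Delta_\omega u)_{\bar\z}=O(\rho^{2\b-1})$ (valid since $\Delta_\omega u\in\cC^{2,\a,\b}$ by Lemma (\ref{lem-lich})) combined with $g^{\bar l\z}\sim\rho^{1-\b}$ gives $W^\rho=O(\rho^{1-\b})$, hence a flux of size $O(\epsilon^{\b})$. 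For the second flux the worst entries are the normal third derivatives controlled by Proposition (\ref{prop-conn}) and the lemma following it ($\rho^{3-3\b}\nabla_\z\nabla_{\bar\z}\nabla_\z u\in\cC^{,\a,\b}_0$); commuting covariant derivatives to replace $g^{\bar\a\beta}\nabla_\beta\nabla_{\bar\a}\nabla^\z u$ by $\nabla^\z\Delta_\omega u$ (the Ricci remainder being absorbed into the bulk term $R^{\bar\nu\mu}\nabla_\mu\nabla_{\bar\nu}u$ of $\cD$) again yields $W^\rho=O(\rho^{1-\b})$ and a flux of size $O(\epsilon^{\b})$. The delicate bookkeeping of these exponents, and the verification that each is made positive precisely by the small angle condition, is where the real work lies.

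Finally, since the bulk integrands $(\cD u)\bar v$ and $\langle L_g u,L_g v\rangle_g$ are dominated by integrable functions (the former because $\cD u\in\cC^{,\a,\b}$ and $v$ is bounded, the latter by the first paragraph), dominated convergence permits passing to the limit $\epsilon\to0$, and the vanishing of the boundary fluxes then gives the stated identity over $X-D$.
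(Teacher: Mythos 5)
Your proof is correct in substance, but it regularizes the integration by parts differently from the paper. Both arguments share the same skeleton: split $\cD$ into $\Delta_\omega^2$ plus the Ricci term, use the pointwise divergence identity on $X-D$, and then kill the error coming from the singularity using the growth estimates already established (model growth in the 3rd order, Proposition (\ref{prop-conn}), Lemma (\ref{lem-vol2}), and the small-angle improvement $\rho^{1-2\b}(\d u)_{\z}\in\cC^{,\a,\b}$). Where you differ is the regularization device: you exhaust $X-D$ by $\{\rho>\ep\}$ and bound the two Stokes fluxes on $\{\rho=\ep\}$ by showing the normal component of each flux field is $o(\rho^{1-2\b})$ against the boundary density $\rho^{2\b-1}$, whereas the paper inserts a logarithmic cut-off $\chi_\ep$ with $\d\chi_\ep=\frac{\ep\eta}{\z\log\rho}(d\z-\z\,\d\psi)$, whose gradient carries an explicit factor of $\ep$, and then verifies term by term that each contraction of third covariant derivatives with $\d\chi_\ep$ is $\ep\, o(1)$ or $\ep\, o(\rho^{-\b})$ and hence integrable. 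The two conditions are equivalent (a flux field of size $o(\rho^{1-2\b})$ is exactly what makes $W^{\z}\rho^{-1}$ integrable against $\rho^{2\b-1}d\rho$), so the same lemmas feed both; your version buys a cleaner single exponent count at the cost of having to identify the induced boundary measure, while the paper's cut-off avoids any boundary geometry but forces the long list of termwise estimates. Two further remarks: you make the $L^2$ statement for $L_g$ explicit via the bound $\nabla_{\z}\nabla_{\z}u=O(\rho^{2\b-2})$ against $g^{\bar\z\z}=O(\rho^{2-2\b})$, which the paper leaves implicit in its growth table, and your labelling of which flux pairs which order of derivative of $u$ against $v$ is swapped relative to the order in which the two integrations by parts are actually performed; this is harmless bookkeeping since both fluxes are estimated.
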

\begin{proof}
It is enough to assume $u,v$ are real valued functions. The standard computation \cite{Fut} implied the following equation on $X-D$
$$ \cD u = \sum_{\a,\b,\mu,\nu =1}^n g^{\bar{\b}\a}g^{\bar{\nu}\mu} \nabla_{\a}\nabla_{\mu}\nabla_{\bar{\b}}\nabla_{\bar{\nu}}u. $$
However, we can not use this formula to do integration by parts (IbP), since some 3rd. order derivatives of the metric $g$ are involved in the RHS of above equation. 

Now consider the Laplacian square term first, and we claim the following 
\begin{eqnarray}
\label{L22}
\int_{X-D} (\Delta_{\omega}^2 u, v) \det g &= & \int_{X-D} g^{\bar{\b}\a}g^{\bar{\nu}\mu} (\nabla_{\a}\nabla_{\bar{\b}}\nabla_{\mu}\nabla_{\bar{\nu}}u)\bar{v}\det g
\nonumber\\
&= & - \int_{X-D} g^{\bar{\b}\a}g^{\bar{\nu}\mu} (\nabla_{\bar{\b}}\nabla_{\mu}\nabla_{\bar{\nu}}u) \nabla_{\a} \bar{v} \det g
\nonumber\\
& = &  - \int_{X-D} g^{\bar{\b}\a}g^{\bar{\nu}\mu} (\nabla_{\mu}\nabla_{\bar{\b}}\nabla_{\bar{\nu}}u - 
R^{\bar{\gamma}}_{\bar{\nu}\mu\bar{\b}} \nabla_{\bar{\gamma}} u ) \nabla_{\a} \bar{v} \det g.
\end{eqnarray}
Here we have to explain the IbP from the first line to the second line. First notice that each integrand involved in equation (\ref{L22}) is $L^1(\omega^n)$.  
Moreover, take $\chi_{\ep}$ to be a smooth cut off function \cite{LZ} supported outside the divisor such that 
$$ \d \chi_{\ep} = \frac{\ep \eta}{\z\log\rho} (d\z - \z \d\psi ), $$
where $\eta$ is a smooth function supported in an annual region outside $D$, and $\psi$ is another smooth function. 
We obtained the following IbP formula on $X$
\begin{eqnarray}
\label{IbP}
\int_{X} g^{\bar{\b}\a}g^{\bar{\nu}\mu} (\nabla_{\a}\nabla_{\bar{\b}}\nabla_{\mu}\nabla_{\bar{\nu}}u)\chi_{\ep}\bar{v}\det g &=&
  \int_{X} g^{\bar{\b}\a}g^{\bar{\nu}\mu} (\nabla_{\bar{\b}}\nabla_{\mu}\nabla_{\bar{\nu}}u) \nabla_{\a} (\chi_{\ep}\bar{v}) \det g
\nonumber\\
& = & \int_{X} g^{\bar{\b}\a}g^{\bar{\nu}\mu} (\nabla_{\bar{\b}}\nabla_{\mu}\nabla_{\bar{\nu}}u) \nabla_{\a} \chi_{\ep} \bar{v} \det g
\nonumber\\
&+& \int_{X} g^{\bar{\b}\a}g^{\bar{\nu}\mu} (\nabla_{\bar{\b}}\nabla_{\mu}\nabla_{\bar{\nu}}u) \nabla_{\a}\bar{v} \chi_{\ep} \det g
\end{eqnarray}
It is enough to prove that all the terms involving $\nabla\chi_{\ep}$ converges to zero, when $\ep$ does.
Then the claim follows from the following estimates
$$ g^{\bar{l} k}g^{\bar{j}i} u_{,\bar l i \bar j} \d_{k} \chi_{\ep} = \ep o(1), \  \ g^{\bar{\z} k}g^{\bar{j}i} u_{,\bar\z i \bar j} \d_{k} \chi_{\ep}= \ep o(1),\ \ 
 g^{\bar{l} k}g^{\bar{j}\z} u_{,\bar l \z \bar j} \d_{k} \chi_{\ep}= \ep o(1), $$
$$ g^{\bar{l} k}g^{\bar{j}i} u_{,\bar l i \bar j} \d_{k} \chi_{\ep} = \ep o(1),\ \ 
 g^{\bar{l} \z}g^{\bar{j}i} u_{,\bar l i \bar j} \d_{\z} \chi_{\ep}  = \ep o(\rho^{-\b}),\ \  g^{\bar{\z} k}g^{\bar{\z}i} u_{,\bar \z i \bar \z} \d_{k} \chi_{\ep} = \ep o(1),$$
$$ g^{\bar{\z} k}g^{\bar{j}\z} u_{,\bar \z \z \bar j} \d_{k} \chi_{\ep}= \ep o(1), \ \ 
 g^{\bar{l} k}g^{\bar{\z}\z} u_{,\bar l \z \bar \z} \d_{k} \chi_{\ep}= \ep o(1),\ \ 
  g^{\bar{\z} \z}g^{\bar{j}i} u_{,\bar \z i \bar j} \d_{\z} \chi_{\ep} = \ep o(\rho^{-\b}),$$
$$  g^{\bar{l} \z}g^{\bar{\z}i} u_{,\bar l i \bar \z} \d_{\z} \chi_{\ep} = \ep o(\rho^{-\b}),\ \ 
 g^{\bar{l} \z}g^{\bar{j}\z} u_{,\bar l \z \bar j} \d_{\z} \chi_{\ep}= \ep o(\rho^{-\b}),\ \ g^{\bar{\z}\z}g^{\bar{\z}\z} u_{,\bar\z\z\bar\z} \d_{\z} \chi_{\ep}  = \ep o(\rho^{-\b}).$$
\\

On the other hand, we have
\begin{eqnarray}
\label{L23}
\int_{X-D} Ric (\ddbar u, v) \det g &=& \int_{X-D} \nabla_{\mu}(R^{\bar{\a}\mu}\nabla_{\bar{\a}} u) \bar v \det g
\nonumber\\
&=& - \int_{X-D}  (R^{\bar{\a}\mu}\nabla_{\bar{\a}} u) \nabla_{\mu} \bar v \det g
\end{eqnarray}
Here we used the fact that $\nabla R =0$ on $X-D$, and the growth estimates 
$$ R^{\bar\z\z}\d_{\bar\z}u\d_{\z}\chi_{\ep} = \ep  o(\rho^{-\b}),\ \ \ R^{\bar l \z} \d_{\bar l} u \d_{\z}\chi_{\ep} = \ep o(\rho^{-\b}),  $$
$$ R^{\bar\z k} \d_{\bar\z}u \d_{k}\chi_{\ep} = \ep o(1),\ \ \  R^{\bar l k} \d_{\bar l} u \d_{k} \chi_{\ep} = \ep o(1). $$
Then equation (\ref{L23}) follows. Moreover, equation (\ref{L22}) implies that 
\begin{eqnarray} 
\label{L24}
\int_{X-D} (\cD u, v) \omega^n &=& -\int_{X-D} g^{\bar\b \a}g^{\bar\nu \mu} u_{,\bar\nu\bar\b \mu } v_{,\a}\det g
\nonumber\\
&=& \int_{X-D} g^{\bar\b \a}g^{\bar\nu \mu} u_{,\bar\nu\bar\b  } \bar{v}_{,\a\mu}\det g
\end{eqnarray}
The last equation follows from some similar growth estimates involved in equation (\ref{IbP}). 
\end{proof}

Next we are going to establish a one-one correspondence between the kernel of $\cD$ and  all holomorphic vector fields on $X$ tangential to the divisor. 
\begin{lemma}
\label{lem-vector}
Suppose $u\in \cC^{4,\a,\b}$ is in the kernel of $\cD$. Then the lifting of $u$ by the metric $\omega$ is a holomorphic vector field on $X$ tangential to $D$.
\end{lemma}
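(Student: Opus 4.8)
The plan is to show that the $(1,0)$-gradient $Y := grad_g\, u$, with components $Y^\mu = g^{\bar\nu\mu} u_{,\bar\nu}$, is holomorphic away from $D$, extends holomorphically across $D$, and has a normal component vanishing along $D$. First I would feed $u$ into the integration-by-parts identity of Lemma \ref{lem-L2}, taking $v=u$. Since $u$ lies in the kernel of $\cD$, the left-hand side vanishes, so
\begin{equation*}
0 = \int_{X-D}(\cD u, u)\,\omega^n = \int_{X-D}\langle L_g u, L_g u\rangle_g\,\omega^n = \int_{X-D}|L_g u|_g^2\,\omega^n.
\end{equation*}
As the integrand is non-negative and $u$ is smooth on $X-D$, this forces $L_g u\equiv 0$ there, i.e. $\nabla_{\bar\a}\nabla^{\mu} u = 0$ for all $\a,\mu$. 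Because the $(0,1)$-part of the Chern connection on $T^{1,0}X$ is just $\dbar$, this says $\d_{\bar\a}Y^{\mu}=0$ on $X-D$: the vector field $Y$ is holomorphic outside the divisor.

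Next I would extend $Y$ across $D$ by Riemann's removable-singularity theorem, for which it suffices to bound each component $Y^\mu$ locally near $D$. Here the sharp growth estimates assembled earlier do the work. From Lemma \ref{lem-vol2} one has $\rho^{2\b-2}g^{\bar\z\z},\, g^{\bar lk}\in\cC^{2,\a,\b}$, and the same expansion of the inverse metric used there gives $g^{\bar l\z},\, g^{\bar\z k}=O(\rho^{1-\b})$; on the other hand the small-angle Remark yields $\rho^{1-2\b}u_{,\z}\in\cC^{,\a,\b}$, hence $u_{,\bar\z}=O(\rho^{2\b-1})$, while $u_{,\bar l}=O(1)$. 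Combining these,
\begin{equation*}
Y^{\z} = g^{\bar\z\z}u_{,\bar\z} + \sum_{l>1}g^{\bar l\z}u_{,\bar l} = O(\rho) + O(\rho^{1-\b}) = O(\rho^{1-\b}),
\end{equation*}
and likewise $Y^{k}=g^{\bar\z k}u_{,\bar\z}+\sum_{l>1}g^{\bar lk}u_{,\bar l}$ is bounded near $D$. Each $Y^\mu$ is thus a bounded holomorphic function on $U\setminus D$ for a coordinate polydisc $U$, so it extends holomorphically across the smooth hypersurface $D=\{\z=0\}$; the extended $Y=\sum_{\mu}Y^{\mu}\,\d/\d z^{\mu}$ is a holomorphic vector field on all of $X$.

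Finally, the estimate $Y^{\z}=O(\rho^{1-\b})\to 0$ shows that the extended component $Y^{\z}$ vanishes identically on $D$, so $Y^{\z}=\z\cdot h$ for a holomorphic $h$; equivalently, $Y$ is tangent to $D$, and therefore $Y\in\mathfrak{h}(X;D)$. The main obstacle is not the formal computation but the passage from $X-D$ to $X$: one must be sure that the $L^2$-identity of Lemma \ref{lem-L2} legitimately applies (its proof already handles the boundary terms through the cut-off $\chi_{\ep}$), and that the growth rates of the inverse metric and of $u_{,\bar\nu}$ are exactly strong enough to make every $Y^{\mu}$ bounded --- and $Y^{\z}$ vanishing --- near the divisor. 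This is precisely where the $\cC^{4,\a,\b}$ regularity of $u$ and the small-angle condition are indispensable.
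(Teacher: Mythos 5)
Your proposal is correct and follows essentially the same route as the paper: both obtain $L_g u=0$ on $X-D$ from the integration-by-parts formula of Lemma \ref{lem-L2}, then extend the gradient vector field holomorphically across $D$ using its boundedness near the divisor, and read off tangency from the vanishing of the normal component $Y^{\z}$. The only point the paper makes explicit that you leave implicit is the coordinate-independence of the extension (resolved there by noting $\tilde Z|_D=\uparrow^{\omega|_D}\dbar(u|_D)$), but this is automatic from the uniqueness of holomorphic extension across the divisor.
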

\begin{proof}
Let $Z$ be the lifting of $\dbar u$ under the metric $\omega$. That is to say 
$$Z = \uparrow^{\omega}\dbar u,$$
and 
$$Z^{\z} = g^{\bar\z\z}\d_{\bar\z}u + \sum_{l>1} g^{\bar l \z}\d_{\bar l} u, $$
$$Z^{k} = g^{\bar\z k} \d_{\bar\z}u + \sum_{l>1} g^{\bar l k} \d_{\bar l}u. $$
Notice that $Z^{\z}$ converges to zero and $Z^{k}$ converges to its smooth part $\sum_{l>1}g^{\bar l k} \d_{\bar l}u$, when the point approaches the divisor.
Moreover, the vector field $Z$ is holomorphic on $X-D$, since $L_g u =0$ outside the divisor thanks to Lemma (\ref{L2}). 
Therefore, near each point $p\in D$, we can extend the coefficients $Z^{\z}, Z^{k}$ across the divisor as holomorphic functions. 
Let $\tilde{Z}$ be the holomorphic extension of $Z$. In prior, the value of $\tilde{Z}$ depends on the local coordinate. 
However, the normal direction of $\tilde{Z}$ always vanishes, and we have 
$$ \tilde{Z}|_D = \sum_{k>1} (\sum_{l>1} g^{\bar l k} \d_{\bar l}u) \frac{\d}{\d z_k} = \uparrow^{\omega|_D} \dbar ( u|_D),$$
which implies our extension does not depend on the local coordinates. 
\end{proof}

\begin{proof}[Proof of Theorem \ref{thm-reductive}]
Let $Y$ be an element of $\mathfrak{h}(X;D)$. 
According to the hodge decomposition (Lemma \ref{lem-hodge}), the induced $(0,1)$ form $\tau$ of $Y$ can be written as 
$$\tau = \gamma + \dbar u, $$ where $\gamma$ is $\dbar$-harmonic ($\dbar \gamma =0$, $\vartheta\gamma =0$) outside of $D$. Then we have on $X-D$
\begin{eqnarray}
\label{new4-001}
\cD u &=& \nabla_{\a}\nabla_{\b} \nabla^{\b}\nabla^{\a} u = \nabla_{\a}\nabla_{\b} \nabla^{\b} ( Y^{\a} - \tilde{\gamma}^{\a}) 
\nonumber\\
&=& - \nabla_{\a}( \nabla^{\b}\nabla_{\b} \tilde{\gamma}^{\a} - R_{\b\ \ \mu}^{\ \b\a} \tilde{\gamma}^{\mu}) = -(\nabla^{\bar\mu}R) \gamma_{\bar\mu} 
\nonumber\\
&=& 0,
\end{eqnarray}
where $\tilde{\gamma}$ is the lifting of $\gamma$ by $\omega$. Notice that $\cD u$ is of class $\cC^{,\a,\b}$, and then the function $u\in \cC^{4,\a,\b}$ is in the kernel of $\cD$. 
Furthermore, we have $$L_g u =0,$$  thanks to Lemma (\ref{lem-L2}). 
And Lemma (\ref{lem-vector}) implies that the lifting $\uparrow^{\omega}\dbar u $ is an element of $\mathfrak{h}'(X;D)$.
Therefore, there exists a one-one correspondence between $\mathfrak{h}'(X;D)$ and the kernel of $\cD$ under the action $grad_g$,
and the reductivity follows since $\cD$ is a real operator. 

On the other hand, $\tilde{\gamma} = Y -  \uparrow^{\omega}\dbar u$ is in the space $\mathfrak{h}(X;D)$. 
Hence we have 
$$\nabla_{\bar\nu} \tilde{\gamma}^{\a} = \d_{\bar\nu} \tilde{\gamma}^{\a} = 0,$$ on $X$. 
Moreover, we have 
$$\nabla_{\mu}\tilde\gamma^{\a} = (\uparrow \d \gamma)^{\a}_{\mu} =0,$$ on $X-D$, 
since $\gamma$ is a conjugate holomorphic $(0,1)$ form on $X-D$. Therefore, $\tilde\gamma$ is in the space $\mathfrak{a}(X;D)$, and we have the decomposition 
$$Y = \tilde\gamma + \uparrow^{\omega} \dbar u. $$
Moreover, we claim the following communication relations
\begin{equation}
\label{comm1}
[\mathfrak{a}(X;D), \mathfrak{a}(X;D)] = \{ 0\},
\end{equation}
\begin{equation}
\label{comm2}
[\mathfrak{h}(X;D), \mathfrak{h}(X;D)] \subset \mathfrak{h}'(X;D).
\end{equation}
Equation (\ref{comm1}) is easy. For equation (\ref{comm2}), consider two elements $Y_1, Y_2 \in \mathfrak{h}(X;D)$ and their respective decomposition
$$ Y_1 = \tilde{\gamma}_1 + \uparrow^{\omega}\dbar u_1,\ \ \ Y_2 = \tilde{\gamma}_2 + \uparrow^{\omega}\dbar u_2.  $$
Then outside of the divisor $D$, the holomorphic vector field $Z: = [X, Y]$ tangential to $D$ can be expressed by 
$$ Z = \uparrow^{\omega}\dbar \varphi, $$ 
where the function 
$$ \varphi:= Y_1^{\mu} \d_{\mu} u_1 - Y_2^{\mu} \d_{\mu}u_1 $$
is of class $\cC^{,\a,\b}$. And it satisfies the following Laplacian equation 
$$ \vartheta(\downarrow Z) = \Delta_{\omega}\varphi, $$ on $X-D$.
Thanks to Proposition (\ref{prop-reg}) and (\ref{prop-sch}), we conclude that $\varphi\in\cC^{4,\a,\b}$ is in the kernel of $\cD$.  
Then our claim follows, and this complete the proof of Theorem (\ref{thm-reductive}).

\end{proof}

\subsection{Further remarks}
Based on the convexity of conic Mabuchi's functional \cite{Li}, the remaining task is to deform a conic cscK metric to twisted cscK metrics from the bifurcation technique \cite{CPZ}. 
In order to do this, a linear theory is established for the 4th. order elliptic operator $\cD$ in the conic setting \cite{LZ2}. 

Then it is natural to prove that the Lichnerowicz operator $\cD: \cC^{4,\a,\b} \rightarrow \cC^{,\a,\b}$ has a countable, discrete spectrum $\Lambda \subset \mathbb{R}^+\cup\{0\}$.
Moreover, it is expected that the first eigenvalue $\lambda_1$ of $\cD$ is strictly positive.  
And this is done by invoking the standard elliptic theorems in the conic setting \cite{CW}.  

On the other hand, it is interesting to ask the reducitivity when the cone angle is larger than $\pi$. 
There are several difficulties in this case.
First, even if the Ricci tensors are cone admissible, the growth rate of the Riemannian curvatures are not expected to be, since Brendle's trick fails here. 
Second, if the metric has no model growth in the 3rd. order, 
then the integration by parts formula for the Lichnerowicz operator $\cD$ is unlikely to be true,
Therefore, a new technique from Several Complex Variables will be utilized to investigate this case as we did in \cite{LZ}.

\section{Fundamentals of $\cC^{4,\a,\b}$ space}
\label{sec-fund}
We are going to give some fundamental facts about the space $\cC^{4,\a,\b}$ in this section. 
Recall that a $\cC^{4,\a,\b}$ function $u$ is defined by the equation 
\begin{equation}
\label{app-c4}
\Delta_{\Omega_0}u\in \cC^{2,\a,\b}
\end{equation}
for any $u\in\cC^{2,\a,\b}$. We claim that this relation is independent of holomorphic coordinate charts,
and then we can introduce a (complete) norm to make it as a H\"older space.

For simplicity of demonstration, we assume $n=2$ in this section. The general case follows by a similar argument.
Let $p$ be a point on the divisor $D$, and $U, V$ are two coordinate charts such that $p\in U\cap V$. 
Suppose $(\z, z)$  is a holomorphic coordinate system on $U$ such that the divisor is defined by $\{ \z = 0\}$ in $U$,
and $(\z', w)$ is a holomorphic coordinate system on $V$ such that $D\cap V: = \{\z' =0 \}$. 
Then we can write a biholomorphic map  
$$ F(\z', z) : = \{ \z(\z', w), z(\z', w) \} $$
such that $\z' = \z\cdot f(\z, z)$ for some everywhere non-zero holomorphic function $f$ near $p$. 
Similar, we have $\z = \z' \cdot g$, where $g = f^{-1}(\z',w)$. 

\subsection{Well defined}
The local model metrics in these two charts are defined by 
$$ \Omega_0 = \b^{2} |\z|^{2\b-2} i d\z \wedge d\bar\z + idz\wedge d\bar z $$ on $U$, and 
$$ \Omega'_0 = \b^2 |\z'|^{2\b-2} i d\z'\wedge d\bar\z' + i dw\wedge d\bar w. $$

\begin{prop}
\label{prop-inv}
The relations $\Delta_{\Omega_0} u\in\cC^{2,\a,\b}$ and $\Delta_{\Omega'_0} u\in \cC^{2,\a,\b}$ are equivalent. 
\end{prop}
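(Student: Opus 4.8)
The plan is to reduce the statement to the component-wise characterization of $\cC^{4,\a,\b}$ and then exploit that the $(1,1)$-form $\ddbar u$ transforms tensorially. By Proposition \ref{prop-C4} together with the defining relation, the hypothesis $\Delta_{\Omega_0}u\in\cC^{2,\a,\b}$ is equivalent to the pair of statements $|\z|^{2-2\b}u_{\z\bar\z}\in\cC^{2,\a,\b}$ and $u_{z\bar z}\in\cC^{2,\a,\b}$, while $\Delta_{\Omega'_0}u\in\cC^{2,\a,\b}$ is equivalent to $|\z'|^{2-2\b}u_{\z'\bar\z'}\in\cC^{2,\a,\b}$ and $u_{w\bar w}\in\cC^{2,\a,\b}$. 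Since the transition map $F$ is biholomorphic and divisor-preserving, the two charts play symmetric roles (with $f$ and $g=f^{-1}$ interchanged), so it suffices to prove one implication. Thus I would assume $\Delta_{\Omega_0}u\in\cC^{2,\a,\b}$, which by Proposition \ref{prop-C4} already places $u$ in $\cC^{4,\a,\b}$ in the $U$-chart sense and makes all the growth estimates of Lemma \ref{lem-3partial} available, and then show that the two weighted primed components lie in $\cC^{2,\a,\b}$.

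First I would record the structure of the holomorphic Jacobian. From $\z'=\z f(\z,z)$ with $f$ nowhere vanishing (so $\z=\z' g$), one computes $\d_{\z'}\z=g+\z'\d_{\z'}g$, a nowhere-zero holomorphic function near $p$, whereas $\d_w\z=\z'\d_w g$ vanishes to first order along $D$ and is therefore comparable to $\z$ times a smooth factor. This asymmetry --- the normal coordinate maps to a combination of normal and tangential, but $\z$ is independent of the tangential coordinate on $D$ --- is exactly what makes the weights come out right. I would also note that $|\z'|^{2-2\b}=|\z|^{2-2\b}|f|^{2-2\b}$ differs from $|\z|^{2-2\b}$ by a smooth positive factor, that the weights $|\z|^{2-2\b}$ and $|\z|^{2\b}$ are themselves of class $\cC^{2,\a,\b}$ under the small angle condition, and that $\cC^{2,\a,\b}$ is closed under multiplication and under composition with the divisor-preserving biholomorphic change of charts (so $u\in\cC^{2,\a,\b}$ invariantly, and smooth functions are harmless multipliers).

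Then I would expand each primed component by the tensorial law and track the weight. For $|\z'|^{2-2\b}u_{\z'\bar\z'}$: the diagonal term $|\d_{\z'}\z|^2 u_{\z\bar\z}$ reduces to $\bigl(|\z|^{2-2\b}u_{\z\bar\z}\bigr)$ times smooth factors; the cross terms, being proportional to $u_{\z\bar z}$, reduce to $\bigl(|\z|^{2-2\b}u_{\z\bar z}\bigr)$ times smooth factors and are handled by equation (\ref{ap-006}); the tangential term $|\z'|^{2-2\b}|\d_{\z'}z|^2 u_{z\bar z}$ is a product of the $\cC^{2,\a,\b}$ weight, a smooth factor, and $u_{z\bar z}\in\cC^{2,\a,\b}$. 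For the unweighted $u_{w\bar w}$: the diagonal term satisfies $|\d_w\z|^2 u_{\z\bar\z}\asymp|\z|^{2}u_{\z\bar\z}=|\z|^{2\b}\bigl(|\z|^{2-2\b}u_{\z\bar\z}\bigr)$, again a product of $\cC^{2,\a,\b}$ pieces; the cross terms, where $\d_w\z\asymp\z$, reduce to $\z\,u_{\z\bar z}$ times smooth factors and are controlled by equation (\ref{ap-005}); and $|\d_w z|^2 u_{z\bar z}$ is smooth times $\cC^{2,\a,\b}$. Summing gives $\Delta_{\Omega'_0}u=\b^{-2}|\z'|^{2-2\b}u_{\z'\bar\z'}+u_{w\bar w}\in\cC^{2,\a,\b}$, which is the desired implication.

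The step I expect to be the main obstacle is precisely the weight bookkeeping across the Jacobian, and in particular the need for \emph{both} mixed estimates of Lemma \ref{lem-3partial}: the strongly weighted one (\ref{ap-006}) controls the mixed contribution to the weighted normal component $|\z'|^{2-2\b}u_{\z'\bar\z'}$, while the mildly weighted one (\ref{ap-005}) is what rescues the unweighted tangential component $u_{w\bar w}$, where the off-diagonal Jacobian entry $\d_w\z$ supplies only a single factor of $\z$. Some care is also needed to justify the algebra and composition properties of $\cC^{2,\a,\b}$ and the $\cC^{2,\a,\b}$-membership of the weights $|\z|^{2\b}$ and $|\z|^{2-2\b}$ under the small angle condition, but these are routine once the estimates above are in place.
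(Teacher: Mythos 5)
Your proposal is correct and follows essentially the same route as the paper's proof: both use the tensorial transformation of $\ddbar u$ under the divisor-preserving change of charts, exploit that $\d\z/\d\z'$ is nonvanishing while $\d\z/\d w=\z'\d_w g$ vanishes along $D$, and dispose of the mixed terms via Lemma (\ref{lem-3partial}) and of the reweighted diagonal terms via the small angle condition. Your version merely spells out the weight bookkeeping (e.g.\ the factorization $|\z|^2u_{,\z\bar\z}=|\z|^{2\b}\bigl(|\z|^{2-2\b}u_{,\z\bar\z}\bigr)$ and which of (\ref{ap-005}), (\ref{ap-006}) controls which cross term) that the paper leaves implicit.
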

\begin{proof}
Suppose $u\in\cC^{2,\a,\b}$ satisfies $\Delta_{\Omega_0}u \in \cC^{2,\a,\b}$. We can compute its derivatives in the $V$ coordinate by the chain rule 
$$ \d_{\z'} u = \frac{\d\z}{\d\z'} \d_{\z}u + \frac{\d z}{\d\z'} \d_z u, $$
$$ \d_{w} u = \frac{\d\z}{\d w} \d_{\z}u + \frac{\d z}{\d w} \d_w u. $$
Note that we have 
$$ \frac{\d\z}{\d\z'} = g + \z' \d_{\z'} g, \ \ \ \frac{\d\z}{\d w} = \z' \d_{w}g. $$
This implies that we have $\d\z / \d\z' \rightarrow g$ and $\d\z / \d w \rightarrow 0$ as the point approaching to the divisor. 
Next the complex Hessian can also be computed. 
\begin{eqnarray}
\label{ap-001}
\frac{\d^2 u}{\d\z\d\bar\z'} = \frac{\d\bar\z}{\d\bar{\z'}} \d_\z\d_{\bar\z} u + \frac{\d\bar z}{\d\bar\z'} \d_{\z}\d_{\bar z}u 
\end{eqnarray}
\begin{eqnarray}
\label{ap-002}
\frac{\d^2 u}{\d z\d\bar\z'} = \frac{\d\bar\z}{\d\bar{\z'}} \d_z \d_{\bar\z} u + \frac{\d\bar z}{\d\bar\z'} \d_{z}\d_{\bar z}u 
\end{eqnarray}
Moreover, we have 
\begin{eqnarray}
\label{ap-003}
\frac{\d}{\d\z'} \left( \frac{\d u}{\d\bar\z'} \right) &=& \frac{\d\z}{\d\z'} \d_\z\d_{\bar\z'} u + \frac{\d z}{\d\z'} \d_{\bar\z'}\d_{z}u 
\nonumber\\
&=& \left| \frac{\d\z}{\d\z'}\right|^2 \d_\z\d_{\bar\z}u + \left| \frac{\d z}{\d\z'}\right|^2 \d_z\d_{\bar z}u 
\nonumber\\
&+& 2 Re \left\{ \left( \frac{\d\z}{\d\z'}\right) \ol{ \left( \frac{\d z}{\d\z'} \right)} \d_\z\d_{\bar z} u\right\},
\end{eqnarray}
and 
\begin{eqnarray}
\label{ap-004}
\frac{\d}{\d w} \left( \frac{\d u}{\d\bar w} \right) &=& \frac{\d\z}{\d w} \d_\z\d_{\bar w} u + \frac{\d z}{\d w} \d_{\bar w}\d_{z}u 
\nonumber\\
&=& \left| \z' \d_w g \right|^2 \d_\z\d_{\bar\z}u + \left| \frac{\d z}{\d w}\right|^2 \d_z\d_{\bar z}u 
\nonumber\\
&+& 2 Re \left\{ \left( \z' \d_w g \right) \ol{ \left( \frac{\d z}{\d w} \right)} \d_\z\d_{\bar z} u\right\}.
\end{eqnarray}

It is enough to prove $|\z'|^{2-2\b}\d_{\z'} \d_{\bar\z'}u $ and $\d_w\d_{\bar w}u$ are of class $\cC^{2,\a,\b}$. 
Note that we have 
$$ |\z|^{2-2\b}\d_\z\d_{\bar\z} u\in\cC^{2,\a,\b},\ \ \  \d_z\d_{\bar z} u\in \cC^{2,\a,\b}$$
by the local definition of $\cC^{4,\a,\b}$. 
Hence the term $|\z'|^{2-2\b} \d_\z\d_{\bar\z}u$ is of class $\cC^{2,\a,\b}$ by the equivalence between $\z$ and $\z'$. 
Moreover, the term $|\z|^{2-2\b} \d_z\d_{\bar z} u$ is also of class $\cC^{2,\a,\b}$ since $ |\z|^{2-2\b}$ is thanks to the small angle condition. 
Then the Proposition follows from Lemma (\ref{lem-3partial}).
\end{proof}

\subsection{Functional spaces}
According to Donaldson \cite{Don12}, we are going to investigate the norms for the conic H\"older spaces. 
In fact, it is enough to consider these H\"older norms near the divisor $D$.

Let $p$ be a point on the divisor $D$, and $U$ be a holomorphic coordinate centered at $p$ such that the divisor $D$ is locally defined by 
$\{\z = 0 \}$. 

Put $B_1$ as the unit ball in $U$ centered at $p$. For any real valued function $u\in\cC^{,\a,\b}$, its H\"older norm on $B_1$ is introduced as 
$$ || u ||_{\cC^{,\a,\b} (B_1)} = \sup_{B_1} |u| + [\tilde{u}]_{C^\a(B_1)}, $$ 
where 
$$ \tilde{u} (|\z|^{\b-1}\z,z_2,\cdots,z_n): = u(\z,z_2,\cdots,z_n). $$
If $u$ is of class $\cC^{1,\a,\b}$ and compactly supported in $B_1$, then its H\"older norm is defined as 
$$ || u ||_{\cC^{1,\a,\b}(B_1) } =  | \tilde u |_{C^{1}(B_1)} +  [ D \tilde{u} ]_{C^{,\a}(B_1)}.$$
Note that locally this norm is equivalent to
$$ || \rho^{1-\b}\d_\z u ||_{\cC^{,\a,\b}(B_1)} + \sum_{k>1} || \d_k u ||_{\cC^{,\a,\b} (B_1)},$$
since we have 
\begin{eqnarray}
\label{app-0001}
|\z|^{1-\b} \d_{\bar{\z}} u (\z,z_2,\cdots,z_n) &=& \frac{\b +1}{2} \d_{\bar\xi}\tilde{u} (\xi,z_2,\cdots,z_n) 
\nonumber\\
&+& \frac{\b-1}{2} \frac{\z^2}{|\z|^2} \d_{\xi}\tilde u (\xi,z_2,\cdots,z_n). 
\end{eqnarray}
Moreover, locally the norm for the H\"older space $\cC^{2,\a,\b}$ is defined by
\begin{eqnarray} 
|| u ||_{\cC^{2,\a,\b}} &=& \sum_{k, i >1} || \d_k\d_i u ||_{\cC^{,\a,\b}}  + \sum_{k,l>1} || \d_k\d_{\bar l} u ||_{\cC^{,\a,\b}} + \sum_{k>1} || \rho^{1-\b}\d_k\d_\z u ||_{\cC^{,\a,\b}}
\nonumber\\
&+& \sum_{k>1} || \rho^{1-\b} \d_k\d_{\bar\z} u ||_{\cC^{,\a,\b}}  + || \rho^{2-2\b} \d_\z \d_{\bar\z} u ||_{\cC^{,\a,\b}} 
\end{eqnarray}
Note that we are lack of directions $\d_\z\d_\z u$ and $\d_{\bar\z}\d_{\bar\z} u$ in the norm of $\cC^{2,\a,\b}$. 
However, the space $\cC^{2,\a,\b}$ is still a Banach space equipped with this norm. 
For the convenience of the reader, we present a proof here. 
\begin{lemma}
\label{lem-2ba}
The space $\cC^{2,\a,\b}$ is a Banach space.
\end{lemma}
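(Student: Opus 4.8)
The plan is to exhibit $\cC^{2,\a,\b}$ as a closed subspace of a finite product of copies of $\cC^{,\a,\b}$ and $\cC^{,\a,\b}_0$, reducing completeness to a closedness statement. First I would recall that $\cC^{,\a,\b}$ and $\cC^{,\a,\b}_0$ are themselves Banach: by definition $f\mapsto\tilde f$ is a linear isometry onto a subspace of the ordinary H\"older space $C^\a$ in the $w$-coordinate, $C^\a$ is complete, and the vanishing condition defining $\cC^{,\a,\b}_0$ survives uniform limits, so it is closed. Using the full $\cC^{2,\a,\b}$ norm --- which, as is standard, also records $u$ together with the coefficients $\rho^{1-\b}\d_\z u,\ \d_k u$ of $\d u$ in $\cC^{,\a,\b}$, so that it is genuinely positive-definite --- the assignment of $u$ to the tuple of all its norm-components is a linear isometric embedding into a finite product $\prod\cC^{,\a,\b}$. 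It then suffices to show the image is closed, i.e. that any Cauchy sequence converges to a function that again lies in $\cC^{2,\a,\b}$.

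So I would take $\{u_m\}$ Cauchy in $\cC^{2,\a,\b}$. Each norm-component is then Cauchy in the relevant Banach space and converges there; in particular $u_m\to u$ and the first derivatives converge in $\cC^{,\a,\b}$. The delicate point is that the norm omits the pure normal Hessian entries $\d_\z\d_\z u$ and $\d_{\bar\z}\d_{\bar\z}u$, so one cannot directly see that the limit $u$ is $C^2$ on $X-D$. I would get around this by noting that the norm does control the full cone Laplacian,
$$ \Delta_{\Omega_0}u = \b^{-2}\rho^{2-2\b}\d_\z\d_{\bar\z}u + \sum_{j>1}\d_j\d_{\bar j}u, $$
so that $\Delta_{\Omega_0}u_m$ is Cauchy in $\cC^{,\a,\b}$. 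On every compact $K\subset X-D$ the weight $\rho$ is bounded above and below and $\Delta_{\Omega_0}$ is uniformly elliptic with smooth coefficients, so interior Schauder estimates give
$$ \|u_m-u_{m'}\|_{C^{2,\a}(K)}\le C(K,K')\big(\|\Delta_{\Omega_0}(u_m-u_{m'})\|_{C^\a(K')}+\|u_m-u_{m'}\|_{C^0(K')}\big), $$
whose right-hand side tends to $0$. Hence $u_m\to u$ in $C^{2,\a}_{\mathrm{loc}}(X-D)$, so $u\in C^2(X-D)$ and every second derivative of $u_m$ --- including the two directions absent from the norm --- converges locally uniformly to the corresponding derivative of $u$.

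Finally I would identify the $\cC^{,\a,\b}$-limits of the norm-components with the actual derivatives of $u$. Each component $q(u_m)$, say $\rho^{2-2\b}\d_\z\d_{\bar\z}u_m$ or $\rho^{1-\b}\d_k\d_\z u_m$, converges in $\cC^{,\a,\b}$ to some $Q$; since the $\cC^{,\a,\b}$ norm dominates the supremum, this convergence is uniform, hence pointwise on $X-D$, while the previous step gives pointwise convergence of the same sequence to the corresponding weighted derivative of $u$ there. Two continuous functions agreeing on the dense open set $X-D$ coincide, so $Q$ equals that weighted derivative of $u$ as an element of $\cC^{,\a,\b}$. This simultaneously yields $u,\d u,\ddbar u\in\cC^{,\a,\b}$ --- so $u\in\cC^{2,\a,\b}$ --- and $u_m\to u$ in the $\cC^{2,\a,\b}$ norm. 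I expect the sole genuine obstacle to be the one treated in the middle paragraph: reconstructing the two missing normal Hessian directions, for which controlling $\Delta_{\Omega_0}u$ and applying interior elliptic regularity away from $D$ is exactly the right device.
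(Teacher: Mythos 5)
Your argument is correct, and it diverges from the paper's proof at the decisive step. Both proofs begin the same way: each norm-component of a Cauchy sequence converges in the Banach space $\cC^{,\a,\b}$, and interior Schauder estimates on compact subsets of $X-D$ (where $\Delta_{\Omega_0}$ is uniformly elliptic) upgrade this to $C^{2,\a'}_{\mathrm{loc}}(X-D)$ convergence, so the limit $u_\infty$ is $C^2$ away from $D$. The paper then records only the single identity $\Delta_{\Omega_0}u_\infty=f_\infty$ on $X-D$ with $f_\infty\in\cC^{,\a,\b}$, and invokes Donaldson's Schauder estimate for the cone Laplacian (Propositions 2.1--2.2 of Brendle) to conclude $u_\infty\in\cC^{2,\a,\b}$. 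You instead identify \emph{every} norm-component limit $Q$ with the corresponding weighted derivative of $u_\infty$ directly, by matching the uniform convergence in $\cC^{,\a,\b}$ against the pointwise convergence on the dense set $X-D$; this avoids the conic linear theory entirely and, as a bonus, makes the convergence $u_m\to u_\infty$ in the full $\cC^{2,\a,\b}$ norm explicit (the paper leaves this implicit, though it does follow from the Schauder estimate applied to $u_i-u_\infty$). Your route is the more elementary and more general one --- it works for any weighted H\"older space defined by finitely many $\cC^{,\a,\b}$-valued components, with no PDE input beyond standard interior regularity --- while the paper's is shorter in context since Donaldson's estimate is already a standing tool there. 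Your parenthetical observation that the displayed $\cC^{2,\a,\b}$ norm must be supplemented by the zeroth- and first-order components to be positive-definite (and for the isometric embedding into $\prod\cC^{,\a,\b}$ to make sense) is a fair and worthwhile precision; the only cosmetic imprecision in your write-up is that $q(u)$ is a priori defined only on $X-D$, so the correct phrasing is that $q(u)=Q|_{X-D}$ and hence extends to the element $Q\in\cC^{,\a,\b}$, which is exactly what membership in $\cC^{,\a,\b}$ requires.
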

\begin{proof}
Suppose $\{u_i\}$ is a Cauchy sequence in the space $\cC^{2,\a,\b}$, and then the following sequence 
$$ f_i : = \Delta_{\Omega_0} u_i \in\cC^{,\a,\b} $$
forms a Cauchy sequence under the norm $||\cdot||_{\cC^{,\a,\b}}$ (we can always assume $f$ is compactly supported near a point on the divisor by multiplying a cut off function to $u$). 
Therefore, there exists a function $f_{\infty}\in\cC^{,\a,\b}$ such that we have the convergence 
\begin{equation}
\label{app-0002}
|| f_i - f_{\infty} ||_{\cC^{,\a,\b}}\rightarrow 0. 
\end{equation}
On the other hand, there exists a function $u_{\infty}\in\cC^{1,\a,\b}$ which is the limit of $u_i$ 
$$ || u_i - u_{\infty} ||_{\cC^{1, \a,\b}} \rightarrow 0. $$
It is enough to prove that $u_{\infty}$ is of class $\cC^{2,\a,\b}$. 
Let $\Sigma$ be a relative compact domain outside of the divisor $D$, and then all functions $u_i$ are in the class $C^{2,\a'}(\Sigma)$ for some $\a' > 0$, 
by interior estimates (Lemma 6.16, \cite{GT}).
Moreover, the sequence $u_i$ converges in $C^{2,\a'}(\Sigma)$ thanks to interior Schauder estimate.
Note that this limit is nothing but $u_{\infty}$ outside of the divisor. 
Hence we have 
\begin{equation}
\label{app-0003}
 \Delta_{\Omega_0} u_{\infty}  = f_{\infty}
\end{equation}
on $X-D$, and the result follows from Donaldson's estimates (see Proposition 2.1 and 2.2, \cite{Br}).  
\end{proof}

Now consider the space $\cC^{4,\a,\b}$, and we can introduce the following norms locally on each holomorphic coordinate 
\begin{equation}
\label{510}
|| u ||_{\cC^{4,\a,\b}}:= || u ||_{\cC^{2,\a,\b}} + || v ||_{\cC^{2,\a,\b}}  
\end{equation}
where $$ v: = \Delta_{\Omega_0} u. $$

In order to obtain a norm space globally, choose a locally finite covering and a partition of unity subordinated to it. 
And we can glue the local norms together by this partition of unity. 
In prior, the global norm $|| \cdot ||_{\cC^{4,\a,\b}}$ depends on the chosen holomorphic coordinate charts and partition of unity.
However, these norms are all equivalent. This is because all smooth functions are of class $\cC^{4,\a,\b}$ under small angle condition.
Then the computation in Proposition (\ref{prop-inv}) together with 
Donaldson's Schauder estimate (Proposition 2.2 \cite{Br}) and Brendle's trick (see Appendix) imply that there exists a constant $C$ such that 
$$  C^{-1} || \Delta_{\Omega_0'} u ||_{\cC^{2,\a,\b}} \leq || \Delta_{\Omega_0} u ||_{\cC^{2,\a,\b}} \leq C || \Delta_{\Omega_0'} u ||_{\cC^{2,\a,\b}}, $$
for $\Omega_0$ and $\Omega_0'$ on two different holomorphic coordinate charts.

\begin{lemma}
The space $\cC^{4,\a,\b}$ is a Banach space equipped with this norm $|| \cdot ||_{\cC^{4,\a,\b}}$. 
\end{lemma}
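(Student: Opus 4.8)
The plan is to mimic the argument used for $\cC^{2,\a,\b}$ in Lemma (\ref{lem-2ba}), now exploiting the completeness of $\cC^{2,\a,\b}$ as an already-established fact. First I would take an arbitrary Cauchy sequence $\{u_i\}$ in $\cC^{4,\a,\b}$. By the very definition of the norm in (\ref{510}), this is equivalent to saying that both $\{u_i\}$ and $\{v_i := \Delta_{\Omega_0}u_i\}$ are Cauchy sequences in $\cC^{2,\a,\b}$. Since $\cC^{2,\a,\b}$ is a Banach space, there exist limits $u_{\infty}, v_{\infty} \in \cC^{2,\a,\b}$ with $||u_i - u_{\infty}||_{\cC^{2,\a,\b}} \to 0$ and $||v_i - v_{\infty}||_{\cC^{2,\a,\b}} \to 0$. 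As in Lemma (\ref{lem-2ba}), we may reduce to the case of functions supported near a point of $D$ by a cut-off, so that the local norm description applies.

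The crux of the argument is to identify $v_{\infty}$ with $\Delta_{\Omega_0} u_{\infty}$ and to verify that $u_{\infty} \in \cC^{4,\a,\b}$. To this end, let $\Sigma$ be any relatively compact domain in $X - D$. On $\Sigma$ the model metric $\Omega_0$ is smooth and uniformly elliptic, and $\Delta_{\Omega_0} u_i = v_i$ with $v_i \to v_{\infty}$ in $C^{2,\a'}(\Sigma)$ for some $\a' > 0$, since convergence in $\cC^{2,\a,\b}$ forces convergence in $C^{2,\a'}$ on compact subsets of $X - D$. Interior Schauder estimates (Lemma 6.16, \cite{GT}) then show that $\{u_i\}$ is Cauchy in $C^{4,\a'}(\Sigma)$, and its limit must agree with $u_{\infty}$ there; hence $u_{\infty} \in C^4(X-D)$ and
$$ \Delta_{\Omega_0} u_{\infty} = v_{\infty} \quad \text{on } X - D. $$
Since $v_{\infty} \in \cC^{2,\a,\b}$, Proposition (\ref{prop-C4}) yields $u_{\infty} \in \cC^{4,\a,\b}$.

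Finally, completeness follows from the norm identity
$$ ||u_i - u_{\infty}||_{\cC^{4,\a,\b}} = ||u_i - u_{\infty}||_{\cC^{2,\a,\b}} + ||v_i - v_{\infty}||_{\cC^{2,\a,\b}} \to 0, $$
where we used $\Delta_{\Omega_0}(u_i - u_{\infty}) = v_i - v_{\infty}$. The well-definedness of the global norm up to equivalence has already been arranged by the partition-of-unity construction together with the coordinate-invariance estimates preceding the statement, so it suffices to work locally. The step I expect to be the only genuine obstacle is the identification $\Delta_{\Omega_0} u_{\infty} = v_{\infty}$ across the divisor: because $\Delta_{\Omega_0}$ degenerates along $D$, one cannot differentiate the limit directly there, and the argument must instead pass through interior regularity on $X - D$ and then re-enter the conic space via the intrinsic characterization of Proposition (\ref{prop-C4}). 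Everything else is routine bookkeeping inherited from the $\cC^{2,\a,\b}$ case.
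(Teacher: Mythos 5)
Your proof is correct and follows essentially the same strategy as the paper's: extract the $\cC^{2,\a,\b}$-limits $u_\infty$ and $v_\infty := \lim \Delta_{\Omega_0}u_i$, identify $\Delta_{\Omega_0}u_\infty = v_\infty$ on $X-D$ by interior elliptic theory after a cut-off reduction, and conclude that $u_\infty\in\cC^{4,\a,\b}$ from the intrinsic characterization (Proposition \ref{prop-C4}). The only cosmetic difference is that the paper pins down the limit by solving a local Dirichlet problem $\Delta_{\Omega_0}\underline{u}=\underline{v}$ and invoking uniqueness, whereas you use interior Schauder convergence of $u_i$ on compact subsets of $X-D$; both are standard, and in your version the paper's closing appeal to Donaldson's Schauder estimate is not even needed since $v_\infty$ is already known to lie in $\cC^{2,\a,\b}$.
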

\begin{proof}
We can again argue it in a small open neighborhood $U$ of a point on the divisor. 
Suppose $\{u_i\}$ is a Cauchy sequence under the norm $||\cdot ||_{\cC^{4,\a,\b}}$,
and then $\{ v_i\}$ also forms a Cauchy sequence in the Banach space $\cC^{2,\a,\b}$. 
Therefore, there exists a function $v_{\infty}\in \cC^{2,\a,\b}$ such that it is the limit 
$$ || v_i - v_{\infty} ||_{\cC^{2,\a,\b}}\rightarrow 0. $$
On the other hand, there exist a limit $u_{\infty}\in\cC^{2,\a,\b}$ of $u_i$
$$ || u_i - u_{\infty} ||_{\cC^{2,\a,\b}} \rightarrow 0. $$
Then it is enough to prove $u_{\infty} \in \cC^{4,\a,\b}$. 
Now for any point $q$ which is close but NOT on the divisor, we claim that 
$\Delta_{\Omega_0} u_{\infty}$ is of class $ C^{2,\a'}$ and it satisfies 
$$ \Delta_{\Omega_0} u_i \rightarrow \Delta_{\Omega_0} u_{\infty}, $$ 
in $C^{2,\a'}$ near $q$ for some $\a' > 0$. 

Suppose the distance between $q\in U$ and the divisor $D$ is larger than $1$ but less than $2$. 
Then we can consider a holomorphic coordinate ball $B_{1}(q)$ centered at $q$,
and introduce a cut-off function $\chi$ which equals to $1$ on $B_{1/2}(q)$ but compactly supported in $B_1(q)$. 
Note that the value of the functions $u_i$ and $\Delta_{\Omega_0}u_i$ are unchanged in the ball $B_{1/2}(q)$ if we replace $u_i$ by $\chi u_i$.
Hence we can assume the functions $u_i$ and $v_i$ is compactly supported in $B_1(q)$ for each $i$. 
Moreover, their limits $u_{\infty}$ and $v_{\infty}$ are also compactly supported in this ball. 

Now the sequence $v_i$ converges in $C^{2,\a'}(B_{1} (q))$ to a limit $\underline{v} \in C^{2,\a'}(B_1(q))$,
and then we can solve the Dirichlet problem on $B_1(q)$ with trivial boundary condition as 
\begin{equation}
\label{app-dir}
\Delta_{\Omega_0} \underline{u} = \underline{v}.
\end{equation}  
But note that the limit $\underline{v}$ can be nothing but $v_{\infty}$,
and then $\underline{u}$ is exactly $u_{\infty}$ by the uniqueness of Dirichlet problem, and the claim follows. 

Therefore, we have the equation 
$$\Delta_{\Omega_0}^2 u_{\infty} = \Delta_{\Omega_0}v_{\infty}\in\cC^{,\a,\b}, $$
pointwise outside of the divisor. This implies that $\Delta_{\Omega_0} u_{\infty}$ is of class $\cC^{2,\a,\b}$, 
and the result follows from Donaldson's Schauder estimate. 

\end{proof}

\subsection{From local to global}

We will give an equivalent global definition of the space $\cC^{4,\a,\b}$ here. Recall that Donaldson metric $\Omega$ is a conic K\"ahler metric on $X$ with cone angle $2\pi\b$ along the divisor $D$.  
And it is quasi-isometric to $\Omega_0$ near any point $p$ on the divisor. Therefore, we can introduce the following definition. 
\begin{defn}
\[
\cS^{4,\a,\b}: = \{ u\in \cC^{2,\a,\b}\cap C^4_{\mathbb{R}}(X-D)|\ \Delta_{\Omega}u\in\cC^{2,\a,\b} \}. 
\]
\end{defn} 

Suppose $u$ is a function of the class $\cS^{4,\a,\b}$, and then $\Delta_{\Omega} u$ is of class $\cC^{2,\a,\b}$. 
Notice that the metric $\Omega$ itself is of class $\cC^{4,\a,\b}$. Hence $u$ is of class $\cC^{4,\a,\b}$ by the same argument in the proof of Proposition (\ref{prop-sch}). 
This implies $\cS^{4,\a,\b} \subset \cC^{4,\a,\b}$. 

On the other hand, suppose $u$ is a function of class $\cC^{4,\a,\b}$. Then it is easy to compute in the tangential direction 
$$ (\Delta_{\Omega}u)_{,j\bar j} \in \cC^{,\a,\b}.  $$

In the normal direction, we have the following computation
\begin{equation}
\label{new-005}
|\z|^{2-2\b} \d_{\z}\d_{\bar\z}(\Delta_{\Omega} u) = \Delta_{\Omega} (|\z|^{2-2\b} \d_{\z}\d_{\bar\z} u) + \emph{two and three tensors}. 
\end{equation}
This is the same calculation as in equation (\ref{005}). 
And notice that all two and three tensors are cone admissible since $u\in\cC^{4,\a,\b}$, and the curvature tensors of $\Omega$ are also cone admissible. 
Therefore, we have $\Delta_{\Omega}u\in\cC^{2,\a,\b}$. And we proved the following.

\begin{prop}
\label{prop-glo}
We have 
$$ \cS^{4,\a,\b} = \ \cC^{4,\a,\b}. $$
\end{prop}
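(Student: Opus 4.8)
The plan is to establish the set equality by proving the two inclusions $\cS^{4,\a,\b}\subset\cC^{4,\a,\b}$ and $\cC^{4,\a,\b}\subset\cS^{4,\a,\b}$ separately, with the single guiding observation that the model metric $\Omega$ is itself of class $\cC^{4,\a,\b}$ (Lemma \ref{lem-model}), is quasi-isometric to the flat model $\Omega_0$, and has cone admissible curvature tensors (Lemmas \ref{lem-tangent}, \ref{lem-normal}). These are precisely the structural hypotheses under which the Schauder machinery of Proposition \ref{prop-sch} was built, so the cscK condition plays no role here. Since membership in $\cC^{4,\a,\b}$ is a local statement equivalent to $\Delta_{\Omega_0}u\in\cC^{2,\a,\b}$ (Proposition \ref{prop-C4}) and is independent of the holomorphic chart (Proposition \ref{prop-inv}), it suffices to argue near an arbitrary point $p\in D$.

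For $\cS^{4,\a,\b}\subset\cC^{4,\a,\b}$, I would take $u\in\cS^{4,\a,\b}$, so that $\Delta_\Omega u\in\cC^{2,\a,\b}$, and re-run the chain of Lemmas \ref{lem-D003}--\ref{lem-D006} with the conic cscK metric replaced by $\Omega$. Inspecting those lemmas shows that they invoke only the model growth of the Christoffel symbols, the cone admissibility of the curvature, and the hypothesis $\Delta_{(\cdot)}u\in\cC^{2,\a,\b}$; each of these holds for $\Omega$ by Lemma \ref{lem-model} together with the curvature lemmas. This yields $u_{,j\bar j}\in\cC^{2,\a,\b}$ for every $j>1$ and $|\z|^{2-2\b}\d_\z\d_{\bar\z}u\in\cC^{2,\a,\b}$, hence $\Delta_{\Omega_0}u\in\cC^{2,\a,\b}$ and $u\in\cC^{4,\a,\b}$.

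For the reverse inclusion $\cC^{4,\a,\b}\subset\cS^{4,\a,\b}$, I would take $u\in\cC^{4,\a,\b}$ and show $w:=\Delta_\Omega u\in\cC^{2,\a,\b}$. By Donaldson's estimate it is enough to verify $\Delta_{\Omega_0}w\in\cC^{,\a,\b}$, that is, to control the tangential pieces $\d_j\d_{\bar j}w$ and the normal piece $|\z|^{2-2\b}\d_\z\d_{\bar\z}w$. In the tangential directions one commutes $\d_j\d_{\bar j}$ past $\Delta_\Omega$, producing $\Delta_\Omega(u_{,j\bar j})$ together with curvature and lower order contractions; since $u_{,j\bar j}\in\cC^{2,\a,\b}$ and $\Delta_\Omega$ carries $\cC^{2,\a,\b}$ into $\cC^{,\a,\b}$ by the inverse-metric and mixed-term estimates in the proof of Lemma \ref{lem-lich} (which use only Lemmas \ref{lem-vol} and \ref{lem-vol2}, valid for any $\cC^{4,\a,\b}$ metric), and since the remaining tensors are cone admissible by Proposition \ref{prop-conn} and the curvature lemmas, one gets $\d_j\d_{\bar j}w\in\cC^{,\a,\b}$. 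In the normal direction I would invoke the identity (\ref{new-005}), the exact analogue of (\ref{005}), namely $|\z|^{2-2\b}\d_\z\d_{\bar\z}(\Delta_\Omega u)=\Delta_\Omega(|\z|^{2-2\b}\d_\z\d_{\bar\z}u)+(\text{two and three tensors})$, whose leading term is controlled because $|\z|^{2-2\b}\d_\z\d_{\bar\z}u\in\cC^{2,\a,\b}$ while the correction tensors are again cone admissible. Combining the two estimates gives $\Delta_{\Omega_0}w\in\cC^{,\a,\b}$, hence $w\in\cC^{2,\a,\b}$ and $u\in\cS^{4,\a,\b}$.

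The main obstacle is not conceptual but the bookkeeping concealed in the phrase ``two and three tensors'': one must check that every commutator term arising when $\Delta_\Omega$ is moved past $|\z|^{2-2\b}\d_\z\d_{\bar\z}$ --- namely the derivatives of the Christoffel symbols, i.e. the curvature tensors of $\Omega$, contracted against third covariant derivatives of $u$ --- is cone admissible. This is exactly where the cone admissibility of $R_{\mu\bar\b\z\bar\z}$ for $\Omega$ (Lemma \ref{lem-normal}) and the third-order estimates of Proposition \ref{prop-conn} are indispensable, and it is the reason the computation reproduces (\ref{005}) line for line rather than requiring a fresh estimate.
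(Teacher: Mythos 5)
Your proposal is correct and follows essentially the same route as the paper: the inclusion $\cS^{4,\a,\b}\subset\cC^{4,\a,\b}$ via the Schauder argument of Proposition \ref{prop-sch} applied with the metric $\Omega$ (which is legitimate since, as you note, Lemmas \ref{lem-D003}--\ref{lem-D006} use only the model growth and cone admissibility of the curvature, not the cscK condition), and the reverse inclusion via the commutator identity (\ref{new-005}) modeled on (\ref{005}). The paper's proof is a condensed version of exactly this argument.
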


Moreover, we can also introduce the following norm for any $u\in\cS^{4,\a,\b}$
$$ || u ||_{\cS^{4,\a,\b}} = || u ||_{\cC^{2,\a,\b}} + || \Delta_{\Omega} u ||_{\cC^{2,\a,\b}}. $$
This norm is equivalent to the previous defined norms $||\cdot ||_{\cC^{2,\a,\b}}$ thanks to the calculation in equation $(\ref{new-005})$. 
\section{Appendix}
\label{app}
For the convenience of the reader, we present details of Brendle's trick \cite{Br} in this section. 
Let $D: = B_1(0) $ be the holomorphic unit disk in $\mathbb{C}$, and $D^*$ be the punctured disk. 
Suppose $w$ is a uniformly bounded smooth function on $D^*$, and $h$ is a H\"older continuous function on $D$ such that 
$$h(z): = |z|^{2\beta -2} f(|z|^{\beta -1}z),$$
for some $C^{\a}$ continuous function $f$ defined on $D$ with $f(0) = 0$. Hence we have 
$$ | h(z) | \leq [\tilde f]_{C^\a} |z|^{2\beta-2 + \a\b},  $$ 
where $\a\b < 1 - 2\b$, and $\beta < 1/2$. Put $|z| = r$, and 
\begin{equation}
\label{app2-name}
 F(z): = \d_z \d_z w(z) + (1-\b)z^{-1} \d_z w(z) , 
\end{equation}
and then we claim the following:

\begin{prop}\label{brendle}
Suppose the function $w$ satisfies the following Laplacian equation on $D^*$: 
\begin{equation}
\label{lap}
\frac{\d^2 w}{ \d z \d \bar{z}}  = h.
\end{equation}
Then we have the following estimate near the origin of $D$:
$$ || \ r^{2-2\b} F(z)  \ ||_{\cC^{,\a,\b}} \leq  C || f ||_{\cC^{,\a,\b}}. $$
\end{prop}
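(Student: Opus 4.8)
The plan is to reduce the statement to a one–dimensional problem by expanding in Fourier modes in the angular variable, to solve the resulting ordinary differential equations explicitly, and then to reassemble and estimate everything in the coordinate $\xi=|z|^{\beta-1}z$ underlying the $\cC^{,\alpha,\beta}$ norm. The starting point is the algebraic identity
\[
\partial_z\bigl(|z|^{2-2\beta}\partial_z w\bigr)=|z|^{2-2\beta}\bigl(\partial_z^2 w+(1-\beta)z^{-1}\partial_z w\bigr)=r^{2-2\beta}F,
\]
which is the one-variable version of the computation behind equation (\ref{110}). Thus it suffices to control $\partial_z(|z|^{2-2\beta}\partial_z w)$, and the weight $r^{2-2\beta}$ is exactly what converts $z$-homogeneity into $\xi$-homogeneity, since $|\xi|=r^\beta$.

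Writing $z=re^{i\theta}$ and expanding $w=\sum_k w_k(r)e^{ik\theta}$, $h=\sum_k h_k(r)e^{ik\theta}$, the equation $\partial_z\partial_{\bar z}w=h$ becomes the Bessel-type ODE $w_k''+r^{-1}w_k'-k^2r^{-2}w_k=4h_k$. Setting $u_k:=r^k w_k$ collapses this to the first–order equation $(r^{1-2k}u_k')'=4h_k r^{1-k}$, which integrates explicitly. Carrying the result through $a_k:=\tfrac12 r^{-k}u_k'$ (so that $\partial_z w=\sum_k a_k e^{i(k-1)\theta}$) and through the weighted operator, a direct computation yields the mode decomposition
\begin{equation*}
r^{2-2\beta}F_k=r^{2-2\beta}h_k+(2k-2\beta)\,r^{k-2\beta}\!\int^{r} h_k(s)\,s^{1-k}\,ds+c_k\,r^{k-2\beta},
\end{equation*}
where $r^{2-2\beta}F=\sum_k (r^{2-2\beta}F_k)\,e^{i(k-2)\theta}$ and $c_k$ is the free constant from the homogeneous solution.

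This display already exhibits the mechanism. The first term reassembles to $e^{-2i\theta}f(\xi)$, which lies in $\cC^{,\alpha,\beta}$ with norm $\lesssim\|f\|_{\cC^{,\alpha,\beta}}$: the bounded but discontinuous factor $e^{-2i\theta}=(\bar\xi/|\xi|)^2$ is tamed because $f(0)=0$ forces the $C^\alpha$ vanishing $|f(\xi)|\lesssim|\xi|^\alpha$. The constants $c_k$ are fixed by the hypothesis that $w$ is uniformly bounded on $D^*$: for each mode this either removes the unacceptable homogeneous piece or reabsorbs it at the boundary, so $c_k r^{k-2\beta}$ contributes nothing beyond what is already dominated by $h$. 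The integral term is the genuine content; inserting $h_k(s)=s^{2\beta-2}\hat f_k(s^\beta)$ with $|\hat f_k(s^\beta)|\lesssim s^{\alpha\beta}$ gives an integrand of size $s^{2\beta-1+\alpha\beta-k}$, and here the small angle condition $\alpha\beta<1-2\beta$, i.e. $\beta(2+\alpha)<1$, is exactly the inequality that fixes the integration endpoint consistently (integrate from $0$ for $k\le 0$, from the boundary for $k\ge1$), keeps the critical exponent away from an integer so that no logarithmic resonance appears, and produces the homogeneity $(2k-2\beta)r^{\alpha\beta}=(2k-2\beta)|\xi|^{\alpha}$ — precisely the $C^\alpha$ vanishing needed, with a built-in gain that also supplies the Hölder seminorm.

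The main obstacle I expect is not any single mode but the uniform, summable control of all modes at once when passing from these pointwise mode estimates back to the $C^\alpha$–seminorm of $r^{2-2\beta}F$ as a function of $\xi$. A mode-by-mode bound of the right size is routine, but reconstituting the Hölder seminorm of the full series requires either a Green's-kernel reformulation, replacing $\sum_k$ by an explicit singular integral against $h$ and estimating it as a Calder\'on--Zygmund-type operator adapted to the cone, or a careful grouping of the low modes $k=0,1,2$, where the powers $r^{k-2\beta}$ decay least and the homogeneous solutions are most delicate. Verifying that the constants $c_k$ selected from boundedness of $w$ do not destroy the $\cC^{,\alpha,\beta}$ bound, together with summing these borderline modes, is where the real work lies; it is precisely the strict inequality $\alpha\beta<1-2\beta$, rather than mere integrability, that provides the uniform room to carry out the summation and to match the target exponent $\alpha$.
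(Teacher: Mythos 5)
Your separation-of-variables scheme is genuinely different from the paper's argument, and the algebra is correct: the identity $\partial_z\bigl(|z|^{2-2\beta}\partial_z w\bigr)=r^{2-2\beta}F$, the reduction to $(r^{1-2k}u_k')'=4h_kr^{1-k}$, the resulting mode formula for $r^{2-2\beta}F_k$, and the role of the endpoint choice and of $\alpha\beta<1-2\beta$ in producing the exponent $r^{\alpha\beta}=|\xi|^{\alpha}$ all check out. But the step you flag as "the main obstacle" is not a technicality to be deferred; it is where the proof fails as written. The mode-by-mode sup bound your computation yields is
$$\Bigl|(2k-2\beta)\,r^{k-2\beta}\!\int^{r} h_k(s)s^{1-k}\,ds\Bigr|\ \lesssim\ \frac{2k-2\beta}{\,|k-2\beta-\alpha\beta|\,}\,[\tilde f]_{C^{\alpha}}\,|\xi|^{\alpha},$$
and the prefactor tends to $2$ as $k\to\infty$, so the series of these bounds diverges. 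The H\"older regularity of $\theta\mapsto f(s^{\beta}e^{i\theta})$ improves $|h_k|$ only by a factor $k^{-\alpha}$, and $\sum_k k^{-\alpha}$ still diverges for $\alpha<1$: this is the classical failure of termwise Fourier estimates for Schauder-type bounds, not an artifact of crude constants. Your first proposed remedy (grouping the low modes $k=0,1,2$) does not touch this, since the divergence comes from the tail, not from the borderline low modes. Your second remedy --- rewriting the sum as a singular integral against $h$ and estimating it as a Calder\'on--Zygmund operator adapted to the cone --- is precisely the paper's proof (the Cauchy--Pompeiu representation of $\partial_z w$ in equation (\ref{green}) combined with rescaled Schauder estimates on the Whitney balls $B_{\epsilon|z_0|}(z_0)$), so invoking it abandons your route rather than completing it.

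There is a second, independent gap: even for a single mode you only establish the pointwise bound $|r^{2-2\beta}F_k|\lesssim|\xi|^{\alpha}$, and a bound $|g(\xi)|\le C|\xi|^{\alpha}$ does not imply $[g]_{C^{\alpha}}\le C'$; one must compare values at two nearby points a comparable distance from the origin. The paper devotes its entire subsection \emph{Outside the origin} to exactly this, using the interior Schauder seminorm of $\partial_z^2 w$ on Whitney balls together with Lemmas (\ref{lem-1}) and (\ref{lem-distance}) comparing $|z_0-z_1|$ with $|\xi_0-\xi_1|$, whereas your write-up asserts a "built-in gain that also supplies the H\"older seminorm" without argument. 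Two smaller points: the homogeneous contributions $c_kr^{k-2\beta}$ are controlled by $\sup_{D^*}|w|$ and the boundary data of $w$, not by $\|f\|_{\cC^{,\alpha,\beta}}$, so they must be absorbed using that the estimate is only claimed near the origin (the paper's floating "$+C$" terms have the same status); and the $k=1$ homogeneous piece contributes a multiple of $r^{1-2\beta}e^{-i\theta}=|\xi|^{1/\beta-3}\bar\xi$, whose $C^{\alpha}$ regularity in $\xi$ requires exactly $\alpha\beta\le 1-2\beta$ --- worth recording, since the small angle condition enters in more places than the one you identify.
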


We will demonstrate our proof from step to step. 

\subsection{Distribution theory}
We first claim equation (\ref{lap}) holds on $D$ in the sense of distributions. The argument is standard here. 
Notice that $\Delta w - h$ is a distribution supported at the origin. Hence we can write it as 
$$ \Delta w - h = \sum_{|\a | = 0} C_{\a} \d^{\a}\delta, $$
where $|\a|$ is the order of the index $\a$, and only finite many constants $C_{\a}$ are non-zero. 

Next we are going to prove by contradiction. Suppose there is a constant $C_{\a}$ which is not zero. 
Define the following finite set of indexes. 
$$\mathcal{A}: = \{ l\geq 0 \  |\ \ \ C_{\a}\neq 0\ \emph{for some}\  |\a| =l  \}. $$
Take $\chi$ to be a smooth testing function compactly supported on $D$, such that 
$$\sum_{|\a| =l }C_{\a}\d^{\a}\chi(0) \neq 0$$ for each $l \in \mathcal{A}$.
Let $\chi_k = \chi(kz)$, and consider the following actions.
$$ h (\phi_k) = \int_D h\chi_k = \int_{|z|< 1/k} h\chi(kz) \leq \int_{|z| < 1/k} |h|, $$
and it converges to zero since $h\in L^1$.
On the other hand, we have another action.
$$\Delta w (\chi_k) = \int_D w \Delta \chi_k = \int_D k^2 w \Delta\chi (kz) = \int_D w(z/k) \Delta \chi (z), $$
but 
$$ |\Delta w (\chi_k)| = \left|  \int_D w(z/k) \Delta \chi (z) \right| \leq \sup_{D^*}|w| \int_D |\Delta \chi| < +\infty. $$
However, their difference behaves like 
$$(\Delta w - h) (\chi_k) = \sum_{|\a| \in \mathcal{A}} C_{\alpha} k^{|\a|} \d^{\a}\chi(0) \thicksim k^m,$$
where the index $m$ is the maximum element in $\mathcal{A}$. Hence this action diverges to infinity when $k\rightarrow +\infty$,
which is a contradiction to our previous boundedness result. 

\subsection{interior estimate}
For simplicity, we view $w$ as a bounded function on the punctured disk with radius $2$. 
Pick up an arbitrary point $z_0$ in $D^*$, and let $r_0 = \ep|z_0|$, for some $\ep < 1/2$.  
Consider the ball $B_{r_0}(z_0)$ centered at $z_0$ with radius $r_0$, we can define the following function on the open ball $B_1(0)$. 
$$ v(z): = w (z_0 + r_0 z). $$
Thanks to the local Schauder estimates, we have 
\begin{equation}
\label{sch}
|| \nabla v ||_{C^0(B_{1/2})} \leq  || v ||_{C^{2,\a}(B_{1/2})} \leq C ( ||\Delta v ||_{C^{,\a} (B_1)} + || v ||_{C^0(B_1)}).
\end{equation} 
The change of variables after rescaling implies the following equations: 
\begin{equation}
\label{res1}
r_0^2 \Delta w (z_0 + r_0 z) = \Delta v (z),
\end{equation} 
and 
\begin{equation}
\label{res2}
 [ r_0^{2} \Delta w ]_{C^{,\a}(B_{r_0}(z_0))} =  r_0^{-\a} [ \Delta v ]_{C^{,\a}(B_1)}.
\end{equation}
On the other side, 
$$ || \nabla v ||_{C^0(B_{1/2})} =  r_0 || \nabla w ||_{C^0 (B_{r_0}(z_0))}. $$
Combining above inequalities, we have for $r_0$ small
\begin{eqnarray}
\label{res3}
| \nabla w | (z_0) &\leq& || \nabla w ||_{C^0 (B_{r_0}(z_0))} 
\nonumber\\
&\leq& C ( r_0^{1+\a} [ h ]_{C^{,\a}(B_{r_0}(z_0))} + r_0 \sup_{B_{r_0}(z_0)} |h| + r_0^{-1}|| w ||_{C^0(B_{2r_0}\backslash  \{0\})} )
\nonumber\\
&\leq& C' r_0^{-1}.
\end{eqnarray}
The last inequality follows from the the estimate 
$$r_0^{1+\a}|| h ||_{C^{,\a}(B_{r_0}(z_0))} = O(r_0^{2\b + \a\b  -1 }), $$
when $\ep$ is small. And it can be proved by the following lemma.
\begin{lemma}
\label{lem-1}
Let $r_0 = \ep|z_0|$. When $\ep$ is small enough, for all $z_1, z_2\in B_{r_0}(z_0)$, we have 
\begin{equation}
\label{holder1}
| |z_1|^{\beta-1}z_1 - |z_2|^{\beta-1}z_2 | \leq C(\ep) r_0^{\beta-1} |z_1 - z_2|
\end{equation}
\end{lemma}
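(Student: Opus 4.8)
The plan is to read the left-hand side of (\ref{holder1}) as the increment of the map $\Phi(z):=|z|^{\beta-1}z$ between $z_1$ and $z_2$, and to bound this increment by the supremum of the differential of $\Phi$ over the ball via the mean value inequality. For this to be legitimate I first need two elementary facts about $B_{r_0}(z_0)$: it is convex, so the segment $[z_1,z_2]$ stays inside it, and it is bounded away from the origin, so that $\Phi$ is smooth along that segment. The latter is exactly where the smallness of $\ep$ enters: for $z\in B_{r_0}(z_0)$ with $r_0=\ep|z_0|$ and $\ep<1$ one has $|z|\ge |z_0|-|z-z_0|>(1-\ep)|z_0|>0$.

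The computational heart is a derivative estimate. Writing $\Phi=(z\bar z)^{(\beta-1)/2}z$ and differentiating in the Wirtinger calculus, I would obtain
\[
\d_z\Phi=\tfrac{\beta+1}{2}\,|z|^{\beta-1},\qquad \d_{\bar z}\Phi=\tfrac{\beta-1}{2}\,|z|^{\beta-3}z^2,
\]
so that, using $\beta<1$,
\[
|\d_z\Phi|+|\d_{\bar z}\Phi|=\tfrac{\beta+1}{2}|z|^{\beta-1}+\tfrac{1-\beta}{2}|z|^{\beta-1}=|z|^{\beta-1}.
\]
Since this quantity dominates the operator norm of the real differential $d\Phi$, integrating $d\Phi$ along the segment $[z_1,z_2]$ gives
\[
\bigl|\,|z_1|^{\beta-1}z_1-|z_2|^{\beta-1}z_2\,\bigr|\le \Bigl(\sup_{z\in B_{r_0}(z_0)}|z|^{\beta-1}\Bigr)\,|z_1-z_2|.
\]

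It then remains only to turn the supremum into the stated power of $r_0$. Because $\beta-1<0$ and $|z|>(1-\ep)|z_0|$ on the ball, I have $|z|^{\beta-1}<(1-\ep)^{\beta-1}|z_0|^{\beta-1}$, and substituting $|z_0|=r_0/\ep$ rewrites the right-hand side as $(1-\ep)^{\beta-1}\ep^{1-\beta}\,r_0^{\beta-1}$. This proves (\ref{holder1}) with the explicit constant $C(\ep)=(1-\ep)^{\beta-1}\ep^{1-\beta}$, which stays bounded (in fact tends to $0$) as $\ep\to0$. I do not expect any genuine obstacle here; the only point demanding attention is the verification that the ball avoids the origin, which is what validates both the derivative computation and the mean value step, and which is precisely guaranteed by the hypothesis that $\ep$ is small.
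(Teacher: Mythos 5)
Your proof is correct, but it takes a genuinely different route from the paper. The paper works in polar coordinates, reduces to $z_1=r_1$, $z_2=r_2e^{i\theta}$ with $r_2<r_1$, and splits into the cases $\theta=0$ (where it invokes the H\"older continuity of $r\mapsto r^\beta$) and $0<\theta<\pi/4$ (where it compares the arc length $\theta r_2$ with the chord $|z_1-z_2|$ via the factor $K(\theta)=\theta/(2\sin(\theta/2))$). You instead treat $\Phi(z)=|z|^{\beta-1}z$ as a smooth map on the annular region containing $B_{r_0}(z_0)$, compute $|\d_z\Phi|+|\d_{\bar z}\Phi|=|z|^{\beta-1}$ (which is indeed the operator norm of the real differential of a map $w\mapsto aw+b\bar w$), and apply the mean value inequality along the segment $[z_1,z_2]$, which stays in the convex ball and avoids the origin precisely because $\ep<1$. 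Your computation of the Wirtinger derivatives is right, and the resulting explicit constant $C(\ep)=(1-\ep)^{\beta-1}\ep^{1-\beta}$ is valid (its decay as $\ep\to 0$ is harmless, since the lemma only asks for some $C(\ep)$). What your argument buys is the elimination of all case analysis and WLOG reductions, a uniform treatment of arbitrary pairs in the ball, and an explicit constant; what the paper's more hands-on polar decomposition buys is that the same ingredients (the comparison of $|r_1^\beta-r_2^\beta e^{i\theta}|$ with both $|r_1^\beta-r_2^\beta|$ and $\theta r_2^\beta$) are reused essentially verbatim to prove the reverse inequality in Lemma (\ref{lem-distance}), which a pure upper bound on $\|d\Phi\|$ cannot deliver.
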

\begin{proof}
Let $z_1 = r_1, z_2 = r_2 e^{i\theta}$, and assume $r_2 < r_1$. Suppose $\theta =0$ first. Then equation (\ref{holder1}) follows from the H\"older continuity of the function $r^{\beta}$.
\begin{equation}
\label{holder2}
\frac{| r_1^{\beta} - r_2^{\beta}|}{|r_1 - r_2|} \leq C |r_1 - r_2|^{\beta-1} \leq C(\ep) r_0^{\beta-1}. 
\end{equation}
Next we can assume $0 < \theta < \pi/4$. when $\ep$ is small, the following inequality holds.
$$ |r_1^{\beta} - r_2^{\beta} e^{i\theta}| \leq |r_1^{\beta} - r_2^{\beta}| + \theta r_2^{\beta}, $$
where $\theta r_2^{\beta}$ is a small arc length. Moreover, the arc length $\theta r_2$ is equivalent to the line segment $|z_1 - z_2|$. 
That is to say, there exists a uniform bounded function $K(\theta) = \frac{\theta} {2\sin (\theta/2)}$, such that 
$$ \theta r_2  = K | z_1 - z_2|,$$ and here we can assume $ 1< K < 2$ for $\ep$ small. 
Finally, we have 
\begin{eqnarray}
\label{holder3}
\frac{| r_1^{\beta} - r_2^{\beta} e^{i\theta}|}{|r_1 - r_2 e^{i\theta}|}&\leq& \frac{| r_1^{\beta} - r_2^{\beta}|}{|r_1 - r_2 e^{i\theta}|} + \frac{\theta r_2}{|r_1 - r_2 e^{i\theta}|} r_2^{\b-1}
\nonumber\\
&\leq& \frac{| r_1^{\beta} - r_2^{\beta}|}{|r_1 - r_2|} + K r_2^{\b-1}
\nonumber\\
&\leq& C_1(\ep) r_0^{\beta -1} + Kr_2^{\beta-1} \leq C(\ep)r_0^{\beta -1},
\end{eqnarray}
where we used the fact that 
$$ |r_1 - r_2 e^{i\theta}| > |r_1 - r_2|, $$ 
for $r_1 > r_2$ and $\theta < \pi /4$.  
\end{proof}

In fact, these two distance functions are equivalent to each other in the ball $B_{r_0}(z_0)$
\begin{lemma}
\label{lem-distance}
For any $\ep$ small enough, and $z_1, z_2 \in B_{r_0}(z_0)$, there exists a constant $c(\ep)$ such that 
\begin{equation}
\label{holder12}
| |z_1|^{\beta-1}z_1 - |z_2|^{\beta-1}z_2 |  \geq c(\ep) r_0^{\beta-1} |z_1 - z_2|
\end{equation}
\end{lemma}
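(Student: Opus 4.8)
The plan is to establish (\ref{holder12}) as the reverse companion to Lemma (\ref{lem-1}), thereby showing that $\Phi(z):=|z|^{\beta-1}z$ is bi-Lipschitz on $B_{r_0}(z_0)$ with constants of order $r_0^{\beta-1}$. Since $\Phi(e^{i\phi}z)=e^{i\phi}\Phi(z)$, both sides of (\ref{holder12}) are invariant under the simultaneous rotation $z_j\mapsto e^{i\phi}z_j$, so I would first normalize to $z_1=r_1>0$ on the positive real axis and $z_2=r_2 e^{i\theta}$ with $r_2\leq r_1$. Because $z_1,z_2\in B_{r_0}(z_0)$ with $r_0=\ep|z_0|$, every radius obeys $(1-\ep)|z_0|\leq r_j\leq(1+\ep)|z_0|$, so all radii are comparable to $|z_0|=r_0/\ep$; moreover the angle subtended at the origin by $B_{r_0}(z_0)$ is at most $2\arcsin\ep$, hence $0\leq\theta<\pi/4$ once $\ep$ is small.

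The key device is to split each distance into a radial and an angular part via the elementary identities
\begin{equation*}
|r_1^{\beta}-r_2^{\beta}e^{i\theta}|^2=(r_1^{\beta}-r_2^{\beta})^2+4r_1^{\beta}r_2^{\beta}\sin^2(\theta/2),\qquad |r_1-r_2 e^{i\theta}|^2=(r_1-r_2)^2+4r_1 r_2\sin^2(\theta/2).
\end{equation*}
It then suffices to bound each term on the left below by the corresponding term on the right, up to the factor $r_0^{2(\beta-1)}$. For the radial part the mean value theorem gives $r_1^{\beta}-r_2^{\beta}=\beta\,\xi^{\beta-1}(r_1-r_2)$ for some $\xi\in(r_2,r_1)$; since $\xi\leq(1+\ep)|z_0|$ and $\beta-1<0$, this produces $r_1^{\beta}-r_2^{\beta}\geq c_1(\ep)\,r_0^{\beta-1}(r_1-r_2)$ with $c_1(\ep)=\beta(1+\ep)^{\beta-1}\ep^{1-\beta}$. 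For the angular part I would write $r_1^{\beta}r_2^{\beta}=(r_1 r_2)^{\beta-1}(r_1 r_2)$ and use $r_1 r_2\leq((1+\ep)|z_0|)^2$ together with $\beta-1<0$ to obtain $r_1^{\beta}r_2^{\beta}\geq c_2(\ep)\,r_0^{2(\beta-1)}r_1 r_2$ with $c_2(\ep)=(1+\ep)^{2(\beta-1)}\ep^{2(1-\beta)}$.

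Setting $c(\ep):=\min\{c_1(\ep),\sqrt{c_2(\ep)}\,\}$ and adding the two estimates yields
\begin{equation*}
|\Phi(z_1)-\Phi(z_2)|^2\geq c(\ep)^2 r_0^{2(\beta-1)}\bigl[(r_1-r_2)^2+4r_1 r_2\sin^2(\theta/2)\bigr]=c(\ep)^2 r_0^{2(\beta-1)}|z_1-z_2|^2,
\end{equation*}
and taking square roots gives (\ref{holder12}).

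As for difficulty, this is a direct computation rather than a conceptual step, so there is no serious obstacle; the only point demanding care is to confirm that none of the $\ep$-dependent constants degenerates. This is guaranteed precisely because the radii $r_j$ stay bounded away from $0$ and from $\infty$ relative to $|z_0|$, so the negative powers $\xi^{\beta-1}$ and $(r_1 r_2)^{\beta-1}$ stay comparable to $r_0^{\beta-1}$ and $r_0^{2(\beta-1)}$ respectively, with constants depending only on $\ep$ and $\beta$. Combined with Lemma (\ref{lem-1}), this confirms that $\Phi$ is bi-Lipschitz on $B_{r_0}(z_0)$ with both constants of order $r_0^{\beta-1}$, which is what is needed for the rescaled Schauder argument.
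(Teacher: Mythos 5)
Your proof is correct. It rests on the same two ingredients as the paper's argument --- the mean value theorem for the radial difference $r_1^{\beta}-r_2^{\beta}$, and the comparability of all radii in $B_{r_0}(z_0)$ with $|z_0|=r_0/\ep$ for the angular contribution --- but the packaging is genuinely different. The paper bounds $|r_1^{\beta}-r_2^{\beta}e^{i\theta}|$ from below by $\max\{|r_1^{\beta}-r_2^{\beta}|,\,K(\theta)\theta r_2^{\beta}\}$ and $|r_1-r_2e^{i\theta}|$ from above by $2\max\{|r_1-r_2|,\,\theta r_2\}$, then splits into the two cases $|r_1-r_2|\gtrless\theta r_2$; this requires the arc-length approximations and the restriction to small $\theta$ (hence small $\ep$) to control the factor $K(\theta)$. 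Your law-of-cosines identity $|a-be^{i\theta}|^2=(a-b)^2+4ab\sin^2(\theta/2)$ makes the radial/angular decomposition exact, so the termwise comparison goes through with no case analysis and, in fact, with no restriction on $\theta$ at all --- the smallness of $\ep$ enters only through the explicit constants $c_1(\ep)$, $c_2(\ep)$, which visibly do not degenerate. The same identity would also streamline the paper's estimate (\ref{app2-0110}) and the companion upper bound of Lemma (\ref{lem-1}). Both routes are elementary; yours is a little cleaner and slightly more general, while the paper's is closer in spirit to the arc-length heuristics it uses elsewhere in the appendix.
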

\begin{proof}
Again we can assume $r_2 < r_1$ and $z_0 = z_1$. When $\theta =0$, the inequality follows from the mean value equation 
$$ r_1^{\b} - r_2^{\b} = \b \tilde{r}^{\b-1} (r_1 -r_2) \geq C r_0^{\b-1} | r_1 - r_2| ,$$
for some $r_2 \leq \tilde r\leq r_1$. Note that we have the following estimates for $r_1 > r_2$
\begin{equation}
\label{app2-0110}
| r_1^\b - r_2^\b e^{i\theta} | \geq \max \{ | r^{\b}_1 - r^{\b}_2 |, K(\theta) \theta r_2^{\b} \}.
\end{equation}
Moreover, we have 
$$ |r_1 - r_2 e^{i\theta}| \leq 2 \max \{ |r_1 - r_2|, \theta r_2 \}. $$
Hence we can discuss case by case. First if $ | r_1 - r_2| \geq \theta r_2 $, then we have 
$$ \frac{ | r_1^{\b} - r_2^{\b} e^{i\theta}|}{ | r_1 - r_2 e^{i\theta}|} \geq \frac{ |r_1^{\b} - r_2^{\b}|}{ 2|r_1 -r_2|} \geq Cr_0^{\b-1}. $$
Second, if $|r_1 -r_2| \leq \theta r_2$, then we still have 
$$\frac{ | r_1^{\b} - r_2^{\b} e^{i\theta}|}{ | r_1 - r_2 e^{i\theta}|} \geq \frac{K\theta r_2^{\b}}{\theta r_2} \geq C r_0^{\b-1},  $$
and the result follows. 
\end{proof}

\begin{lemma}
We have 
\label{lem-2}
$$ [ h ]_{C^{\a}(B_{r_0}(z_0))} \leq C[\tilde f]_{C^\a} r_0^{2\beta-2 + \a(\b-1)},$$
for any $r_0$ small enough. 
\end{lemma}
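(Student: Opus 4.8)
The plan is to estimate the difference quotient of $h$ directly on $B_{r_0}(z_0)$, separating the contribution of the radial weight $|z|^{2\b-2}$ from that of the conic factor $f(|z|^{\b-1}z)$. For $z_1,z_2\in B_{r_0}(z_0)$, writing $w_i:=|z_i|^{\b-1}z_i$, I would add and subtract to split
\[
h(z_1)-h(z_2) = |z_1|^{2\b-2}\bigl(f(w_1)-f(w_2)\bigr) + \bigl(|z_1|^{2\b-2}-|z_2|^{2\b-2}\bigr)f(w_2),
\]
and bound the two terms separately. Throughout I would exploit that on $B_{r_0}(z_0)$ with $r_0=\ep|z_0|$ and $\ep<1/2$ every modulus $|z_i|$ is comparable to $r_0$: indeed $(1-\ep)\ep^{-1} r_0 \le |z_i| \le (1+\ep)\ep^{-1}r_0$, so any fixed power of $|z_i|$ may be replaced by the same power of $r_0$ at the cost of a constant $C(\ep)$. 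In particular the origin is never in the ball, so all the power functions involved are smooth.

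For the conic term, Hölder continuity of $f$ together with $f(0)=0$ gives $|f(w_1)-f(w_2)|\le [\tilde f]_{C^\a}|w_1-w_2|^\a$. By Lemma (\ref{lem-1}) one has $|w_1-w_2|\le C(\ep) r_0^{\b-1}|z_1-z_2|$, hence $|w_1-w_2|^\a\le C(\ep) r_0^{\a(\b-1)}|z_1-z_2|^\a$. Since $2\b-2<0$, the comparability above yields $|z_1|^{2\b-2}\le C(\ep)r_0^{2\b-2}$. Combining these and dividing by $|z_1-z_2|^\a$ produces exactly the target bound $C(\ep)[\tilde f]_{C^\a}\,r_0^{2\b-2+\a(\b-1)}$ for the first term.

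For the radial term I would apply the mean value theorem to $t\mapsto t^{2\b-2}$, giving $\bigl||z_1|^{2\b-2}-|z_2|^{2\b-2}\bigr| = |2\b-2|\,\tilde t^{\,2\b-3}\,\bigl||z_1|-|z_2|\bigr|$ for some $\tilde t$ between $|z_1|$ and $|z_2|$; as $2\b-3<0$ and $\tilde t\ge c(\ep)r_0$, this is $\le C(\ep)r_0^{2\b-3}|z_1-z_2|$. To recover the Hölder exponent I would factor $|z_1-z_2| = |z_1-z_2|^\a\,|z_1-z_2|^{1-\a}$ and use $|z_1-z_2|^{1-\a}\le(2r_0)^{1-\a}$, valid on $B_{r_0}(z_0)$. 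Finally the pointwise bound $|f(w_2)|\le[\tilde f]_{C^\a}|z_2|^{\a\b}\le C(\ep)[\tilde f]_{C^\a}r_0^{\a\b}$ (here $\a\b>0$) multiplies in; collecting exponents gives $(2\b-3)+(1-\a)+\a\b = 2\b-2+\a(\b-1)$, matching the desired rate after dividing by $|z_1-z_2|^\a$.

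The computation is essentially bookkeeping of exponents, so the main obstacle is really the conic term, where one cannot naively control $w=|z|^{\b-1}z$ across different radii; this is precisely what Lemma (\ref{lem-1}) resolves, by providing the uniform Lipschitz comparison $|w_1-w_2|\le C(\ep)r_0^{\b-1}|z_1-z_2|$ on $B_{r_0}(z_0)$ once $\ep$ is small. With that in hand both pieces yield the same exponent $2\b-2+\a(\b-1)$, and the lemma follows by taking the supremum over $z_1,z_2\in B_{r_0}(z_0)$.
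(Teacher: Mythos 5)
Your proposal is correct and follows essentially the same route as the paper: the same splitting of $h(z_1)-h(z_2)$ into the conic part $|z_1|^{2\b-2}(f(w_1)-f(w_2))$ and the radial part $(|z_1|^{2\b-2}-|z_2|^{2\b-2})f(w_2)$, with Lemma (\ref{lem-1}) handling the first and the exponent bookkeeping $(2\b-3)+(1-\a)+\a\b=2\b-2+\a(\b-1)$ handling the second. You merely make explicit the steps the paper leaves implicit in its final ``the result follows for $r_0$ small enough.''
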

\begin{proof}
For any pair $z_1,z_2\in B_{r_0}(z_0)$
\begin{eqnarray}
|h(z_1) - h(z_2)| &\leq& r_1^{2\b -2} | f(r_1^{\b-1}z_1 ) - f(r_2^{\b-1}z_2) | 
\nonumber\\
&+& (r_1^{2\b -2} -r_2^{2\b-2}) |f(r_2^{\b-1}z_2)|
\nonumber\\
&\leq& [\tilde f]_{C^{\a}}  r_1^{2\b -2} | r_1^{\b-1}z_1  - r_2^{\b-1}z_2 |^{\a}   + C'(r_1^{2\b -2} -r_2^{2\b-2}) r_2^{\a\b}.
\end{eqnarray}
Thanks to lemma (\ref{lem-1}), we have 
$$ |h(z_1) - h(z_2)|  \leq C'' [\tilde f]_{C^\a}  r_0^{2\b-2 + \a(\b-1)} |z_1 -z_2|^{\a} + C''' r_0^{2\b -3 + \a\b} |z_1 - z_2|. $$
And the result follows for $r_0$ small enough.
\end{proof}

\subsection{$W^{1,2}$ estimate and H\"older continuity }
Let's compute the $L^2$ norm of $\nabla w$ as follows. 
\begin{equation}
\int_{B_1 - B_{\ep}} |\nabla w|^2 =  \int_{B_1 - B_{\ep}} w h + \int_{\d B_1} w \langle \nabla w, \hat{r}\rangle - \int_{\d B_{\ep}}w \langle \nabla w, \hat{r}\rangle.
\end{equation}
The first two terms in above equation are uniformly bounded. 
And the third term can be estimated by 
$$\int_{\d B_{\ep}}w \langle \nabla w, \hat{r}\rangle \leq C \int_{\d B_{\ep}} |\nabla w| \leq \int_{|z|=\ep} r^{-1} < K,$$
for some uniform constant $K$. This implies that $\nabla w$ is in $L^2(D^*)$.
Then we can say there exists an $L^2(D)$ function $g$ (vector valued) such that $\nabla w = g $ on $D$.
In other words, $w$ is the weak solution of the Laplacian equation.
Then the $L^p$ regularity estimate implies $w\in W^{2,p}$ for some $1< p <2$  ($w\in W^{1,1}$ is enough!).
And the Sobolev embedding implies that $w\in C^{\a'}$ around the origin.  

\subsection{Cauchy's integral formula}
Let's apply the Cauchy integral formula to the punctured ball $\Omega = B_{1}\backslash B_{\ep}$. 
\begin{eqnarray}
\label{green}
2\pi i \frac{\d w}{\d z}(z_0) &=& \int_{\Omega} \frac{h(z)}{z-z_0} dz\wedge d\bar{z} + \int_{\d B_1} \frac{\d w / \d z}{z-z_0}dz 
\nonumber\\
&-& \int_{\d B_{\ep}} \frac{\d w / \d z}{z-z_0}dz.
\end{eqnarray}
The second term is uniformly bounded. And we claim the third term converges to zero as $\ep$ does. First, we can assume $w(0) = 0$ after adjusting a constant to it. 
Then $|w(z)|\leq C |z|^{\a'}$ for some small $\a'$.
Now inequality (\ref{res3}) actually implies the follows. 
$$ | \nabla w |(z_0) \leq C r_0^{\gamma -1}, $$
where $\gamma = \min \{ \a', 2\beta + \a\b\}$. 
$$ \left |  \int_{\d B_{\ep}} \frac{\d w / \d z}{z-z_0}dz \right | \leq |z_0|^{-1} \int_{\d B_{\ep}} r^{\gamma -1},$$
which converges to zero. Therefore, the value of $\d w / \d\z (\z_0)$ is completely determined its Newtonian potential
Therefore, we have for all $z_0\in B_{1/4}$ 
\begin{eqnarray}
\label{app2-001}
\left| \d_{z} w (z_0) \right| &\leq& \left| \int_{\Omega} \frac{h(z)}{z - z_0} dz\wedge d\bar{z}  \right| + C
\nonumber\\
&\leq&  \int_{B_1} |z - z_0|^{-1} |h(z)| dz \wedge d\bar{z} + C
\nonumber\\
&\leq& [\tilde f]_{C^\a} \int_{B_1} |z - z_0|^{-1} |z|^{2\b-2+\a\b} dz\wedge d\bar z + C
\nonumber\\
&\leq& C[\tilde f]_{C^{\a}} |z_0|^{2\b -1 + \a\b},
\end{eqnarray}
where we used the condition $ 0 < \a\b < 1- 2\b$ on the last line. Consequently, 
\begin{equation}
\label{app2-002}
|z|^{1-\b} | \d_z w | \leq C [\tilde f]_{C^{\a}} |z|^{\a\b},
\end{equation}
for all $z\in B_{1/4}$. 
 
Now put $v(z) = w(z_0 + r_0 z)$ again, and use the Schauder estimate for $\dbar$ operator (acting on functions) 
\begin{equation}
\label{app2-interior2}
|\d_z \d_z v| (0) \leq  ||  \d_z v ||_{C^{1,\a} (B_{1/2})} \leq C ( || \Delta v ||_{C^{,\a} (B_1)} + || v ||_{C^0(B_1)}).
\end{equation}
This implies 
\begin{eqnarray}
\label{app2-003}
|\d_z \d_z w |(z_0) &\leq& C |z_0|^{\a} [ h ]_{C^\a(B_{r_0}(z_0))}  + C' \sup_{B_{r_0}(z_0)} |h|+ C''|z_0|^{-1} \sup_{B_{r_0}(z_0)} |\d_z w| 
\nonumber\\
&\leq& C [\tilde f]_{C^\a} |z_0|^{2\b -2 + \a\b}. 
\end{eqnarray}

Combine equation (\ref{app2-002}) and (\ref{app2-003}), we reproduce Brendle's result 
\begin{equation}
\label{app2-brendle}
 |z|^{2-2\b} (\d_z \d_z w + (1-\b)z^{-1} w )  \leq  C [\tilde f] _{C^{,\a}} |z|^{\a\b}.
\end{equation}

\subsection{Outside the origin}
Take $\xi: = |z|^{\b-1}z$, and the H\"older distance function in the so called $w$-coordinate is denoted by 
$$ d (z_0, z_1):  = \frac{ | F(\xi_0 ) - F(\xi_1)|}{ | \xi_0 - \xi_1|^{\a}}. $$
We have seen the estimate for $d(z_0, 0)$ from equation (\ref{app2-brendle}), 
and then it is enough to consider the case when $z_0$ and $z_1$ are comparable, in the sense that $z_1$ is always in the ball $B_{r_0}(z_0)$. 
( If $z_0$ and $z_1$ are not comparable, then the estimate follows from the continuity of the H\"older distance function and triangle inequality. )

Put $I: =  || f ||_{\cC^{,\a,\b}}$, and we have the Cauchy integral formula again 
\begin{eqnarray}
\label{app2-green}
| \d_z w(z_0) - \d_z w(z_1)| &\leq& CI  |z_0 - z_1| \left|  \int_{B_1} \frac{|z|^{2\b -2 + \a\b}} { |z-z_0| |z - z_1|}dz \wedge d\bar z \right| + C
\nonumber\\
&\leq & CI r_0^{2\b - 2 + \a\b} |z_0 - z_1|. 
\end{eqnarray}
for all $z_1 \in B_{r_0}(z_0)$ and $z_0 \in B_{1/2}$. This implies the following 
\begin{eqnarray}
\label{app2-onederi}
| |z_1|^{1-2\b} \d_z w(z_1) - |z_0|^{1-2\b} \d_z w(z_0) | &\leq &  |z_0|^{1-2\b} | \d_z w(z_0) - \d_z w(z_1)  | 
\nonumber\\
& + & |\d_z w(z_1)| | |z_1|^{1-2\b} - |z_0|^{1-2\b}|
\nonumber\\
&\leq& C'I r_0^{-1+ \a\b} |z_0 - z_1| + C'' I | | \xi_1|^{\frac{1}{\b}-2} - |\xi_0|^{\frac{1}{\b}-2}|
\nonumber\\
&\leq& C''' I  |\xi_0 - \xi_1|^{\a}.  
\end{eqnarray}
The last inequality follows from Lemma (\ref{lem-distance}) and the fact 
$$r_0^{\a -1} |z_0 - z_1|^{1-\a} < 1$$ for all $z_1\in B_{r_0}(z_0)$. 

The interior estimate (equation \ref{app2-interior2}) also implies the following 
\begin{eqnarray}
\label{app2-secderi}
  r_0^{\a}[ \nabla \d_z w ]_{C^{\a}(B_{r_0}(z_0))} 
 &\leq& C r_0^{\a}[ h ]_{C^{\a}(B_{2r_0}(z_0))} +
 \sup_{B_{2r_0}(z_0)} |h|+  C' r_0^{-1} \sup_{B_{2r_0}} | \d_z w |
\nonumber\\
&\leq&  C'' I r_0^{2\b -2 + \a\b - \a}. 
\end{eqnarray}
Hence we can estimate it as 
\begin{eqnarray}
\label{app2-secderi2}
\frac{ |\d^2_z w(z_1) - \d^2_z w(z_0) |}{ | \xi_1 - \xi_0|^{\a}} &\leq& r_0^{\a-\a\b} [ \d^2_z w(\z) ]_{C^{\a}(B_{r_0} (z_0))}
\nonumber\\
&\leq& C I r_0^{2\b -2}, 
\end{eqnarray}
where we used the inequality 
$$ |z_0 - z_1| \leq C r_0^{1-\b}|\xi_0 - \xi_1|.  $$
Moreover, we have 
\begin{eqnarray}
\label{app2-final}
| |z_0|^{2-2\b} \d^2_z w(z_0) - |z_1|^{2-2\b} \d^2_z w(z_1) | &\leq& |z_0|^{2-2\b} |\d^2_z w(z_0) - \d^2_z w(z_1) |
\nonumber\\
&+& | \d^2_z w(z_1)| | |z_0|^{2-2\b} - |z_1|^{2-2\b} |. 
\nonumber\\
&\leq & CI |\xi_1 - \xi_0|^{\a}. 
\end{eqnarray}
The last inequality follows from the mean value equation 
$$  x^{2-2\b} - y^{2-2\b} = (2-2\b) \tilde{x}^{1-2\b} (x-y),$$
for some $\tilde x \in [y, x]$. 

\begin{proof}[Proof of Proposition (\ref{brendle})]
Combine inequality (\ref{app2-brendle}) and (\ref{app2-final}), and the result follows. 
\end{proof}

\begin{bibdiv}
\begin{biblist}

\bib{Br}{article}{
   author={Brendle, Simon},
   title={Ricci flat K\"ahler metrics with edge singularities},
   journal={Int. Math. Res. Not. IMRN},
   date={2013},
   number={24},
   pages={5727--5766},
}

\bib{BM}{article}{
   author={Bando, Shigetoshi},
   author={Mabuchi, Toshiki},
   title={Uniqueness of Einstein K\"ahler metrics modulo connected group
   actions},
   conference={
      title={Algebraic geometry, Sendai, 1985},
   },
   book={
      series={Adv. Stud. Pure Math.},
      volume={10},
      publisher={North-Holland, Amsterdam},
   },
   date={1987},
   pages={11--40},
}

\bib{BB}{article}{
   author={Berman, Robert},
   author={Berndtsson, Bo},
   title={Convexity of the K-energy on the space of K\"ahler metrics},
   journal={arXiv:1405.0401},
}

\bib{Bo}{article}{
   author={Berndtsson, Bo},
   title={A Brunn-Minkowski type inequality for Fano manifolds and some
   uniqueness theorems in K\"ahler geometry},
   journal={Invent. Math.},
   volume={200},
   date={2015},
   number={1},
   pages={149--200},
}

\bib{BBEGZ}{article}{
   author={Berman, Robert},
   author={Boucksom, S\'ebastien},
   author={Eyssidieux, Philippe},
   author={Guedj, Vincent},
   author={Zeriahi, Ahmed},
   title={K\"ahler-Einstein metrics and the K\"ahler-Ricci flow on log Fano varieties},
   journal={arXiv:1111.7158},
}

\bib{CZ}{article}{
   author={Calamai, Simone},
   author={Zheng, Kai}
   title={Geodesics in the space of K\"ahler cone metrics I,},
   journal={American Journal of Mathematics},
   volume={137},
   date={2015},
   number={5},
   pages={1149-1208}
}

\bib{Chen}{article}{
   author={Chen, Xiuxiong},
   title={On the existence of constant scalar curvature K\"ahler metric: a new perspective},
   journal= {arXiv:1506.06423}
}

\bib{CLP}{article}{
   author={Chen, Xiuxiong},
   author={Li, Long }
   author={P\u aun, Mihai },  
   title={Approximation of weak geodesics and subharmonicity of Mabuchi energy,},
   journal={arXiv:1409.7896.},
}

\bib{CPZ}{article}{
   author={Chen, Xiuxiong},
   author={P\u aun, Mihai },
   author={Zeng, Yu},
   title={On deformation of extremal metrics },
   journal={arXiv:1506.01290},
}

\bib{CW}{article}{
   author={Chen, Xiuxiong},
   author={Yuanqi, Wang },
   title={On the regularity problem of complex Monge-Amp\`ere equations with conical singularities},
   journal={arXiv: 1405.1021},
}

\bib{Don12}{article}{
   author={Donaldson,S.},
   title={K\"ahler metrics with cone singularities along a divisor},
   journal={Essays in Mathematics and its Applications},
   date={2012}
   pages={49-79}
}

\bib{Fut1}{article}{
   author={Futaki, A.},
   author={Honda, S.},
   author={Saito, S.}
   title={Fano-Ricci limit spaces and spectral convergence},
   journal={arXiv: 1509.03862},
}

\bib{Fut}{book}{
   author={Futaki, A.},
   title={K\"ahler-Einstein Metrics and Integral Invariants},
   edition={illustrated},
   publisher={Springer},
   date={2014}
   length={148}
}

\bib{GT}{book}{
   author={Gilbarg, David},
   author={Trudinger, Neil S.},
   title={Elliptic partial differential equations of second order},
   series={Classics in Mathematics},
   note={Reprint of the 1998 edition},
   publisher={Springer-Verlag, Berlin},
   date={2001},
   pages={xiv+517},
}

\bib{Has}{article}{
   author={Y. Hashimoto},
   title={Scalar curvature and Futaki invariant of K\"ahler metrics with cone singularities along a divisor},
   journal={arXiv:1508.02640},
   date={}
   pages={}
}

\bib{Li}{article}{
   author={Li, Long},
   title={Subharmonicity of conic Mabuchi's functional, I },
   journal={arXiv:1511.00178},
}

\bib{LZ}{article}{
   author={Li, Long},
   author={Zheng, Kai}
   title={Generalized Matsushima's theorem and K\"ahler-Einstein cone metrics},
   journal={arXiv:1511.02410},
}

\bib{LZ2}{article}{
   author={Li, Long},
   author={Zheng, Kai}
   title={Uniqueness of conic constant scalar curvature K\"ahler metrics, II: Bifurcation },
   journal={Preprint},
}

\bib{Zheng}{article}{
   author={Zheng, Kai},
   title={K\"ahler metrics with cone singularities and uniqueness problem},
   conference={
      title={Proceedings of the 9th ISAAC Congress, Krak\'ow 2013},
   },
   book={
   title={Current Trends in Analysis and its Applications},
      series={Trends in Mathematics},
      publisher={Springer International Publishing},
   },
   date={2015},
   pages={395-408},
}

\end{biblist}
\end{bibdiv}

\end{document}